\theoremstyle{plain}
\newtheorem{thm}{Theorem}[section]
\newtheorem{prop}[thm]{Proposition}
\newtheorem{lem}[thm]{Lemma}
\newtheorem*{thm*}{Theorem}
\newtheorem*{conj*}{Conjecture}
\newtheorem*{claim*}{Claim}
\newtheorem*{prop*}{Proposition}
\newtheoremstyle{narrow}
  {.5em} 
  {.5em} 
  {\itshape} 
  {} 
  {\bfseries} 
  {.} 
  {.5em} 
  {} 
\theoremstyle{narrow}
\newtheorem*{thn*}{Theorem}
\newtheorem*{conjn*}{Conjecture}
\theoremstyle{definition}
\newtheorem{defi}[thm]{Definition}
\newtheorem{conv}[thm]{Convention}
\newtheorem*{nota*}{Notation}
\newtheorem{rem}[thm]{Remark}
\newcommand{\Q}{\mathbb{Q}}
\newcommand{\Z}{\mathbb{Z}}
\newcommand{\Ns}{\mathbb{Z}_{>0}}
\newcommand{\N}{\mathbb{Z}_{\geq0}}
\newcommand{\C}{\mathbb{C}}
\newcommand{\R}{\mathbb{R}}
\renewcommand{\H}{\mathbb{H}}
\newcommand{\tr}{\operatorname{tr}}
\renewcommand{\i}{\mathrm{i}}
\newcommand{\e}{\mathrm{e}}
\newcommand{\End}{\operatorname{End}}
\newcommand{\Aut}{\operatorname{Aut}}
\newcommand{\Hom}{\operatorname{Hom}}
\newcommand{\rk}{\operatorname{rk}}
\newcommand{\sign}{\operatorname{sign}}
\newcommand{\voa}{vertex operator algebra}
\newcommand{\VOA}{Vertex Operator Algebra}
\newcommand{\vosa}{vertex operator subalgebra}
\newcommand{\VOSA}{Vertex Operator Subalgebra}
\newcommand{\svoa}{vertex operator superalgebra}
\newcommand{\SVOA}{Vertex Operator Superalgebra}
\newcommand{\fpvosa}{fixed-point vertex operator subalgebra}
\newcommand{\vac}{\textbf{1}}
\newcommand{\ch}{\operatorname{ch}}
\newcommand{\Ch}{\operatorname{Ch}}
\newcommand{\id}{\operatorname{id}}
\newcommand{\SLZ}{\operatorname{SL}_2(\mathbb{Z})}
\newcommand{\ee}{\mathfrak{e}}
\newcommand{\g}{\mathfrak{g}}
\newcommand{\hh}{\mathcal{H}}
\newcommand{\Out}{\operatorname{Out}}
\newcommand{\so}{\mathfrak{so}}
\newcommand{\Irr}{\operatorname{Irr}}
\newcommand{\orb}{\operatorname{orb}}
\newcommand{\strat}{strongly rational}
\newcommand{\II}{I\!I}
\renewcommand{\O}{\operatorname{O}}
\newcommand{\Co}{\operatorname{Co}}
\newcommand{\Com}{\operatorname{Com}}
\newcommand{\no}{\,{\raise0.25em\hbox{$\mathop{\hphantom{\cdot}}\limits^{_{\circ}}_{^{\circ}}$}}\,}
\newcommand{\Rep}{\operatorname{Rep}}
\newcommand{\glob}{\operatorname{glob}}
\newcommand{\diag}{\operatorname{diag}}
\newcommand{\shs}{\mkern 1mu}
\newcommand{\ph}{\phantom{\shs\sfrac{1}{2}}}
\newcommand{\phh}{\phantom{\sfrac{1}{2}}}
\newcommand{\VB}{V\!B^\natural} 
\newcommand{\VO}{V\!O^\natural} 
\newcommand{\Vect}{\operatorname{Vect}_\mathbb{C}}
\newcommand{\sVect}{\operatorname{sVect}_\mathbb{C}}
\newcommand{\sAA}{1^{24}}
\newcommand{\sBB}{1^82^8}
\newcommand{\sCC}{1^63^6}
\newcommand{\sDD}{2^{12}}
\newcommand{\sEE}{1^42^24^4}
\newcommand{\sFF}{1^45^4}
\newcommand{\sGG}{1^22^23^26^2}
\newcommand{\sHH}{1^37^3}
\newcommand{\sII}{1^22^14^18^2}
\newcommand{\sJJ}{2^36^3}
\newcommand{\sKK}{2^210^2}
\newcommand{\gAA}{\II_{24,0}(1)}
\newcommand{\gBB}{\II_{16,0}(2_{\II}^{+10})}
\newcommand{\gCC}{\II_{12,0}(3^{-8})}
\newcommand{\gDD}{\II_{12,0}(2_{\II}^{-10}4_{\II}^{-2})}
\newcommand{\gEE}{\II_{10,0}(2_{2}^{+2}4_{\II}^{+6})}
\newcommand{\gFF}{\II_{8,0}(5^{+6})}
\newcommand{\gGG}{\II_{8,0}(2_{\II}^{+6}3^{-6})}
\newcommand{\gHH}{\II_{6,0}(7^{-5})}
\newcommand{\gII}{\II_{6,0}(2_{5}^{-1}4_{1}^{+1}8_{\II}^{-4})}
\newcommand{\gJJ}{\II_{6,0}(2_{\II}^{+4}4_{\II}^{-2}3^{+5})}
\newcommand{\gKK}{\II_{4,0}(2_{\II}^{-2}4_{\II}^{-2}5^{+4})}
\newcommand{\gLL}{\II_{0,0}(1)}
\renewcommand{\arraystretch}{1.15}
\begin{document}

\title[Classification of Self-Dual Vertex Operator Superalgebras]{Classification of Self-Dual\\Vertex Operator Superalgebras\\of Central Charge at Most 24}
\author[Gerald Höhn and Sven Möller]{Gerald Höhn\textsuperscript{\lowercase{a}} and Sven Möller\textsuperscript{\lowercase{b},\lowercase{c}}}
\thanks{\textsuperscript{a}{Kansas State University, Manhattan, KS, United States of America}}
\thanks{\textsuperscript{b}{Research Institute for Mathematical Sciences, Kyoto University, Kyoto, Japan}}
\thanks{\textsuperscript{c}{Universität Hamburg, Hamburg, Germany}}
\thanks{Email: \href{mailto:gerald@monstrous-moonshine.de}{\nolinkurl{gerald@monstrous-moonshine.de}}, \href{mailto:math@moeller-sven.de}{\nolinkurl{math@moeller-sven.de}}}

\begin{abstract}
We classify the self-dual (or holomorphic) \svoa{}s of central charge $24$, or in physics parlance the purely left-moving, fermionic $2$-dimensional conformal field theories with just one primary field.

There are exactly $969$ such \svoa{}s under suitable regularity assumptions (essentially strong rationality) and the assumption that the shorter moonshine module $\VB$ is the unique self-dual \svoa{} of central charge~$23\shs\sfrac{1}{2}$ whose weight-$\sfrac{1}{2}$ and weight-$1$ spaces vanish. Additionally, there might be self-dual \svoa{}s arising as fake copies of $\VB$ tensored with a free fermion $F$.

We construct and classify the self-dual \svoa{}s by determining the $2$-neighbourhood graph of the self-dual \voa{}s of central charge $24$ and also by realising them as simple-current extensions of a dual pair containing a certain maximal lattice \voa{}. We show that all \svoa{}s besides $\VB\otimes F$ and potential fake copies thereof stem from elements of the Conway group $\Co_0$, the automorphism group of the Leech lattice $\Lambda$.

By splitting off free fermions $F$, if possible, we obtain the classification for all central charges less than or equal to $24$.
\end{abstract}

\vspace*{-10pt}
\maketitle

\vspace*{-10pt}
\setcounter{tocdepth}{1}
\tableofcontents
\setcounter{tocdepth}{2}


\vspace*{-10pt}
\section{Introduction}

Vertex operator (super)algebras and their representations (or modules) axiomatise the notions of chiral algebras and fusion rings of $2$-dimensional conformal field theories in physics. While \voa{}s only have bosonic (or even) fields, \svoa{}s may also possess fermionic (or odd) fields. If the fusion ring is trivial, then the vertex operator (super)algebra is said to be \emph{self-dual} (or holomorphic or meromorphic). In physical language, this means that the conformal field theory has exactly one primary field.

\medskip

In this text, we classify the self-dual \svoa{}s of central charge $24$. By splitting off free fermions, this also yields a classification for all central charges less than $24$. Throughout, we assume that the \svoa{}s satisfy a certain regularity assumption (strong rationality and positive weight grading of the irreducible untwisted or canonically twisted modules other than the algebra itself) and call them \emph{nice}.

There are exactly $70$ non-isomorphic nice, self-dual (purely even) \voa{}s $V$ of central charge $24$ assuming that the weight-$1$ Lie algebra $V_1$ is non-zero. These \voa{}s were originally found by Schellekens \cite{Sch93} and his results were recently made rigorous (see, e.g., \cite{Hoe17,MS23,MS21,HM22,LM23,DSW23}). Each such \voa{} is up to isomorphism uniquely determined by $V_1$. In addition, there is the famous moonshine module $V^\natural$ \cite{FLM88} with $V^\natural_1=\{0\}$, but it is not known whether there are further (or \emph{fake}) nice, self-dual \voa{}s $V$ of central charge $24$ with $V_1=\{0\}$.

The situation for \svoa{}s is richer. While the central charge of a nice, self-dual \voa{} is a non-negative multiple of~$8$ and the first non-lattice theories occur in central charge $24$, the analogous \svoa{}s have half-integer central charges with the first non-lattice theory appearing in central charge $15\shs\sfrac{1}{2}$. A classification up to this central charge was given in \cite{Hoe95} contingent on then unproven representation-theoretic results. The method developed there uses commutants of the simple affine \voa{}s for the Lie algebras $\so_l$ at level $1$ inside the Schellekens \voa{}s and, in principle, facilitates a classification up to central charge~$23\shs\sfrac{1}{2}$. Moreover, the classification up to central charge $16$ was known to physicists~\cite{KLT86}, and partial results in central charge~$24$ were given in \cite{Mon98}. Here, we finally give a rigorous classification up to central charge $24$ with the exception of the possible existence of \emph{fake} copies (see \autoref{sec:results} for a definition) of $\VB$ and $\VB\otimes F$ where $\VB$ is the shorter moonshine module of central charge~$23\shs\sfrac{1}{2}$ constructed in \cite{Hoe95} and $F$ the (Clifford) free-fermion \voa{} of central charge~$\sfrac{1}{2}$.

\medskip

The classification of the nice, self-dual vertex operator (super)algebras generalises the classification of the positive-definite, integral, unimodular lattices. Indeed, each such lattice defines a unique lattice theory, which is even whenever the lattice is and has central charge equal to the rank of the lattice. The rank is a non-negative multiple of $8$ in the even case and a positive integer in the odd case.

Niemeier classified the positive-definite, even, unimodular lattices of rank~$24$ \cite{Nie68,Nie73}, among them the famous Leech lattice $\Lambda$, which gives the densest sphere packing in $24$ dimensions \cite{CKMRV17}. There are exactly $24$ such lattices, and that there are so few can be attributed to the fact that they are in a sense governed by said Leech lattice \cite{CPS82,Bor85}. In higher ranks, the numbers grow very rapidly and no complete classifications are known. For example, there are over a billion lattices in the next higher rank~$32$ \cite{Kin03}.

The exactly $273$ and $665$ positive-definite, odd, unimodular lattices of rank $24$ and $25$, respectively, were classified by Borcherds \cite{Bor85}. Once again, they are controlled by the Leech lattice $\Lambda$ and beyond these ranks their numbers increase rapidly. In rank~$24$ (or any multiple of $8$) these lattices are in bijection with neighbouring pairs (Kneser $2$-neighbours) of their even counterparts, the Niemeier lattices. The corresponding $2$-neighbourhood graph was also determined in~\cite{Bor85}.

\medskip

In this text, we generalise the neighbourhood method to nice, self-dual \svoa{}s of central charge $c\in8\Ns$ (see \autoref{prop:neighbourhood}) and classify them for $c=24$ by enumerating neighbouring pairs ($\Z_2$-orbifold constructions) of nice, self-dual \voa{}s, i.e.\ the Schellekens \voa{}s. To this end, we explicitly realise the automorphism groups of the latter \cite{BLS23}.

We find $969$ such \svoa{}s, in addition to the $71$ (purely even) Schellekens \voa{}s:
\begin{thn*}[Classification, \autoref{thm:class}]
Up to isomorphism, there are exactly 968 nice, self-dual \svoa{}s of central charge~24 for which the weight-1 space or that of the canonically twisted module is non-zero.

In addition, there is the nice, self-dual \svoa{} $\VB\otimes F$ of central charge 24 obtained by tensoring the shorter moonshine module with a free fermion.
\end{thn*}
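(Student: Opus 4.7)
The plan is to establish a correspondence between nice, self-dual SVOAs of central charge $24$ and pairs of ``$2$-neighbour'' nice self-dual VOAs of the same central charge, and then classify via the known list of $71$ Schellekens VOAs.

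Given a nice self-dual SVOA $W$ of central charge $24$, its canonical parity decomposition $W = W^+ \oplus W^-$ yields a strongly rational VOA $W^+$ whose irreducible modules are precisely $W^+, W^-, (W_{tw})^+, (W_{tw})^-$, with fusion group $\Z_2 \times \Z_2$ (the central charge being integral). Hence $W^+$ has three self-dual simple-current extensions: the SVOA $W$ itself and two nice self-dual VOAs
$$V := W^+ \oplus (W_{tw})^+, \qquad V' := W^+ \oplus (W_{tw})^-.$$
Each of $V, V'$ carries an involution $\tau, \tau'$ with fixed-point subalgebra $W^+$, and the $\Z_2$-orbifold $V/\!/\tau$ recovers $W$. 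Conversely, any pair of nice self-dual VOAs sharing such a $\Z_2$-fixed-point subVOA glues back to a nice self-dual SVOA. Under the hypothesis that $W_1$ or $(W_{tw})_1$ is non-zero, at least one of $V_1, V'_1$ is non-zero, so both $V$ and $V'$ appear in Schellekens' list.

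The second step is to enumerate the edges of the resulting $2$-neighbourhood graph on the $71$ Schellekens vertices. For each Schellekens VOA $V$, the explicit description of $\Aut(V)$ from \cite{BLS22} together with the references compiled in the introduction gives the conjugacy classes of order-$2$ automorphisms; for each such class one checks whether $V/\!/\tau$ is again a nice self-dual VOA (equivalently, that the $\tau$-twisted module has positive conformal weights outside the vacuum sector) and records the target vertex. Two orbifold pairs $(V,\tau)$ and $(V'',\tau'')$ yield isomorphic SVOAs precisely when some VOA isomorphism $V \to V''$ conjugates $\tau$ to $\tau''$. Tallying $\Aut$-orbits on the edge set yields exactly $968$ isomorphism classes. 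A consistency check is provided by the alternative realisation mentioned in the introduction: presenting each SVOA as a simple-current extension of a dual pair built on a fixed maximal lattice VOA, reducing the enumeration to modular data over a finite discriminant form.

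Finally, the exotic case $W_1 = 0 = (W_{tw})_1$ (together with the corresponding weight-$\sfrac12$ vanishing) forces both bosonisations $V, V'$ to have trivial weight-$1$ subspace; by the stated hypothesis on $\VB$ in central charge $23\shs\sfrac12$ and splitting off a free fermion, the only such SVOA up to fake copies is $\VB \otimes F$. The main obstacle is Step two: controlling the $\Aut(V)$-orbit structure on involution classes uniformly across all $71$ Schellekens VOAs, so that edges are counted with correct multiplicity and distinct pairs genuinely produce non-isomorphic SVOAs. This is where the detailed automorphism-group results of \cite{BLS22} are essential, and where the dual-pair/discriminant-form construction provides an independent verification of the enumeration.
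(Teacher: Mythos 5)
Your Step 1 (bosonisation of the even part, the three simple-current extensions of $W^{\bar0}$, and the resulting $2$-neighbourhood correspondence) is exactly the paper's framework, and your Step 3 for the exotic case matches the paper's treatment of $\VB\otimes F$. The genuine gap is in Step 2. You assert that the description of $\Aut(V)$ from \cite{BLS22} ``gives the conjugacy classes of order-$2$ automorphisms'' of each Schellekens \voa{}, and that one then simply tallies $\Aut$-orbits of edges to reach $968$. Knowing $\Aut(V)$ as a group does not give this: one needs the conjugacy classes of involutions of $V$ itself, whether each has type~$0$ and positive twisted conformal weight, whether non-conjugate involutions can have isomorphic fixed-point subalgebras (so that an edge is counted once, not twice), and whether an involution is conjugate to its inverse-orbifold automorphism (the 2-to-1 versus injective dichotomy of \autoref{prop:order2conj}). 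The paper can settle all of this directly only for \emph{inner} automorphisms, via \autoref{prop:classesinner} (orbits of $\O(L)_V$ on $\frac12L/L$), which covers glueing types I, IIa, IIb. For non-inner involutions (type III) there is no such parametrisation: the paper controls only the restriction $g|_{V_L}=\hat\nu$ to the lattice part (\autoref{prop:autconj}, \autoref{prop:classesstandard}), and a given $\hat\nu$ may extend to zero, one, or several non-conjugate involutions of $V$, of possibly larger order or wrong type. This produces only a list of \emph{potential} edges, containing $18$ spurious entries and with a priori unknown multiplicities.

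Eliminating the spurious edges and proving that each surviving quintuple of invariants corresponds to exactly one \svoa{} is where most of the work lies, and it is precisely the ``consistency check'' you treat as optional. The paper first proves \autoref{thm:commconway} (the Heisenberg commutant of $V^g$ is $V_{\Lambda_\mu}^{\hat\mu}$ for one of $18$ classes $\mu\in\Co_0$, plus the type-T case), which in particular kills the hypothetical commutant $\Xi$ with representation category $\mathcal{C}(2_{\II}^{+6}4_{\II}^{+4})$; it then enumerates the inequivalent simple-current extensions of $V_{\Lambda_\mu}^{\hat\mu}\otimes V_K$ with representation category $\mathcal{C}(2_{\II}^{+2})$, which requires the vector-valued characters and the groups $\overline{\Aut}(W)$ for the \emph{new} commutants M--R (data not in \cite{BLS22}; for O and R it was communicated by Lam--Shimakura, and for P and Q it is incomplete), and finally handles types P and Q by a separate fixed-point-Lie-algebra argument in the style of \cite{EMS20b}. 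The same extension analysis is also what resolves the one type-I ambiguity (the two non-conjugate involutions of the \voa{} with $V_1\cong A_{1,1}^2D_{6,5}$) and decides the loop multiplicities. Without these ingredients your tally of $968$ is not justified; with them, your outline becomes the paper's proof.
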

We cannot exclude the possibility that there are fake copies of $\VB\otimes F$, analogously to the moonshine uniqueness problem in the bosonic case.

We list the nice, self-dual \svoa{}s of central charge~$24$ in \autoref{table:969} and give a summary in \autoref{table:genera}. Splitting off free fermions (see \autoref{prop:holsplit}), we also obtain a classification in all central charges less than~$24$. The corresponding numbers of non-isomorphic theories are given in \autoref{table:numbers}.

In contrast to the Schellekens \voa{}s, the nice, self-dual \svoa{}s of central charge $24$ are in general not uniquely determined by their (non-zero) weight-$1$ Lie algebras.

\medskip

The central idea in the classification of the Schellekens \voa{}s in \cite{Hoe17,MS23} is that they, similarly to the Niemeier lattices, almost all come from the Leech lattice \voa{} $V_\Lambda$, now together with an element in the Conway group $\Co_0=\O(\Lambda)$. In this text, we prove the same statement for the nice, self-dual \svoa{}s of central charge $24$:
\begin{thn*}[Heisenberg Commutants, \autoref{thm:commconway}]
Let $V$ be a nice, self-dual \svoa{} of central charge 24. If the weight-1 space or that of the canonically twisted module is non-zero, then $V$ is a simple-current extension of the dual pair $V_{\Lambda_\mu}^{\hat\mu}\otimes V_L$, where $\Lambda$ is the Leech lattice, $\mu$ one of 18 conjugacy classes in $\Co_0$ and $L$ a certain positive-definite, even lattice of rank $\rk(\Lambda^\mu)$.
\end{thn*}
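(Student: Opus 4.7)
The plan is to adapt the Heisenberg commutant strategy of \cite{MS21,HM22}, which established the analogous statement for self-dual \voa{}s of central charge $24$, to the fermionic setting via the $\Z_2$-orbifold correspondence between self-dual \svoa{}s and self-dual \voa{}s of central charge $24$ exploited throughout this paper. The three main moves are: first, extract a maximal Heisenberg; second, identify its commutant as a Conway fixed-point algebra; third, enumerate which conjugacy classes of $\Co_0$ actually occur.

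For the first step, if $V_1\ne\{0\}$ the weight-$1$ Lie algebra is reductive, and its Cartan subalgebra generates a Heisenberg \vosa{} $H$ of rank $r=\rk(V_1)$ inside the even subalgebra $V^0$. If instead $V_1=\{0\}$ but the canonically twisted module has non-vanishing weight-$\sfrac{1}{2}$ space, I would first pass via a $\Z_2$-orbifold to a self-dual \voa{} $W$ of central charge $24$ with $W_1\ne\{0\}$ and then work with the Heisenberg arising from $W_1$. In either case, a suitable maximal $H$ can be enlarged to an even lattice \vosa{} $V_L\subset V$ of rank $r$, and strong rationality forces $V$ to be a simple-current extension of the dual pair $V_L\otimes\Com(V_L,V)$.

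For the second step, the goal is to identify $\Com(V_L,V)$ with a fixed-point subalgebra $V_{\Lambda_\mu}^{\hat\mu}$ and to match $r=\rk(\Lambda^\mu)$. The key input is the bosonic theorem: the self-dual \voa{} $W$ of central charge $24$ related to $V$ by the $\Z_2$-orbifold is, by \cite{MS21,HM22}, already a simple-current extension of a dual pair of the form $V_M\otimes V_{\Lambda_\nu}^{\hat\nu}$ for some $\nu\in\Co_0$. Since the $\Z_2$-orbifold preserves the common even subalgebra $V^0=W^0$, the Heisenberg commutant data transfers from $W$ to $V$; tracking the orthogonal splitting of the Heisenberg rank pins down the pair $(L,\mu)$ up to $\Co_0$-conjugation.

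The hard part will be verifying that \emph{exactly} $18$ conjugacy classes of $\Co_0$ occur. In the bosonic case only $11$ classes appear; the additional fermionic classes come from orbifold automorphisms whose frame shape or obstruction cocycle forces the $\Z_2$-twisted sector to contain genuinely odd fields, yielding a true \svoa{} rather than another \voa{}. The plan is to go through the candidate conjugacy classes of $\Co_0$ one by one, determining for each $\nu$ whether the simple-current extension data on $V_M\otimes V_{\Lambda_\nu}^{\hat\nu}$ admits a fermionic extension, and to cross-check the resulting list against the relevant entries of \autoref{table:969} produced independently from the $2$-neighbourhood graph construction of \autoref{thm:class}.
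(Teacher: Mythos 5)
Your reduction to the bosonic theorem only works for the glueing types in which at least one of the two orbifold automorphisms is inner (types I, IIa, IIb in the paper's terminology): there the Cartan subalgebra of a neighbour $W$ survives into the fixed points, so the Heisenberg commutant of the common even part $U=V^{\bar0}=W^g$ coincides with that of $W$, and the bosonic result of \cite{Hoe17,Lam20} indeed hands you one of the $11$ classes A--K. The gap is your second step in the remaining case, where both neighbours arise from non-inner automorphisms (type III). There the associated lattice of $U$ is $K=L^{\nu}$ for a non-trivial outer class $\nu$, and the Heisenberg commutant $\Com_U(V_K)$ \emph{strictly contains} the commutant $V_{\Lambda_\nu}^{\hat\nu}$ of $W$; nothing ``transfers'', and these are precisely the cases producing the seven new classes M--S, which do not occur in the bosonic theorem, so no tracking of Heisenberg ranks through the orbifold can pin them down from the bosonic data. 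One must \emph{prove}, not assume, that this larger commutant is again of Conway type. The paper does this by a genuinely separate argument: the possible type-III fixed-point algebras are first bounded by the orbifold enumeration; for each candidate one finds, via the classification of the algebras $V_{\Lambda_\mu}^{\hat\mu}$ by central charge and pointed representation category \cite{Lam20}, a unique matching $\mu\in\Co_0$, enumerates all extensions of $V_{\Lambda_\mu}^{\hat\mu}\otimes V_K$ with representation category $\mathcal{C}(2_{\II}^{+2})$, and uses that the Heisenberg commutant of $V^g$ depends only on $V$ and the outer class of $g$ (\autoref{prop:asslatouter}) to force the identification; the single candidate for which no $\mu$ exists (central charge $16$, category $\mathcal{C}(2_{\II}^{+6}4_{\II}^{+4})$) must then be excluded by a separate genus-completeness argument. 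None of this is present in your outline.

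Your third step has a further logical problem: checking, class by class in $\Co_0$, which $\nu$ admit fermionic simple-current extensions only shows which $\mu$ are \emph{realised}; it does not show that every nice, self-dual \svoa{} satisfying the hypothesis arises this way, which is the actual content of the theorem. Moreover, cross-checking against \autoref{table:969} is circular, since that classification (\autoref{thm:class}) is derived in the later steps using precisely this Heisenberg-commutant theorem; the only admissible external input is the orbifold enumeration giving an upper bound on type-III edges. Two smaller slips: the hypothesis concerns the weight-$1$ space of the canonically twisted module (which is integrally graded at $c=24$), not its weight-$\sfrac{1}{2}$ space; and when $V_1=\{0\}$, passing to a neighbour with non-zero weight-$1$ space is legitimate but lands you exactly in the type-III situation (the $\VO$/class-S case), outside the reach of your transfer argument.
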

Here, $\Lambda_\mu=(\Lambda^\mu)^\bot$ denotes the coinvariant lattice, the orthogonal complement of the fixed-point lattice $\Lambda^\mu$. Further, $L$ is a certain maximal lattice we call the \emph{associated lattice}, and $V_{\Lambda_\mu}^{\hat\mu}$ is the corresponding \emph{Heisenberg commutant} (see \autoref{sec:cartan}).

We label the $18$ Heisenberg commutants (or the corresponding conjugacy classes $\mu\in\Co_0$) by the letters A to K and M to S (see \autoref{table:19}). The $11$ conjugacy classes A to K already appeared in the Schellekens \voa{}s. The additional seven classes M to S have Frame shapes $1^{-8}2^{16}$, $1^82^{-8}4^8$, $2^44^4$, $1^42^13^{-4}6^5$, $1^22^15^{-2}10^3$, $2^14^16^112^1$ and $1^{-24}2^{24}$ (the involution $-\!\id$), respectively. Type~S corresponds to the odd moonshine module $\VO$~\cite{DGH88,Hua96b}.

Moreover, we label the Heisenberg commutant of the \svoa{} $\VB\otimes F$ by the letter T. Like for type~S, it has trivial associated lattice $L$ so that the Heisenberg commutant is equal to the even part of $\VB\otimes F$. For type~T (and similarly for potential fake copies) the Heisenberg commutant does not come from $\Co_0=\O(\Lambda)$ but corresponds to the 2A-involution of the monster group.

This is analogous to the (bosonic) moonshine module $V^\natural$ (and potential fake copies), which equals its Heisenberg commutant labelled by L and does not come from $\Co_0$ in the above sense.

\medskip

Most \svoa{}s associated with positive-definite, odd, unimodular lattices of rank $24$ have an $\mathcal{N}\!=\!4$ superconformal structure~\cite{HM18}. The odd moonshine module $\VO$ has an $\mathcal{N}\!=\!1$ superconformal structure~\cite{DGH88,GJ22,MS23b}. It is an open question which of the other self-dual \svoa{}s found in this paper have $\mathcal{N}\!=\!1$, $2$ or $4$ superconformal structures.


\subsection*{Outline}

In \autoref{sec:svoas} we review the theory of \svoa{}s and their representations. Then we introduce the regularity assumptions necessary for this work. We also discuss some examples of \svoa{}s.

In \autoref{sec:hol} we specialise to self-dual \svoa{}s and describe in particular the representation category and characters of their even parts. The former gives rise to the neighbourhood method if $8$ divides the central charge.

Then, in \autoref{sec:voas} we review the notion of the associated lattice of a \voa{} and how to decompose a \voa{} with respect to the corresponding dual pair.

In \autoref{sec:holvoa} we use the results from the preceding section to explain an enumeration procedure for self-dual \voa{}s. As an application, we describe the classification of these \voa{}s in central charge $24$.

Applying those results also to the even subalgebra of a self-dual \svoa{} yields one of the two main tools for classifying the latter. This is developed in \autoref{sec:evensub}.

In \autoref{sec:orbifold} we study the neighbourhood method, i.e.\ $\Z_2$-orbifolds of self-dual \voa{}s, which is the other main tool used in this text.

Finally, in \autoref{sec:class} we combine the results of the two preceding sections to classify the self-dual \svoa{}s of central charge $24$.


\subsection*{Acknowledgements}

We thank Thomas Creutzig, Philipp Höhn, Ching Hung Lam, Geoffrey Mason, Brandon Rayhaun, Nils Scheithauer, Hiroki Shimakura, Yuji Tachikawa and Hiroshi Yamauchi for helpful discussions. Gerald Höhn also expresses his gratitude posthumously to his advisor, Friedrich Hirzebruch, for his support of the project.

We also thank Yuto Moriwaki for bringing our attention to related, contemporaneous work of Yuji Tachikawa and his collaborators, who in turn made us aware of work of Brandon Rayhaun in a similar direction.

Gerald Höhn received funding through a Collaboration Grant for Mathematicians (award number 355294) provided by the \emph{Simons Foundation}. Additionally, he was supported by a Research Fellowship from the \emph{Deutsche Forschungsgemeinschaft} for two years from $1995$ to $1997$, with the primary aim of advancing the present classification project.

Sven Möller acknowledges support through the Emmy Noether Programme by the \emph{Deutsche Forschungsgemeinschaft} (project number 460925688). Sven Möller was also supported by a Postdoctoral Fellowship for Research in Japan and Grant-in-Aid KAKENHI 20F40018 by the \emph{Japan Society for the Promotion of Science}.

\medskip

This manuscript is submitted in coordination with two other groups, who independently explored questions similar to the ones addressed in this work.

In \cite{BKLTZ24} the authors classify the purely left-moving, modular invariant (i.e.\ having only one primary field), fermionic $2$-dimensional conformal field theories with central charge at most $16$ with the motivation of enumerating certain heterotic string theories. The mathematical translation is the classification of self-dual \svoa{}s of central charge at most $16$, and they also employ the $2$-neighbourhood method used in this text.

In \cite{Ray23} the author classifies most (suitably regular) \voa{}s of central charge at most $24$ with not more than four irreducible modules using a coset approach similar to \cite{SR23} (or \cite{Hoe95}). A fortiori, a classification of the self-dual \svoa{}s of central charge at most $22\shs\sfrac{1}{2}$ is obtained.

To the extent that they overlap, the results obtained by the other groups agree perfectly with our own classification results.


\section{Vertex Operator Superalgebras}\label{sec:svoas}
In this section we review \svoa{}s and their representation theory and discuss certain regularity conditions that will be assumed in this text.

Vertex (operator) superalgebras generalise vertex (operator) algebras \cite{Bor86,FLM88} by allowing odd (or fermionic) fields in addition to even (or bosonic) fields. We point out that there are some subtleties regarding the precise definition and we shall explain in the following which convention we use.


\subsection{Definition}\label{sec:defi}

Vertex superalgebras $V=V^{\bar0}\oplus V^{\bar1}$ (as defined, e.g., in \cite{Li96b,Kac98}) have a \emph{parity grading} by $\Z_2=\{\bar0,\bar1\}$. The even part $V^{\bar0}$ corresponds to the bosonic fields and the odd part $V^{\bar1}$ to the fermionic ones. To obtain the notion of a \svoa{} one adds a conformal structure and some finiteness and lower-boundedness conditions on the resulting \emph{$L_0$-weight grading}, which is usually assumed to have weights in $\frac{1}{2}\Z$.

Often, one also assumes that a \svoa{} $V$ has ``correct statistics'', i.e.\ that $V^{\bar0}=\bigoplus_{n\in\Z}V_n$ and $V^{\bar1}=\bigoplus_{n\in\frac{1}{2}+\Z}V_n$. This is perhaps the most common definition of a \svoa{} and also the one used in this text.\footnote{We remark, however, that there are interesting classes of \svoa{}s that do not satisfy this assumption, like, e.g., affine \svoa{}s, which are $\Z$-graded. A classification of self-dual, $\Z$-graded \svoa{}s is discussed in \cite{EM21}.}\textsuperscript{,}\footnote{We also point to \cite{RSW23} for a discussion of this (important) subtlety in the context of fermionic $2$-dimensional conformal field theories.}\textsuperscript{,}\footnote{The \svoa{} $V$ having ``correct statistics'' is closely related to the representation category of $V^{\bar0}$ satisfying a certain evenness property discussed in \autoref{sec:mtc}.}
For definiteness we state the complete definition:
\begin{defi}[\SVOA{}]
A \emph{\svoa{}} $V=(V,Y,\vac,\omega)$ of \emph{central charge} $c\in\C$ is a $\frac{1}{2}\Z$-graded super vector space
\begin{equation*}
V=\bigoplus_{n\in\frac{1}{2}\Z}V_n=V^{\bar0}\oplus V^{\bar1}
\end{equation*}
satisfying $V_n=\{0\}$ for $n$ sufficiently small, $\dim(V_n)<\infty$ for all $n\in\frac{1}{2}\Z$ and
\begin{equation*}
V^{\bar0}=\bigoplus_{n\in\Z}V_n\quad\text{and}\quad V^{\bar1}=\!\!\bigoplus_{n\in\frac{1}{2}+\Z}\!\!V_n,
\end{equation*}
equipped with a linear map called the \emph{state-field correspondence}
\begin{align*}
Y\colon V&\to\End(V)[[x,x^{-1}]],\\
v&\mapsto Y(v,x)=\sum_{n\in\Z}v_nx^{-n-1},\quad\text{with \emph{modes} }v_n\in\End(V),
\end{align*}
(the $Y(v,x)$ are called \emph{fields} or \emph{vertex operators}) such that for $u,v\in V$, $u_nv=0$ for $n$ sufficiently large and with two distinguished even vectors, the \emph{vacuum vector} $\vac\in V_0$ and the \emph{conformal vector} $\omega\in V_2$, with the convention
\begin{equation*}
Y(\omega,x)=\sum_{n\in\Z}L_nx^{-n-2},
\end{equation*}
i.e.\ $L_n=\omega_{n+1}$, satisfying the following conditions: the \emph{vacuum axioms}
\begin{equation*}
Y(\vac,x)=\id_V\quad\text{and}\quad Y(v,x)\vac\in V[[x]],\; Y(v,x)\vac|_{x=0}=v
\end{equation*}
for all $v\in V$, the \emph{translation axiom}
\begin{equation*}
[L_{-1},Y(v,x)]=Y(L_{-1}v,x)=\partial_xY(v,x)
\end{equation*}
for all $v\in V$, the \emph{Virasoro relations of central charge} $c\in\C$
\begin{equation*}
[L_m,L_n]=(m-n)L_{m+n}+\frac{m^3-m}{12}\delta_{m+n,0}\,c\,\id_V
\end{equation*}
for $m,n\in\Z$, as well as
\begin{equation*}
L_0v=nv
\end{equation*}
($n$ is called the \emph{$L_0$-weight}) for all $v\in V_n$, $n\in\frac{1}{2}\Z$ and the \emph{Jacobi identity}
\begin{align*}
&x_0^{-1}\delta\left(\frac{x_1-x_2}{x_0}\right)Y(u,x_1)Y(v,x_2)-(-1)^{|u||v|}x_0^{-1}\delta\left(\frac{x_2-x_1}{-x_0}\right)Y(v,x_2)Y(u,x_1)\\
&=x_2^{-1}\delta\left(\frac{x_1-x_0}{x_2}\right)Y(Y(u,x_0)v,x_2)
\end{align*}
for all $\Z_2$-homogeneous $u,v\in V$ with parity $|u|$, $|v|\in\Z_2$ where $\delta(x)=\sum_{n\in\Z}x^n$ and $(x_i-x_j)^n$ is to be expanded as a formal power series in $x_j$.
\end{defi}

If the odd part vanishes, i.e.\ if $V^{\bar1}=\{0\}$, then the definition reduces to that of a \voa{}. More generally, the even part $V^{\bar0}$ of a \svoa{} $V$ is a \voa{} and the odd part $V^{\bar1}$ is a $V^{\bar0}$-module.

\begin{conv}\label{conv:noteven}
Unless otherwise noted, we shall always assume the odd part $V^{\bar1}$ of a \svoa{} $V$ to be non-zero (similar to the distinction between even and odd integral lattices, see below).
\end{conv}

An automorphism of a \svoa{} $V$ is a vector-space automorphism $g$ satisfying $gY(v,x)g^{-1}=Y(gv,x)$ for all $u\in V$ and fixing $\vac$ and $\omega$. Then $g$ must preserve the $L_0$-weight and hence the parity. Every \svoa{} $V$ admits the \emph{canonical automorphism} $\sigma$ of order~$2$ in the centre of $\Aut(V)$ defined by $\sigma v=(-1)^{|v|}v$ for any $\Z_2$-homogeneous $v\in V$ with parity $|v|$. Then $V^{\bar0}=V^\sigma$ and $V^{\bar1}$ is the eigenspace associated with eigenvalue $-1$.

\medskip

By convention, all vertex operator (super)algebra extensions in this text will be \emph{conformal} extensions, i.e.\ extensions with the same conformal vector, unless noted otherwise. Similarly, by a vertex operator sub(super)algebra we mean a \emph{full} subalgebra, i.e.\ one with the same conformal vector.


\subsection{Modules}\label{sec:modules}

As for \voa{}s, there exist notions of weak, admissible and ordinary modules for \svoa{}s $V$ (in decreasing generality). Similarly, there are $g$-twisted versions of these for all $g\in\Aut(V)$ of finite order (see, e.g., \cite{DZ06,DZ05}). In this text, we shall only consider untwisted modules. Moreover, we shall only be dealing with \svoa{}s whose weak, admissible and ordinary (untwisted or twisted) modules are a direct sum of irreducible (ordinary) modules (see the notions of rationality and regularity discussed below). In that case, there is no distinction between weak and admissible modules, and for irreducible modules all three notions coincide.

Note that any untwisted or canonically twisted (i.e.\ $\sigma$-twisted) $V$-module is also an untwisted module for the even part $V^{\bar0}$.

We mention two subtleties regarding the definition of modules for \svoa{}s. First, consider admissible untwisted modules. The \svoa{} $V$ comes equipped with a decomposition into an even and an odd part, and the even part is distinguished by the fact that it contains the vacuum vector~$\vac$ (and the conformal vector $\omega$). Similarly, any admissible $V$-module $W$ may be equipped with a compatible parity $\Z_2$-grading, namely such that the vertex operator modes of even and odd elements of $V$ are even and odd maps of $W$, respectively. This may be stated as saying that any untwisted admissible $V$-module is $\sigma$-stable (see \autoref{sec:repcat}). However, this $\Z_2$-grading is not unique and there is in general no distinguished way to choose it.
\begin{conv}\label{conv:parity}
We shall not treat a compatible $\Z_2$-grading as part of the structure of a module for a \svoa{}.
\end{conv}

For admissible $\sigma$-twisted $V$-modules, the situation is slightly more complicated. In that case, there may be modules that are $\sigma$-stable and those that are not. The former are called $\sigma$-twisted \emph{super} $V$-modules in \cite{DNR21}, and only these afford a $\Z_2$-grading compatible with the one on $V$ in the above sense (again, see \autoref{sec:repcat}).


\subsection{Regularity}\label{sec:reg}

The \svoa{}s in this text will be assumed to satisfy a number of regularity conditions. First, recall for \voa{}s the notions of regularity and rationality (as defined in, e.g., \cite{DLM97}), $C_2$-cofiniteness, self-contragredience, CFT-type and simplicity. Note that a self-contragredient \voa{} of CFT-type is automatically simple. Also, under the assumption of CFT-type, regularity is equivalent to rationality and $C_2$-cofiniteness \cite{Li99,ABD04}. Recall further that if a \voa{} $V$ is rational and $C_2$-cofinite, then the central charge $c$ and the $L_0$-weights of all irreducible $V$-modules are rational numbers \cite{DLM00}. Rational or $C_2$-cofinite \voa{}s only have finitely many irreducible modules up to isomorphism \cite{DLM98,DLM00}.

All these notions can be defined analogously for \svoa{}s. Specifically, a vertex operator (super)algebra $V$ is of CFT-type if $V_n=\{0\}$ for $n<0$ and $\dim(V_0)=1$. A vertex operator (super)algebra $V$ is called $C_2$-cofinite if $V/\langle v_{-2}u\,|\,v,u\in V\rangle$ is finite-dimensional. Regularity (rationality) means that every weak (admissible) $V$-module is a direct sum of irreducible ordinary modules. Again, if a \svoa{} $V$ is of CFT-type, then regularity is equivalent to rationality and $C_2$-cofiniteness \cite{HA15} (see also \cite{DNR21}). Also for a \svoa{} $V$, if $V$ is rational, $\sigma$-rational (see, e.g., \cite{DZ06}) and $C_2$-cofinite, then the central charge $c$ and the $L_0$-weights of all irreducible untwisted and $\sigma$-twisted $V$-modules are rational numbers \cite{DZ05,DNR21}. Rational \svoa{}s only have finitely many irreducible modules up to isomorphism \cite{DZ06}.

A vertex operator (super)algebra is called \emph{\strat{}} if it is simple, rational, $C_2$-cofinite, self-contragredient and of CFT-type (and hence also regular). For a \voa{} $V$, one key consequence of this is Zhu's modular invariance result \cite{Zhu96} and that the representation category $\Rep(V)$ of $V$ is a modular tensor category \cite{Hua08b} (see \autoref{sec:mtc}).

Moreover, a simple \voa{} $V$ is said to satisfy the \emph{positivity condition} if all irreducible modules not isomorphic to $V$ have positive $L_0$-grading. For a \svoa{} we slightly modify this definition by requiring that also all irreducible $\sigma$-twisted modules have positive $L_0$-grading (see, e.g., \cite{DRY22}).

\medskip

A \svoa{} $V$ satisfying these regularity conditions is essentially equivalent to its even part $V^{\bar0}$ satisfying them. In one direction:
\begin{prop}
Let $V$ be a \strat{} \svoa{}. Then $V^{\bar0}$ is \strat{}, and $V^{\bar1}$ is an irreducible $V^{\bar0}$-module with positive $L_0$-grading.
\end{prop}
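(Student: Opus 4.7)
The plan is to verify, one by one, the conditions defining strong rationality for $V^{\bar0}$ (CFT-type, self-contragredient, simple, $C_2$-cofinite, rational), together with the two statements about $V^{\bar1}$. The guiding idea is that $V^{\bar0}$ is precisely the fixed-point subalgebra of $V$ under the canonical involution $\sigma$, so everything reduces to cyclic orbifold theory for a single order-two automorphism.

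First I would dispose of the routine items. CFT-type is inherited from $V$: one has $V^{\bar0}_n=V_n=\{0\}$ for $n<0$, and $V^{\bar0}_0=V_0=\C\vac$ since the vacuum is even. Positivity of the $L_0$-grading on $V^{\bar1}$ is automatic from the correct-statistics assumption: $V^{\bar1}$ is supported in weights in $\frac{1}{2}+\Z$, so CFT-type forces the lowest weight to be $\geq \frac{1}{2}>0$. Self-contragredience passes to $V^{\bar0}$ by restricting the invariant bilinear form on $V$: two homogeneous vectors pair to zero unless their $L_0$-weights agree, and since the weights on $V^{\bar0}$ and $V^{\bar1}$ differ by a half-integer the two subspaces are mutually orthogonal, so the form restricts to a non-degenerate invariant form on $V^{\bar0}$ and yields $V^{\bar0}\cong (V^{\bar0})'$. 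Simplicity of $V^{\bar0}$ and the irreducibility of $V^{\bar1}$ as a $V^{\bar0}$-module I would derive from the standard orbifold fact, due to Dong--Mason / Dong--Li--Mason, that for a simple vertex operator (super)algebra with a finite-order automorphism $g$, the fixed-point subalgebra is simple and the non-trivial $g$-eigenspaces are irreducible modules over it; specialised to $g=\sigma$ of order two this gives both statements simultaneously.

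The substantial part, and what I expect to be the main obstacle, is $C_2$-cofiniteness together with rationality of $V^{\bar0}$, which I would deduce from the cyclic orbifold theory for SVOAs. $C_2$-cofiniteness propagates to $V^{\bar0}$ by Miyamoto's theorem that $C_2$-cofiniteness is inherited by the fixed-point subalgebra under a finite group of automorphisms, suitably extended to the super setting. For rationality, the strategy is to classify irreducible $V^{\bar0}$-modules via induction: every such module is a summand of the restriction to $V^{\bar0}$ of either an irreducible untwisted $V$-module or an irreducible $\sigma$-twisted $V$-module, so complete reducibility of arbitrary admissible $V^{\bar0}$-modules follows from complete reducibility in both the untwisted and $\sigma$-twisted $V$-module categories. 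The delicate input here is $\sigma$-rationality of $V$, which is not explicitly part of the definition of \strat{} but which, for correct-statistics SVOAs of CFT-type, follows from rationality and $C_2$-cofiniteness via the arguments of \cite{DNR21}; this is the super analogue of the Carnahan--Miyamoto cyclic orbifold theorem and is the one place where the proof departs from the bosonic case.
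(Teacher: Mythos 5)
Your proposal is correct and follows essentially the same route as the paper: CFT-type, positivity of the $L_0$-grading on $V^{\bar1}$ and self-contragredience are handled by direct inspection of the grading and the invariant form, simplicity of $V^{\bar0}$ and irreducibility of $V^{\bar1}$ come from the standard fixed-point/eigenspace results for the order-two automorphism $\sigma$, and regularity (rationality plus $C_2$-cofiniteness) of $V^{\bar0}$ is deduced from the cyclic orbifold theorems of Miyamoto and Carnahan--Miyamoto as adapted to the super setting in \cite{DNR21,DRY21}, which is exactly the paper's citation. The only difference is that you spell out the induction/restriction argument for complete reducibility of $V^{\bar0}$-modules, which the paper leaves to the cited references.
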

\begin{proof}
Suppose that the \svoa{} $V$ is \strat{}. The even part $V^{\bar0}$ is given as the \fpvosa{} $V^\sigma$ under the canonical automorphism $\sigma$ of $V$. Since $V$ is simple, so is $V^{\bar0}$ (and $V^{\bar1}$ must be an irreducible $V^{\bar0}$-module). Clearly, $V^{\bar0}$ must be of CFT-type and $V^{\bar1}$ has only positive $L_0$-weights. Then, it is not difficult to see that $V^{\bar0}$ is self-contragredient. As explained in \cite{DNR21,DRY22}, one can show that $V^{\bar0}$ is regular using the orbifold results in \cite{Miy15,CM16}.
\end{proof}
Moreover \cite{DNR21}:
\begin{prop}
Let $V$ be a \strat{} \svoa{} satisfying the positivity condition. Then $V^{\bar1}$ is a simple current of order~2 as a $V^{\bar0}$-module.
\end{prop}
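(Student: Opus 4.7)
My plan is as follows. By the previous proposition, $V^{\bar 0}$ is \strat{} and $V^{\bar 1}$ is an irreducible $V^{\bar 0}$-module with positive $L_0$-grading, so by Huang's theorem $\Rep(V^{\bar 0})$ is a modular tensor category with braided tensor product $\boxtimes$. Under these hypotheses, asserting that $V^{\bar 1}$ is a simple current of order $2$ is equivalent to the isomorphism $V^{\bar 1}\boxtimes V^{\bar 1}\cong V^{\bar 0}$ in $\Rep(V^{\bar 0})$.

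The first step is to exhibit $V^{\bar 0}$ as a direct summand of $V^{\bar 1}\boxtimes V^{\bar 1}$. Restricting the vertex operator of $V$ to $V^{\bar 1}\otimes V^{\bar 1}\to V((z))$ and projecting onto $V^{\bar 0}((z))$ yields an intertwining operator $\mathcal{Y}$ of type $\binom{V^{\bar 0}}{V^{\bar 1}\,V^{\bar 1}}$. If $\mathcal{Y}$ were zero, then all products $u_nv$ with $u,v\in V^{\bar 1}$ would already lie in $V^{\bar 1}$; combined with the obvious $V^{\bar 0}\cdot V^{\bar 0}\subseteq V^{\bar 0}$ and $V^{\bar 0}\cdot V^{\bar 1}\subseteq V^{\bar 1}$, this would make $V^{\bar 0}$ a proper non-trivial ideal of $V$, contradicting the simplicity of $V$ (recall that $V^{\bar 1}\neq\{0\}$ by \autoref{conv:noteven}). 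Hence $\mathcal{Y}\neq 0$, and since $V^{\bar 0}$ is irreducible as a module over itself, it appears as a direct summand of $V^{\bar 1}\boxtimes V^{\bar 1}$.

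The second, harder step is to rule out any further summands. Here I would view $V=V^{\bar 0}\oplus V^{\bar 1}$ as a haploid, simple, super-commutative associative algebra object in the super-extension of $\Rep(V^{\bar 0})$, in the sense developed in \cite{DNR21}. Self-contragredience of $V$ restricts to a non-degenerate invariant bilinear pairing $V^{\bar 1}\otimes V^{\bar 1}\to V^{\bar 0}$, yielding $V^{\bar 1}\cong(V^{\bar 1})'$; together with the inclusion $V^{\bar 0}\hookrightarrow V^{\bar 1}\boxtimes V^{\bar 1}$ from the first step, rigidity identifies this inclusion with the coevaluation morphism, and $V^{\bar 1}$ is invertible (hence a simple current) exactly when this coevaluation is an isomorphism. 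To close the argument, I would argue by contradiction: any additional simple summand $N\subseteq V^{\bar 1}\boxtimes V^{\bar 1}$ would, via the multiplication and associativity of the algebra $V$, generate a non-zero proper ideal of $V$, once more contradicting simplicity. The positivity assumption enters to exclude a potential summand of conformal weight compatible with the even/odd grading of $V$ that could otherwise spoil this argument.

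The main obstacle is precisely this last step: translating the simplicity of the vertex superalgebra extension into the invertibility of $V^{\bar 1}$ as an object of the modular tensor category $\Rep(V^{\bar 0})$. Once this translation and rigidity are set up, the equivalent statement $\qdim(V^{\bar 1})=1$ and hence $V^{\bar 1}\boxtimes V^{\bar 1}\cong V^{\bar 0}$ falls out, establishing the simple current property of order $2$.
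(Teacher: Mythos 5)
The paper does not prove this proposition at all: it is quoted directly from \cite{DNR21}, so the relevant comparison is whether your sketch would actually constitute a proof. Your first step is essentially sound: for $u,v\in V^{\bar1}$ one has $Y(u,x)v\in V^{\bar0}((x))$ automatically by the parity grading, and this restriction is a non-zero intertwining operator of type $\binom{V^{\bar0}}{V^{\bar1}\,V^{\bar1}}$ (if it vanished, $V^{\bar1}$ -- not $V^{\bar0}$, as you write; $V^{\bar0}$ is never an ideal since $Y(u,x)\vac=\e^{xL_{-1}}u$ -- would be a proper non-zero ideal, contradicting simplicity; equivalently, simple vertex superalgebras have no zero divisors). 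By irreducibility of $V^{\bar0}$ and semisimplicity of $\Rep(V^{\bar0})$ this exhibits $V^{\bar0}$ as a direct summand of $V^{\bar1}\boxtimes V^{\bar1}$.

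The genuine gap is your second step. An extra simple summand $N\subseteq V^{\bar1}\boxtimes V^{\bar1}$ lives in the \emph{abstract} fusion product, not inside $V$: the multiplication of $V$ factors through a morphism $V^{\bar1}\boxtimes V^{\bar1}\to V^{\bar0}$ which simply annihilates $N$, and no ideal of $V$ is produced, so simplicity gives no contradiction. Likewise, saying that rigidity ``identifies the inclusion with the coevaluation'' and that invertibility follows once this coevaluation is an isomorphism is circular -- that isomorphism \emph{is} the statement to be proved. What is actually needed is a quantitative input, and this is precisely where the positivity condition enters (not in the vague way you indicate): under strong rationality and positivity, quantum dimensions of irreducible $V^{\bar0}$-modules are well defined, positive and agree with categorical dimensions \cite{DJX13,DLN15}; quantum Galois theory for the order-$2$ canonical automorphism $\sigma$ (with $V^\sigma=V^{\bar0}$) gives $\qdim_{V^{\bar0}}(V)=2$, hence $\qdim_{V^{\bar0}}(V^{\bar1})=1$, and an irreducible module of quantum dimension $1$ is a simple current. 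Self-contragredience of $V$ restricts to a non-degenerate pairing on $V^{\bar1}$, so $(V^{\bar1})'\cong V^{\bar1}$ and therefore $V^{\bar1}\boxtimes V^{\bar1}\cong V^{\bar0}$, giving order $2$. This dimension argument (or the equivalent super-category machinery of \cite{DNR21}) is the missing core of the proof; without it, simplicity plus the $\Z_2$-grading alone does not force the fusion square to collapse to $V^{\bar0}$.
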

Here, $V^{\bar1}$ being a simple current of order~$2$ simply means that the fusion product (see \autoref{sec:mtc}) of $V^{\bar1}$ with itself is isomorphic to $V^{\bar0}$.

Conversely:
\begin{prop}
Let $V$ be a \svoa{} whose even part $V^{\bar0}$ is \strat{} and whose odd part $V^{\bar1}$ is a simple current of order~2 (and in particular irreducible and self-contragredient) as $V^{\bar0}$-module, with positive $L_0$-grading. Then $V$ is \strat{}.
\end{prop}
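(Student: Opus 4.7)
The plan is to verify the five properties of strong rationality (simplicity, rationality, $C_2$-cofiniteness, self-contragredience, CFT-type) together with self-duality for $V$, invoking the theory of simple-current extensions. Under the hypotheses, $V = V^{\bar0}\oplus V^{\bar1}$ is precisely such an extension of the \strat{} \voa{} $V^{\bar0}$ by the order-$2$ simple current $V^{\bar1}$.

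First I would dispose of the easy properties. CFT-type is immediate: since $V^{\bar0}$ is of CFT-type and $V^{\bar1}$ has strictly positive $L_0$-grading, one has $V_0 = V^{\bar0}_0 = \C\vac$ and $V_n = \{0\}$ for $n<0$. For simplicity of $V$, I would argue that any non-zero ideal $I\subseteq V$ is in particular a $V^{\bar0}$-submodule; since $V^{\bar0}$ and $V^{\bar1}$ are inequivalent irreducible $V^{\bar0}$-modules (the latter being a non-trivial simple current), $I$ must be a sum of some of them, and using that $V^{\bar1}\cdot V^{\bar1}$ generates $V^{\bar0}$ (as the fusion product $V^{\bar1}\boxtimes V^{\bar1}\cong V^{\bar0}$ is realised inside $V$) one shows $I=V$. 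Self-contragredience follows because the contragredient module $V'$ decomposes as $(V^{\bar0})'\oplus(V^{\bar1})'\cong V^{\bar0}\oplus V^{\bar1} = V$ as $V^{\bar0}$-modules, and this isomorphism can be promoted to a \svoa{} isomorphism by the standard argument identifying the invariant bilinear form.

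The main work is in establishing rationality (together with $\sigma$-rationality, required by the conventions of \autoref{sec:reg}) and $C_2$-cofiniteness. For these I would invoke the general theory of simple-current extensions: by the results of Yamauchi and of Carnahan--Miyamoto (and analogous statements for \svoa{}s as discussed in \cite{DNR21}), a simple-current extension of a \strat{} \voa{} by a finite abelian group of simple currents is again rational and $C_2$-cofinite, provided the resulting extension is again a (super)algebra. In our case the group is $\Z_2$, generated by the order-$2$ simple current $V^{\bar1}$, and the extension exists by hypothesis, being $V$ itself. $C_2$-cofiniteness can additionally be seen more elementarily from the fact that $V$ is a finite-length $V^{\bar0}$-module and $V^{\bar0}$ is $C_2$-cofinite. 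Rationality of $V$ then follows since the irreducible (untwisted and $\sigma$-twisted) $V$-modules are obtained by inducing irreducible $V^{\bar0}$-modules in the orbits of the $\Z_2$-action on $\Rep(V^{\bar0})$ defined by fusion with $V^{\bar1}$, and any weak $V$-module restricts to a weak $V^{\bar0}$-module, which decomposes into irreducibles by the rationality of $V^{\bar0}$; reassembling these along the induction yields the decomposition of the original weak module into irreducible $V$-modules.

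The most delicate point is ensuring that all three types of twisted sectors behave well: the positivity condition on $V^{\bar1}$ ensures that the untwisted sector behaves correctly, and $\sigma$-rationality (hence the existence and positivity of $\sigma$-twisted modules) is obtained by applying the same simple-current reconstruction but to the sector with monodromy $-1$, which picks out exactly the $\sigma$-twisted irreducible $V$-modules. I expect the bookkeeping around $\sigma$-stable versus non-$\sigma$-stable twisted modules discussed in \autoref{sec:modules} to be the main technical hurdle, which is why I would cite \cite{DNR21,CM16} rather than re-prove the extension theorem from scratch.
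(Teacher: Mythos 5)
Your proof is correct and follows essentially the same route as the paper's: CFT-type from the grading assumptions, simplicity and self-contragredience by direct inspection of the two-summand decomposition, rationality delegated to the known fact that it lifts along the $\Z_2$ simple-current extension (the paper cites \cite{DH12} where you invoke the general simple-current extension machinery and \cite{DNR21}), and $C_2$-cofiniteness via \cite{ABD04}. Your excursion into $\sigma$-rationality is harmless but unnecessary, since the paper's notion of strong rationality does not include it.
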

\begin{proof}
The assumptions on the $L_0$-grading imply that $V$ is of CFT-type. It is not difficult to see that $V$ is simple and self-contragredient. The rationality of $V^{\bar0}$ implies the rationality of $V$ \cite{DH12}. Moreover, $V$ is $C_2$-cofinite by \cite{ABD04}.
\end{proof}
We note that a priori (without using \cite{Miy15,CM16}, see \cite{DRY22}) the assumption of rationality of $V^{\bar0}$ is stronger than the assumption of rationality of $V$. Indeed, the former is equivalent to $V$ being rational and $\sigma$-rational (under some assumptions) \cite{DH14}. If we wanted to avoid using the heavy machinery of \cite{Miy15,CM16}, for the purposes of this text, we could simply \emph{assume} the regularity of $V^{\bar0}$.

We can also relate the positivity conditions for a \svoa{}~$V$ and for its even part $V^{\bar0}$:
\begin{prop}
Let $V$ be a \strat{} \svoa{} satisfying the positivity condition. Then also $V^{\bar0}$ satisfies the positivity condition.
\end{prop}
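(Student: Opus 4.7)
The plan is to use the orbifold-theoretic correspondence between the irreducible $V^{\bar0}$-modules and the irreducible untwisted and $\sigma$-twisted $V$-modules, arising from the fact that $V^{\bar0}=V^\sigma$ is the fixed-point subalgebra under the canonical involution $\sigma$. Specifically, I would appeal to the standard orbifold result (essentially \cite{Miy15,CM16}, suitably adapted to the super setting as in \cite{DRY21}) that every irreducible $V^{\bar0}$-module occurs as a $V^{\bar0}$-submodule of some irreducible $g$-twisted $V$-module with $g\in\{\id,\sigma\}$. Equivalently, in the simple-current-extension picture, with $V=V^{\bar0}\oplus V^{\bar1}$ and $V^{\bar1}$ an order-$2$ simple current over $V^{\bar0}$, each irreducible untwisted or $\sigma$-twisted $V$-module restricts either to a single irreducible $V^{\bar0}$-module or to a sum of two non-isomorphic irreducibles, and this exhausts the irreducible $V^{\bar0}$-modules up to isomorphism.

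Granted this, I would take an irreducible $V^{\bar0}$-module $W\not\cong V^{\bar0}$ together with an irreducible (untwisted or $\sigma$-twisted) $V$-module $M$ containing $W$, and split into three cases. If $M$ is $\sigma$-twisted, the positivity condition for $V$ forces $M$ to have positive lowest $L_0$-weight, hence so does its $V^{\bar0}$-submodule $W$. If $M$ is untwisted with $M\not\cong V$, positivity of $V$ again yields positive lowest weight for $M$ and therefore for $W$. Finally, if $M\cong V$, then as a $V^{\bar0}$-module $V=V^{\bar0}\oplus V^{\bar1}$, so $W$ must be isomorphic to $V^{\bar1}$; and since $V$ is of CFT-type and obeys the correct-statistics convention $V^{\bar1}\subset\bigoplus_{n\in\frac{1}{2}+\Z}V_n$, the lowest $L_0$-weight of $V^{\bar1}$ is at least $\frac{1}{2}>0$.

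The main delicate point is the orbifold decomposition statement in the super setting, namely that every irreducible $V^{\bar0}$-module genuinely arises from an untwisted or $\sigma$-twisted irreducible $V$-module. Once this is in hand, strong rationality ensures finite semisimple module categories and the already-established fact that $V^{\bar1}$ is an order-$2$ simple current lets one apply the extension/restriction dictionary without ambiguity, reducing the proposition to the short case analysis above.
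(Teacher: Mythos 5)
Your proposal is correct and follows essentially the same route as the paper, whose proof consists precisely of invoking the result of \cite{DRY21} that every irreducible $V^{\bar0}$-module appears as a submodule of an irreducible untwisted or $\sigma$-twisted $V$-module; the case analysis you add (positivity of $V$ for twisted and non-vacuum untwisted modules, and CFT-type plus correct statistics forcing the lowest weight of $V^{\bar1}$ to be at least $\sfrac{1}{2}$) is exactly the short verification the paper leaves implicit.
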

\begin{proof}
This follows from the result in \cite{DRY22} that every irreducible $V^{\bar0}$-module appears as a submodule of an irreducible untwisted or $\sigma$-twisted $V$-module.
\end{proof}
Conversely, by definition, any untwisted or $\sigma$-twisted $V$-module is an untwisted $V^{\bar0}$-module. Hence, if $V^{\bar0}$ is simple, rational and satisfies the positivity condition, then also $V$ satisfies the positivity condition.

\medskip

For the purposes of this text we define:
\begin{defi}
A vertex operator (super)algebra is called \emph{nice} if it is \strat{} and satisfies the positivity condition.
\end{defi}

Summarising the above results, if $V$ is a nice \svoa{} (recalling also \autoref{conv:noteven} that $V$ is not purely even), then:
\begin{enumerate}
\item The central charge $c$ and the $L_0$-weights of all $V^{\bar0}$-modules and of all untwisted and $\sigma$-twisted $V$-modules are in $\Q$. Moreover, $c>0$.\footnote{There is the nice (and self-dual) \voa{} $\C\vac$ of central charge~$0$, but it is purely even.}
\item $V^{\bar0}$ is nice.
\item $\Rep(V^{\bar0})$ is a pseudo-unitary modular tensor category (see \autoref{sec:mtc}).
\item $V^{\bar1}$ is a simple current of order~$2$ (and in particular irreducible and self-contragredient) as $V^{\bar0}$-module.
\end{enumerate}
The fact that the central charge must be positive follows from the non-existence of certain holomorphic modular forms of negative weight (see, e.g., \cite{DM04b} for a proof under the condition that $V^{\bar0}$ is nice; cf.\ \autoref{sec:chars}).

\medskip

A simple vertex operator (super)algebra $V$ is called \emph{self-dual} (or \emph{holomorphic}) if $V$ itself is the only irreducible $V$-module (cf.\ \autoref{conv:parity}), i.e.\ if $\Rep(V)\cong\Vect$ in the rational case. These \svoa{}s will be the main focus of this text.


\subsection{Modular Tensor Categories and Evenness}\label{sec:mtc}

In this section we briefly review modular tensor categories (over $\C$), which appear, for instance, as the representation categories of \strat{} \voa{}s, and allude to a subtlety regarding the signs of the categorical dimensions. The special case of the representation category of the even part $V^{\bar0}$ of a nice \svoa{}~$V$ shall be described in more detail in \autoref{sec:repcat}.

\emph{Modular tensor categories} are fusion categories (certain rigid, semisimple, $\Vect$-enriched monoidal categories with simple tensor unit $\mathbf{1}$ and only finitely many isomorphism classes of simple objects) equipped with a ribbon structure (i.e.\ compatible braiding, twist and duality structures) satisfying a certain non-degeneracy condition on the braiding (see, e.g., \cite{BK01,Tur10,EGNO15}). Dropping this non-degeneracy requirement, one speaks of premodular or ribbon fusion categories.

Huang showed that the representation category $\Rep(V)$ of a \strat{} \voa{} $V$ admits the structure of a modular tensor category \cite{Hua08b}. For instance, the tensor-product bifunctor $\otimes$ on $\Rep(V)$ is taken to be the Huang-Lepowsky tensor (or fusion) product $\boxtimes_{P(1)}$ (or simply denoted by $\boxtimes$), the unit object~$\mathbf{1}$ is the \voa{} $V$ itself, the dual objects are the contragredient (or gradewise) duals $W^*=W'$ and the twist isomorphisms are given by $\theta_W=\e^{2\pi\i L_0}$ for a $V$-module $W$.

\medskip

We now discuss a certain sign ambiguity for the categorical dimensions (or the pivotal structure) in a modular tensor category.

In a rigid monoidal category, an object $X$ is called \emph{invertible} (or \emph{simple current}) if the corresponding evaluation and coevaluation morphisms are isomorphisms. Then, $X\otimes X^*\cong X^*\otimes X\cong\mathbf{1}$ where $X^*$ denotes the dual object of $X$. In a fusion category, all invertible objects are simple and an object is invertible if and only if its Frobenius-Perron dimension (which is always a positive real number) is $1$. Given a fusion category $\mathcal{C}$, the isomorphism classes of invertible objects form an abelian group whose group structure is induced by the tensor product. This group is often called $G(\mathcal{C})$, but we simply denote it by $A$.

Modular tensor categories (and more generally ribbon categories) are examples of spherical pivotal categories, with the pivotal structure determined by the braiding and the twist. More precisely, with the braiding fixed, there is a bijection between the possible spherical pivotal and ribbon structures. In spherical pivotal categories, the categorical dimensions take values in $\R$ \cite{Mue03}. Given a modular tensor category $\mathcal{C}$, the spherical pivotal structures on the underlying braided fusion category are in bijection with $\Hom(A,\{\pm1\})\cong\Hom(A/(2A),\C^\times)$ \cite{BNRW16,GN08}, where we replaced the role of $A$ with its dual via the non-degenerate braiding.

A fusion category is called \emph{pseudo-unitary} if its global categorical dimension equals the global Frobenius-Perron dimension. Equivalently, such a fusion category admits a (unique) spherical pivotal structure with respect to which the categorical dimensions of all simple objects are positive, and hence coincide with the Frobenius-Perron dimensions \cite{ENO05}.

We call a fusion category with a choice of (spherical) pivotal structure (like a modular tensor category) \emph{even} if the categorical dimensions of all simple objects are positive (so that this category is in particular pseudo-unitary; and any pseudo-unitary fusion category can be made even).

Given a nice \voa{} $V$ (in particular satisfying the positivity condition), the representation category $\Rep(V)$, endowed with Huang's modular tensor category structure, is always even (and hence pseudo-unitary). This follows from the fact that the categorical dimensions equal the quantum dimensions (defined via the characters), which are positive under the given regularity assumptions \cite{DJX13,DLN15}.

If $V$ is a \voa{} whose representation category $\Rep(V)$ is a (not necessarily even) modular tensor category and $M$ is an invertible simple object of order~$2$, the different possible signs of the self-braiding and the ribbon twist on $M$ lead to variations on the definition of a \svoa{} (like $\Z$-graded \svoa{}s), realised on $V\oplus M$ \cite{CKM17,CKL20,RSW23}. But if $\Rep(V)$ is even (for instance, because $V$ is nice), then any extension of $V$ to a \svoa{} must automatically have ``correct statistics'', which are the \svoa{}s we consider in this text (see \autoref{sec:defi}).

\medskip

Of particular interest to us (see, e.g., \autoref{sec:holcat} and \autoref{sec:cartan}) will be pointed modular tensor categories. A fusion category is called \emph{pointed} if all its simple objects are invertible.

It follows from the abelian cohomology theory of Eilenberg and Mac Lane \cite{EML53,EML54} that pointed braided fusion categories up to braided monoidal equivalence are in natural bijection with pairs $(A,q)$ of a finite abelian group $A$ and a quadratic form $q\colon A\to\Q/\Z$ \cite{JS93}. (An intimately related statement in the language of abelian intertwining algebras is given in \cite{DL93}; see also \cite{MS89}.)

Let us denote the braided fusion category associated with the pair $(A,q)$ by $\mathcal{C}(A,q)$. The isomorphism classes of simple objects of $\mathcal{C}(A,q)$ are given by $W^\alpha$, $\alpha\in A$, with tensor product $W^\alpha\otimes W^\beta\cong W^{\alpha+\beta}$ for $\alpha$, $\beta\in A$. The quadratic form $q$ describes the self-braiding and the associated bilinear form (defined by $b(\alpha,\beta)\coloneqq q(\alpha+\beta)-q(\alpha)-q(\beta)$ for all $\alpha$, $\beta\in A$) the double-braiding.

All pointed braided fusion categories are pseudo-unitary, i.e.\ there is a (spherical) pivotal structure on $\mathcal{C}(A,q)$ such that the categorical dimensions $\dim(W^\alpha)$ of all simple objects are equal to $1$. Equivalently, we can choose a ribbon structure, thus making $\mathcal{C}(A,q)$ into a premodular category, such that the ribbon twist coincides with the self-braiding on all simple objects, i.e.\ $\theta_{W^\alpha}=\e^{2\pi\i q(\alpha)}\id_{W^\alpha}$. In the following, we denote by $\mathcal{C}(A,q)$ this even choice of premodular category. The $S$- and $T$-matrix of $\mathcal{C}(A,q)$ are $S_{W^\alpha,W^\beta}=\e^{-2\pi\i b(\alpha,\beta)}$ and $T_{W^\alpha,W^\beta}=\delta_{\alpha,\beta}\e^{2\pi\i q(\alpha)}$.\footnote{The definitions of the $S$- and $T$-matrix used here are essentially the inverses (or Hermitian transposes) of the definitions in \cite{EGNO15}. This convention is compatible with the $S$- and $T$-matrix obtained from Zhu's modular invariance of \voa{} characters \cite{DLN15}.}

If the quadratic form $q$ on $A$ is non-degenerate, i.e.\ if the bilinear form $b$ associated with $q$ is non-degenerate, we call $(A,q)$ a discriminant form (see, e.g., \cite{Nik80}). In that case, $\mathcal{C}(A,q)$ is an (even) modular tensor category. Every even, pointed modular tensor category $\mathcal{C}(A,q)$ can be realised as the representation category of a lattice \voa{} $V_L$ for a positive-definite, even lattice $L$ with discriminant form $L'/L\cong(A,q)$ \cite{DL93,Nik80} (see \autoref{sec:latsvoa}), and some also appear in the form of twisted Drinfeld doubles $\mathcal{D}_\omega(G)$ in the context of cyclic orbifolds of nice, self-dual \voa{}s \cite{Moe16,EMS20a} (see \autoref{sec:graphmeth}).

Finally, recalling the discussion above, we note that the different modular tensor category structures afforded by the braided fusion category $\mathcal{C}(A,q)$ are in bijection with $\Hom(A,\{\pm1\})\cong\Hom(A,(\frac{1}{2}\Z)/\Z)$. More concretely, the quadratic form $q\colon A\to(\frac{1}{2}\Z)/\Z$, which describes the self-braiding, differs from the ribbon twist by a homomorphism in $\Hom(A,\{\pm1\})$ given by the categorical dimensions $\dim(W^\alpha)$ so that the twist is also described by a quadratic form $q_\theta\colon A\to(\frac{1}{2}\Z)/\Z$, which has the same associated bilinear form $b=b_\theta$ as $q$. The sign ambiguity in the categorical dimensions is nothing but the failure, in general, of a fixed bilinear form to define a unique quadratic form.

In the context of a \voa{} $V$ with pointed representation category $\Rep(V)$ (equivalently described by an abelian intertwining algebra), the possible inequality of the quadratic forms $q$ and $q_\theta$ was discussed in \cite{Moe16,EMS20a} (see also \cite{Car12b,Car20}, from where we borrow and generalise the notion of evenness).


\subsection{Representation Category}\label{sec:repcat}

In the following, we briefly describe the representation category $\Rep(V^{\bar0})$ of the even part of a nice \svoa{} $V$ \cite{DNR21}, which fits well into the framework of the \emph{16-fold way} \cite{BGHNPRW17,LKW17b,JR21}, a statement about minimal modular extensions of slightly degenerate modular tensor categories. In the special case of interest to this paper, where the \svoa{} $V$ is self-dual, i.e.\ $\Rep(V)\cong\Vect$, the category $\Rep(V^{\bar0})$, which is a minimal modular extension of $\sVect$, and the $16$-fold way were already described in \cite{Kit06} (see also \cite{Hoe95}). This shall be discussed in more detail in \autoref{sec:holcat}.

Let $V$ be a \svoa{} and recall that $\sigma$ denotes the canonical involution of $V$. Let $M=(M,Y_M)$ be an untwisted or canonically twisted (i.e.\ $\sigma$-twisted) $V$-module. Then we define
\begin{equation*}
M\circ\sigma\coloneqq(M,Y_{M\circ\sigma})\quad\text{with}\quad Y_{M\circ\sigma}(\,\cdot\,,x)\coloneqq Y_M(\sigma\,\cdot\,,x),
\end{equation*}
which is again an untwisted or $\sigma$-twisted module, respectively. The module $M$ is called $\sigma$-stable if $M\circ\sigma\cong M$, or equivalently if there is a linear isomorphism $\phi_\sigma\colon M\to M$ satisfying
\begin{equation*}
\phi_\sigma Y_M(v,x)\phi_\sigma^{-1}=Y_M(\sigma v,x)
\end{equation*}
for all $v\in V$. If $M$ is irreducible, then by Schur's lemma, this $\phi_\sigma$ is unique up to a non-zero scalar, and since $\langle\sigma\rangle$ is cyclic, we may impose the condition that $\phi_\sigma^2=1$ so that we obtain a representation of $\langle\sigma\rangle$ on $M$. There is still a remaining freedom of replacing $\phi_\sigma$ by $-\phi_\sigma$ on each $\sigma$-stable, irreducible untwisted or $\sigma$-twisted $V$-module, except on $V$ itself, where $\phi_\sigma=\sigma$ is fixed (see the discussion in \autoref{sec:modules}).

Given a $\sigma$-stable, irreducible untwisted or $\sigma$-twisted $V$-module $M$, we define $M^{\bar0}$ and $M^{\bar1}$ as the eigenspaces of $\phi_\sigma$ associated with eigenvalues $1$ and $-1$, respectively. Recall that all untwisted $V$-modules $M$ are $\sigma$-stable. If such a module $M$ is irreducible, then the $L_0$-weights of $M$ lie in a coset of $\frac{1}{2}\Z$ and $M^{\bar0}$ and $M^{\bar1}$ correspond to the two resulting cosets of $\Z$. For an irreducible $\sigma$-twisted $V$-module $M$, the $L_0$-weights lie in a coset of $\Z$, and $M$ may or may not be $\sigma$-stable.

Now, let $V$ be a nice \svoa{}. Then $\Rep(V^{\bar0})$ is an even (and hence pseudo-unitary) modular tensor category and the simple objects, i.e.\ the irreducible $V^{\bar0}$-modules, are exactly given by:
\begin{enumerate}
\item\label{item:class1} $M^{\bar0}$ and $M^{\bar1}$ for each irreducible $V$-module $M$,
\item\label{item:class2} $M^{\bar0}$ and $M^{\bar1}$ for each $\sigma$-stable, irreducible $\sigma$-twisted $V$-module $M$,
\item\label{item:class3} $M$ for each pair of $\sigma$-unstable, irreducible $\sigma$-twisted $V$-modules $\{M,M\circ\sigma\}$.
\end{enumerate}
Note that in the last case $M$ and $M\circ\sigma$, while not isomorphic as $V$-modules, are isomorphic as $V^{\bar0}$-modules.

\medskip

Up to a $16$-fold ambiguity, it is possible to describe the modular tensor category $\Rep(V^{\bar0})$ in terms of the full subcategory $\Rep^{\bar0}(V^{\bar0})$ generated by the irreducible $V^{\bar0}$-submodules of the untwisted $V$-modules, i.e.\ by the modules under item~\eqref{item:class1}. The category $\Rep^{\bar0}(V^{\bar0})$ is closed under the tensor product and hence a premodular subcategory of $\Rep(V^{\bar0})$. However, it is no longer modular as it has the non-trivial Müger centre $\sVect$, generated by the simple objects $V^{\bar0}$ and $V^{\bar1}$. On the other hand, one can show that $\Rep^{\bar0}(V^{\bar0})$ is the Müger centraliser of $V^{\bar1}$ in $\Rep(V^{\bar0})$, and hence $\Rep(V^{\bar0})$ is a minimal modular extension of $\Rep^{\bar0}(V^{\bar0})$ \cite{DNR21}.

In general, a pseudo-unitary (or even) premodular category is called super-modular (or slightly degenerate) if its Müger centre is $\sVect$. It was conjectured in \cite{BGHNPRW17} that any super-modular category has exactly $16$ pseudo-unitary (or even) minimal modular extensions up to braided monoidal equivalence. This conjecture was proved in \cite{LKW17b} under the assumption that there is at least one such extension, and the existence of an extension was established in \cite{JR21}.

It is, in fact, not difficult to list the $16$ minimal modular extensions of $\Rep^{\bar0}(V^{\bar0})$, of which $\Rep(V^{\bar0})$ is one. The other extensions can be realised by considering the \voa{} $V\otimes F^{\otimes l}$ for $l\in\Z$, where $F$ is the Clifford \svoa{} we introduce in \autoref{sec:split}. Indeed, the premodular categories $\Rep^{\bar0}(V^{\bar0})$ and $\Rep^{\bar0}((V\otimes F^{\otimes l})^{\bar0})$ are braided monoidally equivalent and the $\Rep((V\otimes F^{\otimes l})^{\bar0})$ for $l\in\Z$ realise all minimal modular extensions, where $\Rep((V\otimes F^{\otimes l})^{\bar0})\cong\Rep((V\otimes F^{\otimes k})^{\bar0})$ if and only if $l=k\pmod{16}$.


\subsection{Free Fermions}\label{sec:split}

In this section we introduce a well-known free-field \svoa{} and state a related splitting result that has important consequences for the structure theory of \svoa{}s.

The \emph{Clifford \svoa{}} $F$ \cite{KW94} is one of two important \svoa{}s describing free fermions, the other being the symplectic fermion \svoa{} \cite{Kau00}. $F$ is nice, self-dual and has central charge $c=\sfrac{1}{2}$. The tensor square $F^{\otimes 2}$ is often called the $bc$-ghost system in physics parlance, and by the boson-fermion correspondence it is isomorphic to $V_\Z$, the \svoa{} associated with the odd, unimodular lattice $\Z$ called the standard lattice (see \autoref{sec:latsvoa}).

Moreover, $F$ may be decomposed as $F\cong L(\sfrac{1}{2},0)\oplus L(\sfrac{1}{2},\sfrac{1}{2})$ in terms of the irreducible modules $F^{\bar0}\cong L(\sfrac{1}{2},0)$ and $F^{\bar1}\cong L(\sfrac{1}{2},\sfrac{1}{2})$ for the simple Virasoro \voa{} $L(\sfrac{1}{2},0)$ at central charge $\sfrac{1}{2}$, also called the Virasoro minimal model for $(p,q)=(3,4)$ or the critical Ising model in physics.

\medskip

The tensor power $F^{\otimes l}$, $l\in\N$, is a nice, self-dual \svoa{} (a \voa{} for $l=0$) of central charge $c=\sfrac{l}{2}$. It was shown in \cite{Hoe95} that the even part of $F^{\otimes l}$ is isomorphic to
\begin{equation*}
(F^{\otimes l})^{\bar0}\cong\begin{cases}
L(\sfrac{1}{2},0)&\text{if }l=1,\\
V_{2\Z}&\text{if }l=2,\\
L_{\so_l}(1,0)&\text{if }l\geq3,\\
\end{cases}
\end{equation*}
where $L_{\g}(k,0)$ denotes the simple affine \voa{} associated with the simple Lie algebra $\g$ at level $k\in\Ns$, also called a Wess-Zumino-Witten model (and $V_{2\Z}$ denotes the \voa{} associated with the even sublattice $2\Z$ of the standard lattice $\Z$). We remark that
\begin{equation*}
L_{\so_l}(1,0)\cong\begin{cases}
L_{A_1}(2,0)&\text{if }l=3,\\
L_{B_{(l-1)/2}}(1,0)&\text{if }l\geq5\text{ odd},\\[+3pt]
L_{A_1}(1,0)^{\otimes2}\cong V_{A_1^2}&\text{if }l=4,\\
L_{A_3}(1,0)\cong V_{A_3}&\text{if }l=6,\\
L_{D_{l/2}}(1,0)\cong V_{D_{l/2}}&\text{if }l\geq8\text{ even},\\
\end{cases}
\end{equation*}
where we applied the exceptional Lie algebra isomorphisms and $V_{A_1^2}$, $V_{A_3}$ and $V_{D_{l/2}}$ denote the lattice \voa{}s associated with the corresponding root lattices (see \autoref{sec:latsvoa}).

As we have seen in \autoref{sec:repcat}, the representation category of $(F^{\otimes l})^{\bar0}$ only depends on $l$ modulo $16$. It will be described in more detail in \autoref{sec:holcat}.

\medskip

In the following, we describe a well-known result that facilitates the splitting off of free fermions from a \svoa{} (see, e.g., \cite{GS88,Hoe95,Tam99,DH12}).

Let $V$ be a simple, self-contragredient \svoa{} of CFT-type. We denote by $\langle V_{1/2}\rangle$ the vertex subsuperalgebra of $V$ generated by the vectors of $L_0$-weight $\sfrac{1}{2}$ and equip it with a conformal structure of central charge $\dim(V_{1/2})/2$, as described in \cite{DH12}. Then this \svoa{} is isomorphic to tensor copies of the Clifford \svoa{} $F$. Indeed:
\begin{prop}\label{prop:split}
Let $V$ be a simple, self-contragredient \svoa{} of CFT-type of central charge $c$. Then
\begin{equation*}
\langle V_{1/2}\rangle\cong F^{\otimes l}
\end{equation*}
with $l=\dim(V_{1/2})$. In particular, $\langle V_{1/2}\rangle$ is nice, self-dual and of central charge $\sfrac{l}{2}$. Moreover, $V$ decomposes into a tensor product
\begin{equation*}
V\cong F^{\otimes l}\otimes\bar{V}
\end{equation*}
where $\bar{V}\coloneqq\Com_V(\langle V_{1/2}\rangle)$ is of central charge $c-\sfrac{l}{2}$ and satisfies $\bar{V}_{1/2}=\{0\}$.
\end{prop}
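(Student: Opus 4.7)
The plan is to exhibit $\langle V_{1/2}\rangle$ as a Clifford-type \svoa{} by extracting the mode algebra of $V_{1/2}$, identify it with a tensor power of the free fermion $F$, and then split it off using the self-duality of $F^{\otimes l}$.

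First I would analyse the algebraic structure on $V_{1/2}$. Being of CFT-type forces $V_0=\C\vac$ and $V_n=\{0\}$ for $n<0$. For any $a,b\in V_{1/2}$ the product $a_n b$ has $L_0$-weight $-n$, so only $a_0b\in V_0$ can be non-zero among the singular terms of the OPE. Writing $a_0b=\langle a,b\rangle\vac$ defines a bilinear form on $V_{1/2}$, which is symmetric by the skew-symmetry $Y(a,x)b=-\e^{xL_{-1}}Y(b,-x)a$ (valid for odd vectors) and coincides up to normalisation with the restriction of the invariant bilinear form afforded by self-contragredience; in particular, it is non-degenerate. Applying the Borcherds commutator formula then yields the Clifford relations
\begin{equation*}
\{a_m,b_n\}=\langle a,b\rangle\,\delta_{m+n+1,0}\,\id_V
\end{equation*}
for all $a,b\in V_{1/2}$ and $m,n\in\Z$.

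Next, I would choose a basis $a^1,\dots,a^l$ of $V_{1/2}$ orthonormal with respect to $\langle\cdot,\cdot\rangle$, which is possible over $\C$ because the form is non-degenerate. Each subalgebra $\langle a^i\rangle$ is generated by a single weight-$\sfrac{1}{2}$ odd vector with self-pairing $1$ and hence is isomorphic to the Clifford \svoa{} $F$, while $\langle a^i\rangle$ and $\langle a^j\rangle$ for $i\neq j$ supercommute by the diagonal form of $\langle\cdot,\cdot\rangle$. Combined with the verification that the conformal vector produced in \cite{DH12} agrees with the standard Virasoro vector of $F^{\otimes l}$ (both are the unique Virasoro vector of central charge $\sfrac{l}{2}$ built quadratically from $V_{1/2}$), this yields $\langle V_{1/2}\rangle\cong F^{\otimes l}$.

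Finally, since $F^{\otimes l}$ is nice and self-dual, its only irreducible module up to isomorphism is $F^{\otimes l}$ itself. Viewing $V$ as an $F^{\otimes l}$-module therefore gives a canonical tensor decomposition $V\cong F^{\otimes l}\otimes\bar V$ with $\bar V:=\Com_V(\langle V_{1/2}\rangle)$, and $\bar V$ inherits an \svoa{} structure of central charge $c-\sfrac{l}{2}$. The vanishing $\bar V_{1/2}=\{0\}$ follows from non-degeneracy of $\langle\cdot,\cdot\rangle$: any $v\in\bar V_{1/2}$ sits inside $V_{1/2}$ and must satisfy $\langle v,w\rangle=0$ for every $w\in V_{1/2}$ because $v_0w$ vanishes by definition of the commutant. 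The main obstacle lies in the identification step, where one must verify not only the abstract Clifford relations but also that the internal conformal structure on $\langle V_{1/2}\rangle$ matches that of $F^{\otimes l}$; once this is done, the tensor decomposition is a routine consequence of the rationality and uniqueness of $F^{\otimes l}$-modules.
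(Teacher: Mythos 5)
Your proposal is correct and takes essentially the same route as the paper, whose proof is simply a citation of \cite{DH12} (see also \cite{GS88,Hoe95,Tam99}): the argument there is precisely the zero-mode Clifford pairing on $V_{1/2}$, non-degenerate by simplicity and self-contragredience, the resulting identification $\langle V_{1/2}\rangle\cong F^{\otimes l}$, and the splitting $V\cong F^{\otimes l}\otimes\Com_V(\langle V_{1/2}\rangle)$ coming from the regularity and self-duality of $F^{\otimes l}$. The only step that is more than routine---upgrading the decomposition of $V$ as an $F^{\otimes l}$-module to an isomorphism of \svoa{}s with the commutant as the second tensor factor---is exactly what the cited reference supplies, and you correctly flag it as the point where rationality and the uniqueness of the irreducible $F^{\otimes l}$-module enter.
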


Here, $\Com_V(A)$ denotes the \emph{commutant} (or \emph{centraliser} or \emph{coset}) of some subset $A$ of $V$ \cite{FZ92} (see also \cite{LL04}). We call $\bar{V}$ the \emph{stump} of $V$. We remark that the stump $\bar{V}$ can be a \svoa{} or a \voa{}, i.e.\ $\bar{V}$ does not need to have any fermionic fields left.

The following result is straightforward (cf.\ \cite{DH12}):
\begin{prop}
In the situation of the above proposition, also the stump $\bar{V}$ is simple, self-contragredient and of CFT-type. Moreover,
\begin{enumerate}
\item $V$ is rational if and only if $\bar{V}$ is,
\item $V$ is $C_2$-cofinite if and only if $\bar{V}$ is,
\item $V$ is \strat{} if and only if $\bar{V}$ is,
\item $V$ is nice if and only if $\bar{V}$ is,
\item $V$ is self-dual if and only if $\bar{V}$ is.
\end{enumerate}
\end{prop}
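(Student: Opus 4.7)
The plan is to combine the tensor decomposition $V\cong F^{\otimes l}\otimes\bar V$ from \autoref{prop:split} with the fact that $F^{\otimes l}$ is itself nice and self-dual, so that every property in the statement is transparent on the $F^{\otimes l}$-factor and must transfer to $\bar V$ through the tensor product.

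First I would verify the hypothesis-level properties (simple, self-contragredient, of CFT-type) directly: a proper ideal $I\subsetneq\bar V$ would produce the proper ideal $F^{\otimes l}\otimes I\subsetneq V$, contradicting simplicity of $V$; the identification $\vac_V=\vac_{F^{\otimes l}}\otimes\vac_{\bar V}$ together with $(F^{\otimes l})_0=\C\vac$ forces $\bar V_n=\{0\}$ for $n<0$ and $\bar V_0=\C\vac$; and self-contragredience follows from $(A\otimes B)'\cong A'\otimes B'$ combined with $V\cong V'$ and $(F^{\otimes l})'\cong F^{\otimes l}$.

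For the five equivalences I would argue both directions for each. The implications from $\bar V$ to $V$ are the standard statements that tensor products of \svoa{}s preserve rationality, $C_2$-cofiniteness, strong rationality, niceness and self-duality. For the converse implications one uses that $F^{\otimes l}$ is itself rational, $C_2$-cofinite, and self-dual: any admissible $\bar V$-module $M$ extends via $F^{\otimes l}\otimes M$ to an admissible $V$-module, whose decomposition into irreducibles together with $F^{\otimes l}$-semisimplicity yields an irreducible decomposition of $M$, giving rationality of $\bar V$; the factorisation $V/C_2(V)\cong F^{\otimes l}/C_2(F^{\otimes l})\otimes \bar V/C_2(\bar V)$ together with non-triviality of the first tensor factor yields $C_2$-cofiniteness of $\bar V$; combining these with CFT-type gives strong rationality; the positivity condition transfers because irreducible untwisted or $\sigma$-twisted $F^{\otimes l}$-modules all have non-negative lowest $L_0$-weight, with equality only for $F^{\otimes l}$ itself in the untwisted sector, so niceness is equivalent on the two sides; and self-duality follows since $\operatorname{Rep}(F^{\otimes l})$ is trivial, making $\operatorname{Rep}(V)\simeq\operatorname{Rep}(\bar V)$ as (super) modular tensor categories.

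The main obstacle I anticipate is the backward direction for rationality, where one must carefully justify that every admissible module for $V=F^{\otimes l}\otimes\bar V$ factors through the tensor decomposition in a way that yields an irreducible decomposition of the given $\bar V$-module. In the super setting this requires some care about which irreducibles of the tensor product arise as tensor products of irreducibles of the factors, but the argument is essentially a super-variant of the well-known tensor-product results for \voa{}s, and the full statement is in fact recorded for this situation in \cite{DH12}, so that the proof reduces largely to an exercise in referencing.
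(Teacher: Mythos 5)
Your argument is correct and matches what the paper intends: the paper gives no written proof, merely calling the result straightforward and citing \cite{DH12}, and your tensor-decomposition argument (transferring each property through $V\cong F^{\otimes l}\otimes\bar V$ using that $F^{\otimes l}$ is nice and self-dual, with the rationality and module-factorisation subtleties delegated to \cite{DH12}) is exactly the intended route.
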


The proposition shows in particular that if $V$ is nice, then so is $\bar{V}$ and hence $l=\dim(V_{1/2})/2$ is bounded from above by the central charge $c$ of $V$ (cf.\ \autoref{sec:cartan}, where we recall that the rank of the Lie algebra $V_1$ is also bounded by $c$).

The above results entail that in order to study \svoa{}s it suffices to look at their stumps, i.e.\ those vertex operator (super)algebras without vectors of $L_0$-weight $\sfrac{1}{2}$. Conversely, if one has studied \svoa{}s of a certain central charge, then one automatically obtains results on \svoa{}s in smaller central charges as well. We shall make this more precise in \autoref{sec:holsplit} in the setting of self-dual \svoa{}s.


\subsection{Lattice \SVOA{}s}\label{sec:latsvoa}

Examples of nice vertex operator (super)algebras are given by the vertex operator (super)algebras $V_L$ associated with positive-definite, integral lattices $L$.

A \emph{lattice} $L$ is a free $\Z$-module of finite rank equipped with a non-degenerate, symmetric bilinear form $\langle\cdot,\cdot\rangle\colon L\times L\to\Q$. A lattice is called \emph{integral} if $\langle\alpha,\beta\rangle\in\Z$ for all $\alpha,\beta\in L$. It is called \emph{even} if $\langle\alpha,\alpha\rangle/2\in\Z$ for all $\alpha\in L$, in which case it is also integral. An integral lattice that is not even is called \emph{odd}. Given an integral lattice $L$, we define the \emph{even sublattice} $L_\text{ev}=\{\alpha\in L\,|\,\langle\alpha,\alpha\rangle/2\in\Z\}$, which is a sublattice of $L$ of index $1$ if $L$ is even and $2$ if $L$ is odd.

Let $L'=\{\alpha\in L\otimes_\Z\Q\,|\,\langle\alpha,\beta\rangle\in\Z\text{ for all }\beta\in L\}$ denote the \emph{dual lattice} of~$L$. Then $L$ is integral if and only if $L\subseteq L'$. In that case, the quotient $L'/L$ is a finite abelian group. If $L$ is even, the \emph{norm} map $q_L\colon L'/L\to\Q/\Z,\alpha+L\mapsto\langle\alpha,\alpha\rangle/2+\Z$ is a non-degenerate quadratic form on $L'/L$ and thus endows $L'/L=(L'/L,q_L)$ with the structure of a \emph{discriminant form} (see, e.g., \cite{Nik80}). An integral lattice is called \emph{unimodular} if $L'=L$, i.e.\ if $L'/L$ is trivial.

If $L$ is an odd lattice and $L_\text{ev}$ its even sublattice, then $|(L_\text{ev})'/L_\text{ev}|=4|L'/L|$ and $L$ is an index-$2$ extension of $L_\text{ev}$ corresponding to an element in $(L_\text{ev})'/L_\text{ev}$ of order~$2$ and norm $\sfrac{1}{2}\pmod{1}$. Indeed, choose some $h\in L\setminus L_\text{ev}$. Then $\langle h,h\rangle/2\in\sfrac{1}{2}+\Z$, $2h\in L_\text{ev}$ and $L=L_\text{ev}\cup(h+L_\text{ev})$. Moreover, $L'=\{\alpha\in(L_\text{ev})'\,|\,\langle h,\alpha\rangle\in\Z\}$ and $(L_\text{ev})'\setminus L'=\{\alpha\in(L_\text{ev})'\,|\,\langle h,\alpha\rangle\in\sfrac{1}{2}+\Z\}$.

Similarly, $L'$ has index $2$ in $(L_\text{ev})'$. Choose some $\tilde{h}\in(L_\text{ev})'\setminus L'$. Then $2\tilde{h}\in L'$ and $(L_\text{ev})'=L'\cup(\tilde{h}+L')$. Moreover, $\langle h,\tilde{h}\rangle\in\sfrac{1}{2}+\Z$, $L_\text{ev}=\{\alpha\in L\,|\,\langle\tilde{h},\alpha\rangle\in\Z\}$ and $L\setminus L_\text{ev}=\{\alpha\in L\,|\,\langle\tilde{h},\alpha\rangle\in\sfrac{1}{2}+\Z\}$.

\medskip

For a positive-definite, integral lattice $L$, the construction $V_L$ (see, e.g., \cite{FLM88,Kac98}) yields a nice \voa{} if $L$ is even and a nice \svoa{} if $L$ is odd. The central charge is $c=\rk(L)\in\N$. In the odd case, the even part of $V_L$ is given by $V_L^{\bar0}=V_{L_\text{ev}}$.

Recall that for an even lattice $L$ the irreducible modules of the lattice \voa{} $V_L$ are up to isomorphism given by $V_{\alpha+L}$ for $\alpha+L\in L'/L$, with $L_0$-grading in $\langle\alpha,\alpha\rangle/2+\Z$ \cite{DL93}. Moreover, they are all simple currents and satisfy the fusion rules $V_{\alpha+L}\boxtimes V_{\beta+L}\cong V_{\alpha+\beta+L}$ for $\alpha,\beta\in L$. In addition, their quantum and categorical dimensions are all equal to $1$. In other words, the modular tensor category $\Rep(V_L)$ is pointed, even and equivalent to $\mathcal{C}(L'/L)$ (see \autoref{sec:mtc}).

Also for lattice \svoa{}s $V_L$, $L$ an odd lattice, the irreducible modules up to isomorphism are $V_{\alpha+L}$ for $\alpha+L\in L'/L$ \cite{BK04}, with $L_0$-grading in $\langle\alpha,\alpha\rangle/2+\frac{1}{2}\Z$, and their quantum dimensions are all equal to $1$.

Consequently, a lattice vertex operator (super)algebra $V_L$ is self-dual if and only if the lattice $L$ is unimodular.

The canonical automorphism $\sigma$ of order $2$ on $V_L$ is given by the inner automorphism $\sigma=\e^{2\pi\i\tilde{h}(0)}$ for any choice of $\tilde{h}\in(L_\text{ev})'\setminus L'$. In particular, $V_L^\sigma=V_L^{\bar0}=V_{L_\text{ev}}$, and $V_L^{\bar1}=V_{h+L_\text{ev}}$ is the eigenspace of $\sigma$ associated with the eigenvalue $-1$. As $\sigma$ is an inner automorphism, the $\sigma$-twisted modules are described by \cite{Li96} and are given by $V_{\tilde{h}+\alpha+L}$ for $\alpha+L\in L'/L$, with $L_0$-grading in $\langle\tilde{h}+\alpha,\tilde{h}+\alpha\rangle/2+\Z$.

One can show that for lattice \svoa{}s $V_L$ all $\sigma$-twisted $V_L$-modules are $\sigma$-stable. Then, each irreducible untwisted or $\sigma$-twisted $V_L$-module decomposes into two irreducible modules for $V_L^{\bar0}=V_{L_\text{ev}}$, namely corresponding to the decomposition of a coset of $L$ into two cosets of $L_\text{ev}$, i.e.\ $V_{\alpha+L}=V_{\alpha_0+L_\text{ev}}\oplus V_{\alpha_0+h+L_\text{ev}}$ and $V_{\tilde{h}+\alpha+L}=V_{\tilde{h}+\alpha_0+L_\text{ev}}\oplus V_{\tilde{h}+\alpha_0+h+L_\text{ev}}$ for all $\alpha+L\in L'/L$. Here, $\alpha_0+L_\text{ev}$ denotes a twofold choice of $\alpha+L$ modulo $L_\text{ev}$, which except for $0_0=0$ is arbitrary (cf.\ \autoref{sec:repcat}).

\medskip

The representation category of the even part $V_L^{\bar0}=V_{L_\text{ev}}$ is the pointed modular tensor category $\Rep(V_L^{\bar0})\cong\mathcal{C}((L_\text{ev})'/L_\text{ev})$ associated with the discriminant form $(L_\text{ev})'/L_\text{ev}$ of the even lattice $L_\text{ev}$. It is a minimal modular extension of the full subcategory $\Rep^{\bar0}(V_L^{\bar0})$ corresponding to the subgroup $L'/L_\text{ev}$, which inherits the now degenerate quadratic form from $(L_\text{ev})'/L_\text{ev}$. In total, there are $16$ non-equivalent minimal modular extensions, of which eight are again pointed. The latter are characterised by the discriminant forms $((L\oplus\Z^l)_\text{ev})'/(L\oplus\Z^l)_\text{ev}$ for $l\in\N$ (modulo~$8$), where $\Z^l$ is the \emph{standard lattice} of rank $l$, which is positive-definite, odd (for $l>0$) and unimodular.

In fact, given a positive-definite, odd lattice $L$, the $2l$ many vectors of norm $\sfrac{1}{2}$ (where $l\in\N$ and $l\leq\rk(L)$) span a sublattice isomorphic to $\Z^l$. Moreover, $L$ decomposes into an orthogonal direct sum $L=\Z^l\oplus\bar{L}$ where $\bar{L}$ is the orthogonal complement of $\Z^l$ in $L$, which has rank $\rk(L)-l$ and no vectors of norm $\sfrac{1}{2}$. The lattice $\bar{L}$ is usually positive-definite and odd, but it can also be even. By considering the corresponding \svoa{}s, this splitting result can be viewed as a special case of \autoref{prop:split}.


\section{Self-Dual \SVOA{}s}\label{sec:hol}

In this section we study nice, self-dual \svoa{}s and describe the representation category of the even part, which only depends on the central charge $c$ modulo $8$. We then explain the first half of our classification strategy, based on the $2$-neighbourhood method. Finally, we describe the vector-valued character of the even part, specialising to central charge $24$.

Recall that a simple vertex operator (super)algebra $V$ is called self-dual (or holomorphic) if $V$ itself is the only irreducible $V$-module.


\subsection{Representation Category}\label{sec:holcat}

Nice, self-dual \voa{}s $V$ only exist for central charge $c\in8\N$ and the corresponding modular tensor category $\Rep(V)\cong\Vect$ is by definition trivial. For $c\leq24$ they are classified, with the exception of the uniqueness of the moonshine module $V^\natural$ (see \autoref{sec:holvoa}).

By contrast, for nice, self-dual \svoa{}s $V$ the situation is richer. By \cite{DZ05}, in addition to the unique irreducible untwisted module $V$, the \svoa{} $V$ either has one irreducible $\sigma$-twisted $V$-module that is $\sigma$-stable or two inequivalent irreducible $\sigma$-twisted $V$-modules that are not $\sigma$-stable (but isomorphic as $V^{\bar0}$-modules). Correspondingly, $\Rep(V^{\bar0})$ has either three or four irreducible modules, respectively (see \autoref{sec:repcat}).

However, the subcategory $\Rep^{\bar0}(V^{\bar0})$ is the same for all nice, self-dual \svoa{}s:
\begin{prop}
Let $V$ be a nice, self-dual \svoa{}. Then $\Rep^{\bar0}(V^{\bar0})\cong\sVect$ as (even) premodular category.
\end{prop}
Indeed, the category $\Rep^{\bar0}(V^{\bar0})$ is generated by the two irreducible $V^{\bar0}$-modules $V^{\bar0}$ and $V^{\bar1}$ with $L_0$-weights in $\Z$ and $\sfrac{1}{2}+\Z$, respectively. Since we assumed that $V$ and hence $V^{\bar0}$ is nice, $\Rep^{\bar0}(V^{\bar0})$ is the even (i.e.\ pseudo-unitary) premodular category braided monoidally equivalent to $\sVect=\mathcal{C}(\Z_2,q)$ with the degenerate quadratic form $q$ defined by $q(\bar0)=0+\Z$ and $q(\bar1)=\sfrac{1}{2}+\Z$ (see \autoref{sec:mtc}).

As explained in \autoref{sec:repcat}, $\Rep(V^{\bar0})$ must be one of the $16$ possible minimal modular extensions of $\Rep^{\bar0}(V^{\bar0})\cong\sVect$, which are described in \cite{Kit06}. We shall see that this extension is determined by the central charge $c$ of $V$ modulo~$8$ (see also \cite{Hoe95}, where the following is proved under stronger assumptions):
\begin{prop}
Let $V$ be a nice, self-dual \svoa{}. Then the central charge $c$ of $V$ is in $\frac{1}{2}\Ns$. Moreover, $\Rep(V^{\bar0})$ is a minimal modular extension of $\sVect$ and is uniquely determined by $c$ modulo~8 (listed in \autoref{table:16}).
\end{prop}
\begin{proof}
We just saw that $\Rep^{\bar0}(V^{\bar0})\cong\sVect$ as premodular category and that $\Rep(V^{\bar0})$ is an (even) minimal modular extension of $\Rep^{\bar0}(V^{\bar0})$. There are $16$ such extensions \cite{Kit06,LKW17b}, which are listed in \autoref{table:16}.

Eight of these extensions are pointed, and for definiteness we consider one of them, say $\mathcal{C}(2_{\II}^{+2})$, associated with the discriminant form $2_{\II}^{+2}$. By \cite{DNR21}, there exists an $l\in\N$, unique modulo $16$, such that $\Rep((V\otimes F^{\otimes l})^{\bar0})\cong\mathcal{C}(2_{\II}^{+2})$, where $F$ denotes the Clifford \svoa{}. As this category is even and pointed, the central charge $c+\sfrac{l}{2}$ of $(V\otimes F^{\otimes l})^{\bar0}$ must be in $\Z$ and moreover equal to the signature of the discriminant form modulo $8$ \cite{Moe16,EMS20a}. Here, $\sign(2_{\II}^{+2})=0\pmod{8}$ so that $c+\sfrac{l}{2}=0\pmod{8}$. Hence, $c\in\frac{1}{2}\Z$. Moreover, as $l$ was unique modulo $16$, this determines $c$ uniquely modulo $8$.

So, there is a well-defined map from the $16$ possible structures of $\Rep(V^{\bar0})$ to the residue class of the central charge $c$ in $(\frac{1}{2}\Z)/(8\Z)$. This map is surjective (take $V=F^{\otimes l}$ for $l$ in $\{1,\dots,16\}$) and hence injective.
\end{proof}

In \autoref{table:16} we list the structure of the modular tensor category $\Rep(V^{\bar0})$ for each value of the central charge $c$ modulo $8$, and in particular the fusion algebra and the $L_0$-weights modulo~$1$ (i.e.\ the ribbon twist) of the irreducible modules. Each category may be realised by the even part of a tensor power of the Clifford \svoa{} $F$, i.e.\ by $\Rep((F^{\otimes l})^{\bar0})$ for $l\in\Ns$, which is isomorphic to the simple affine \voa{} $L_{\so_l}(1,0)$ for $l\geq3$ (see \autoref{sec:split}).

\begin{table}[ht]\caption{The $16$ modular tensor categories for the even part of nice, self-dual \svoa{}s $V$ (cf.\ \cite{Kit06}).}
\begin{tabular}{r|l|l|l}
$c\pmod{8}$ & $\Rep(V^{\bar0})$ & Fusion alg. & Weights (mod 1)\\\hline
$0\ph$              & $\mathcal{C}(2_{\II}^{+2})$ & $\C[\Z_2\times\Z_2]$ & $[0]$, $[\sfrac{1}{2}]$, $0$, $0$ \\
$\sfrac{1}{2}$      & Ising                       & Ising                & $[0]$, $[\sfrac{1}{2}]$, $(\sfrac{1}{16})$ \\
$1\ph$              & $\mathcal{C}(4_1^{+1})$     & $\C[\Z_4]$           & $[0]$, $\sfrac{1}{8}$, $[\sfrac{1}{2}]$, $\sfrac{1}{8}$ \\
$1\shs\sfrac{1}{2}$ & $(A_1,2)$                   & Ising                & $[0]$, $[\sfrac{1}{2}]$, $(\sfrac{3}{16})$ \\
$2\ph$              & $\mathcal{C}(2_2^{+2})$     & $\C[\Z_2\times\Z_2]$ & $[0]$, $[\sfrac{1}{2}]$, $\sfrac{1}{4}$, $\sfrac{1}{4}$ \\
$2\shs\sfrac{1}{2}$ & $(B_2,1)$                   & Ising                & $[0]$, $[\sfrac{1}{2}]$, $(\sfrac{5}{16})$ \\
$3\ph$              & $\mathcal{C}(4_3^{-1})$     & $\C[\Z_4]$           & $[0]$, $\sfrac{3}{8}$, $[\sfrac{1}{2}]$, $\sfrac{3}{8}$ \\
$3\shs\sfrac{1}{2}$ & $(B_3,1)$                   & Ising                & $[0]$, $[\sfrac{1}{2}]$, $(\sfrac{7}{16})$ \\
$4\ph$              & $\mathcal{C}(2_{\II}^{-2})$ & $\C[\Z_2\times\Z_2]$ & $[0]$, $[\sfrac{1}{2}]$, $\sfrac{1}{2}$, $\sfrac{1}{2}$ \\
$4\shs\sfrac{1}{2}$ & $(B_4,1)$                   & Ising                & $[0]$, $[\sfrac{1}{2}]$, $(\sfrac{9}{16})$ \\
$5\ph$              & $\mathcal{C}(4_5^{-1})$     & $\C[\Z_4]$           & $[0]$, $\sfrac{5}{8}$, $[\sfrac{1}{2}]$, $\sfrac{5}{8}$ \\
$5\shs\sfrac{1}{2}$ & $(B_5,1)$                   & Ising                & $[0]$, $[\sfrac{1}{2}]$, $(\sfrac{11}{16})$ \\
$6\ph$              & $\mathcal{C}(2_6^{+2})$     & $\C[\Z_2\times\Z_2]$ & $[0]$, $[\sfrac{1}{2}]$, $\sfrac{3}{4}$, $\sfrac{3}{4}$ \\
$6\shs\sfrac{1}{2}$ & $(B_6,1)$                   & Ising                & $[0]$, $[\sfrac{1}{2}]$, $(\sfrac{13}{16})$ \\
$7\ph$              & $\mathcal{C}(4_7^{+1})$     & $\C[\Z_4]$           & $[0]$, $\sfrac{7}{8}$, $[\sfrac{1}{2}]$, $\sfrac{7}{8}$ \\
$7\shs\sfrac{1}{2}$ & $(B_7,1)$                   & Ising                & $[0]$, $[\sfrac{1}{2}]$, $(\sfrac{15}{16})$ \\
\end{tabular}
\label{table:16}
\end{table}

$\Rep(V^{\bar0})$ is pointed if and only if $c\in\Z$. In that case, all irreducible $V^{\bar0}$-modules are simple currents and have categorical and quantum dimensions equal to $1$. The global dimension, i.e.\ the sum of the squares of the dimensions, is $4$, as it must be \cite{DNR21}. $\Rep(V^{\bar0})$ is characterised by a discriminant form (see \autoref{sec:mtc}), whose genus symbol we list (see, e.g., \cite{CS99}). The fusion algebra is the group algebra $\C[\Z_2\times\Z_2]$ if $c\in2\Z$ or $\C[\Z_4]$ if $c\in1+2\Z$. The self-dual \svoa{} $V$ is the extension of $V^{\bar0}$ corresponding to a subgroup isomorphic to $\Z_2$ with $L_0$-weights in $\Z$ and $\sfrac{1}{2}+\Z$ (placed in square brackets), and except for $c=4\pmod{8}$ this subgroup is unique with these properties. The other two irreducible modules both have $L_0$-weights in $\sfrac{c}{8}+\Z$.

If $c\in\sfrac{1}{2}+\Z$, there are exactly three irreducible modules, of which two are simple currents with dimension $1$ and $L_0$-weights in $\Z$ and $\sfrac{1}{2}+\Z$ and one has dimension $\sqrt{2}$ and $L_0$-weights in $\sfrac{c}{8}+\Z$ (which we place in brackets). Again, the global dimension is $4$, as it must be. The self-dual \svoa{} $V$ is the extension of $V^{\bar0}$ corresponding to the two simple currents (again placed in square brackets). For $c=\sfrac{1}{2}\pmod{8}$ we obtain the representation category of the simple Virasoro \voa{} $L(\sfrac{1}{2},0)\cong F^{\bar0}$ (or critical Ising model). For other $c\in\sfrac{1}{2}+\Z$ we obtain similar modular tensor categories (listed, e.g., in \cite{RSW09}) with the same fusion rules. These are symbolically given by $[0]\times[0]=[0]$, $[0]\times[\sfrac{1}{2}]=[\sfrac{1}{2}]$, $[0]\times(\sfrac{c}{8})=(\sfrac{c}{8})$, $[\sfrac{1}{2}]\times[\sfrac{1}{2}]=[0]$, $[\sfrac{1}{2}]\times(\sfrac{c}{8})=(\sfrac{c}{8})$ and $(\sfrac{c}{8})\times(\sfrac{c}{8})=[0]+[\sfrac{1}{2}]$, where we name the irreducible modules by their $L_0$-weights modulo $1$.


\subsection{Free-Fermion Splitting}\label{sec:holsplit}

The classification of nice, self-dual \svoa{}s of some central charge $c\in\frac{1}{2}\Ns$ also yields the classification of all of nice, self-dual \svoa{}s of central charge at most $c$. Indeed, the splitting result in \autoref{sec:split} implies:
\begin{prop}\label{prop:holsplit}
For every $c\in\frac{1}{2}\N$ and $l\in\N$ the map $V\mapsto V\otimes F^l$ defines a bijection between the isomorphism classes of nice, self-dual \svoa{}s $V$ of central charge $c$ with $\dim(V_{1/2})=k$ and those of central charge $c+\sfrac{l}{2}$ with $\dim(V_{1/2})=k+l$.
\end{prop}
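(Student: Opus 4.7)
The plan is to construct an explicit inverse to the map $\Phi_l\colon V\mapsto V\otimes F^{\otimes l}$ using the canonical splitting supplied by \autoref{prop:split}.

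First I would check that $\Phi_l$ is well-defined on the given class of SVOAs. Niceness and self-duality are preserved under tensor products (strong rationality, $C_2$-cofiniteness, self-contragredience, CFT-type, and the positivity condition all pass to tensor factors and products; irreducible modules of $V\otimes F^{\otimes l}$ are tensor products of irreducible modules of the factors, and $V$ and $F^{\otimes l}$ are both self-dual, so the tensor product is self-dual). The central charge is additive, yielding $c+\sfrac{l}{2}$, and since $V_0=\C\vac$ and $(F^{\otimes l})_0=\C\vac$ the weight-$\sfrac{1}{2}$ space decomposes as $V_{1/2}\otimes\vac\oplus\vac\otimes(F^{\otimes l})_{1/2}$, giving dimension $k+l$.

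Next I would build the inverse. Let $W$ be a nice, self-dual \svoa{} of central charge $c+\sfrac{l}{2}$ with $\dim(W_{1/2})=k+l$. Applying \autoref{prop:split} to $W$ produces a canonical subalgebra $\langle W_{1/2}\rangle\cong F^{\otimes(k+l)}$ together with a tensor decomposition $W\cong F^{\otimes(k+l)}\otimes\bar W$, where $\bar W=\Com_W(\langle W_{1/2}\rangle)$ is nice, self-dual of central charge $c-\sfrac{k}{2}$ and satisfies $\bar W_{1/2}=\{0\}$. Define $\Psi_l(W):=F^{\otimes k}\otimes\bar W$. This is a nice, self-dual \svoa{} of central charge $\sfrac{k}{2}+(c-\sfrac{k}{2})=c$ with weight-$\sfrac{1}{2}$ space of dimension $k$.

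Finally I would verify that $\Psi_l$ and $\Phi_l$ are mutual inverses on isomorphism classes. For $W$ in the target, $\Phi_l(\Psi_l(W))=F^{\otimes k}\otimes\bar W\otimes F^{\otimes l}\cong F^{\otimes(k+l)}\otimes\bar W\cong W$. For $V$ in the source with splitting $V\cong F^{\otimes k}\otimes\bar V$ from \autoref{prop:split}, the canonical subalgebra of $V\otimes F^{\otimes l}$ generated by its weight-$\sfrac{1}{2}$ vectors is $\langle V_{1/2}\rangle\otimes F^{\otimes l}\cong F^{\otimes(k+l)}$, so its commutant is $\bar V$, giving $\Psi_l(V\otimes F^{\otimes l})=F^{\otimes k}\otimes\bar V\cong V$.

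The only delicate point is the well-definedness of $\Psi_l$ on isomorphism classes, i.e.\ that the stump $\bar W$ depends only on $W$ up to isomorphism. This is guaranteed by the intrinsic description $\bar W=\Com_W(\langle W_{1/2}\rangle)$ in \autoref{prop:split}, since both $\langle W_{1/2}\rangle$ and its commutant are determined canonically by the subspace $W_{1/2}$; any isomorphism of \svoa{}s carries these subalgebras onto one another. Once this is granted, the bijection is immediate.
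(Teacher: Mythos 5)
Your proposal is correct and follows essentially the same route as the paper, which derives \autoref{prop:holsplit} directly from the splitting result \autoref{prop:split}, with the stump $\bar W=\Com_W(\langle W_{1/2}\rangle)$ furnishing the canonical inverse exactly as you describe. The only point to add is the caveat the paper itself records after the proposition: for $k=0$ (and $8\,|\,c$) your inverse $\Psi_l$ may land on a purely even \voa{}, so purely even \voa{}s must be admitted into the domain of the bijection, since by \autoref{conv:noteven} they are otherwise excluded from the class of \svoa{}s.
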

Strictly speaking, for $k=0$ and $8\,|\,c$ we have to include (purely even) \voa{}s in the domain of the bijection.

In other words, a nice, self-dual vertex operator (super)algebra $V$ with $V_{1/2}=\{0\}$ (of some central charge $c\in\frac{1}{2}\N$) defines an infinite family of vertex operator superalgebras
\begin{equation*}
\{V\otimes F^l\,|\,l\in\N\}
\end{equation*}
of central charges $c+\sfrac{l}{2}$ and with $\dim((V\otimes F^l)_{1/2})=l$ that grows from the common stump $V$, and each nice, self-dual \svoa{} lives in exactly one such family.


\subsection{Neighbourhood Graph Method}\label{sec:graphmeth}

Specialising the results of \autoref{sec:holcat} to central charge a multiple of $8$, we arrive at the first main classification method used in this text, based on the concept of $2$-neighbourhood ($\Z_2$-orbifold construction). This generalises Kneser $2$-neighbours for lattices, which can be used to classify the positive-definite, odd, unimodular lattices of rank (at most) $24$ \cite{Bor85} (see also Chapter~17 in \cite{CS99}).

We briefly review the neighbourhood graph method for lattices. Recall that two positive-definite, even, unimodular lattices $\{M_1,M_2\}$ (in the same ambient vector space $M_1\otimes_\Z\R=M_2\otimes_\Z\R$, necessarily of dimension or rank a multiple of $8$) are called \emph{(Kneser) $2$-neighbours} if their intersection $M_1\cap M_2$ has index~$2$ in one (and hence both) of them \cite{Kne57} (see also, e.g., \cite{Bor85,CS99,CL19}). Then $K\coloneqq M_1\cap M_2$ is even and has discriminant form $K'/K\cong 2_{\II}^{+2}$.

The structure of $K'/K$ entails that $K$ has exactly three non-trivial extensions to integral lattices, namely the two even, unimodular lattices $M_1$ and $M_2$, corresponding to the two elements $\gamma_1,\gamma_2\in K'/K$ of order~$2$ and norm $0\pmod{1}$, and further an odd, unimodular lattice $L$ with $L_\text{ev}=K$, corresponding to the element $\gamma_3\in K'/K$ of order~$2$ and norm $\sfrac{1}{2}\pmod{1}$.

In particular, it follows that the $2$-neighbours $\{M_1,M_2\}$ are uniquely determined by the positive-definite, even lattice $K$ with $K'/K\cong 2_{\II}^{+2}$ (and vice versa), and every lattice $K$ with these properties, which can only exist if $8\,|\,\rk(K)$, specifies a pair of $2$-neighbours.

The $2$-neighbours $\{M_1,M_2\}$ are also uniquely determined by the positive-definite, odd, unimodular lattice $L$ (and vice versa), and every such lattice, now assuming that $8\,|\,\rk(L)$, defines a pair of $2$-neighbours. Indeed, the even sublattice $K\coloneqq L_\text{ev}$ of $L$ must have discriminant form $K'/K\cong 2_{\II}^{+2}$ and has exactly two non-trivial extensions to even lattices $M_1$ and $M_2$.

The situation is depicted in \autoref{fig:neighbourhood1}.
\begin{figure}[ht]
\caption{The notion of $2$-neighbourhood for positive-definite, even, unimodular lattices.}
~\\
\begin{tikzcd}[ampersand replacement=\&]
M_1\arrow[-,dashed]{r}\& L\arrow[-,dashed]{r}\& M_2\&\text{unimodular}\\
\\
\& K=L_\text{ev}\arrow[hookrightarrow]{uul}{\text{ind.\ 2}}\arrow[hookrightarrow]{uu}[pos=0.7]{\text{ind.\ 2}}\arrow[hookrightarrow]{uur}[swap]{\text{ind.\ 2}} \&\& {|K'/K|=4}
\end{tikzcd}
\label{fig:neighbourhood1}
\end{figure}
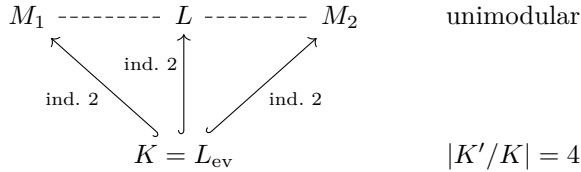

In the end, we only want to consider lattices up to isomorphism. This leads to the notion of the \emph{2-neighbourhood graph}. Fix the rank $d\in8\N$. The finitely many isomorphism classes of positive-definite, even, unimodular lattices are the \emph{nodes} of this graph. An \emph{edge} is drawn between two (possibly identical) nodes, i.e.\ two isomorphism classes of positive-definite, even, unimodular lattices $M_1$ and $M_2$, if there is a lattice $K$ with $K'/K\cong 2_{\II}^{+2}$ such that the two non-trivial even extensions of $K$ are isomorphic to $M_1$ and $M_2$.

Note that the lattice $K$ is not necessarily uniquely determined up to isomorphism by $M_1$ and $M_2$, meaning that nodes may be joined by multiple (but finitely many) edges, each labelled by a different isomorphism class of lattice $K$.

\begin{prop}
Let $d\in8\N$. There is a bijection between the isomorphism classes of positive-definite, odd, unimodular lattices $L$ of rank $d$ and the edges in the 2-neighbourhood graph in that rank, labelled by the isomorphism classes of positive-definite, even lattices $K$ with $K'/K\cong 2_{\II}^{+2}$.
\end{prop}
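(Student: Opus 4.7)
The plan is to exhibit mutually inverse maps $\phi$ and $\psi$ between the set of labelled edges and the isomorphism classes of positive-definite, odd, unimodular lattices of rank $d$, using the structural analysis that precedes the proposition.

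First I would define $\phi$ on labelled edges: given an edge whose label is the isomorphism class of some positive-definite, even lattice $K$ with $K'/K\cong 2_{\II}^{+2}$, the discriminant form contains a unique non-trivial element $\gamma_3$ of order~$2$ and norm $\sfrac{1}{2}\pmod{1}$, and adjoining the corresponding coset to $K$ yields a positive-definite, odd, unimodular lattice $L$ of rank $d$ with $L_{\text{ev}}=K$. Set $\phi(\text{edge}):=[L]$. Conversely, define $\psi([L])$ by taking $K:=L_{\text{ev}}$, which is a positive-definite, even lattice of rank $d$ with $K'/K\cong 2_{\II}^{+2}$, and using the two elements of order~$2$ and norm $0\pmod{1}$ in $K'/K$ to produce the two even unimodular extensions $M_1$, $M_2$ of $K$; put $\psi([L])$ equal to the edge between $[M_1]$ and $[M_2]$ labelled by $[K]$.

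Next I would verify that both maps are well defined on isomorphism classes. Any lattice isomorphism $K_1\to K_2$ descends to an isomorphism of discriminant forms $(K_1)'/K_1\to (K_2)'/K_2$, and an isomorphism of $2_{\II}^{+2}$ must send the unique norm-$\sfrac{1}{2}$ order-$2$ element to its counterpart (making $L$ well defined up to isomorphism) while it may permute the two norm-$0$ order-$2$ elements (which only permutes the pair $\{[M_1],[M_2]\}$, leaving the unordered edge intact). Mutual inverseness then follows because $L_{\text{ev}}=K$ by construction of $L$, so $\phi\circ\psi=\id$, and because the odd extension of $L_{\text{ev}}$ corresponding to the norm-$\sfrac{1}{2}$ element is $L$ itself, so $\psi\circ\phi=\id$.

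I do not expect any genuine obstacle here; the entire content — namely that $K'/K\cong 2_{\II}^{+2}$ has precisely one non-trivial order-$2$ element of norm $\sfrac{1}{2}$ and exactly two of norm $0$, and that each such element corresponds bijectively to a non-trivial integral unimodular extension of $K$ — has been recorded in the discussion immediately preceding the proposition and in \autoref{sec:latsvoa}. Once these facts are in hand, the bijection is essentially a repackaging of the data attached to the triple $(K,L,\{M_1,M_2\})$, and the only point requiring genuine care is the bookkeeping around automorphisms of the discriminant form $2_{\II}^{+2}$ described above.
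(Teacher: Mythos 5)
Your proof is correct and takes essentially the same route as the paper, which obtains the bijection from exactly the facts you cite (recorded just before the proposition and in the lattice section): the unique order-two element of norm $\sfrac{1}{2}$ in $2_{\II}^{+2}$ gives the map $K\mapsto L$, and $L\mapsto L_\text{ev}$ is its inverse, with isomorphisms of $K$ necessarily fixing that distinguished element. The only slip is cosmetic: your two justifications for mutual inverseness are attached to the wrong compositions --- $L_\text{ev}=K$ by construction proves $\psi\circ\phi=\id$ on edges, while the fact that the odd extension of $L_\text{ev}$ at the norm-$\sfrac{1}{2}$ element recovers $L$ proves $\phi\circ\psi=\id$ on lattices.
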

The edges $K$ incident with a node $M$ correspond to the $\O(M)$-orbits of vectors~$v$ of norm $\langle v,v\rangle/2=0\pmod{2}$ in $M/(2M)\setminus\{0\}$, with the lattice $K$ given by $K=\{x\in M\,|\,\langle v,x\rangle=0\pmod{2}\}$. The edges are of three different types: the two ends $M=M_1$ and $M_2$ can be (1) non-isomorphic, or the two lattices are isomorphic but the ends belong either to (2)~different or to (3)~the same $\O(M)$-orbit.

This can be used to classify the positive-definite, odd, unimodular lattices of rank $24$ \cite{Bor85,CS99}. The neighbourhood graph for $d=8$, $16$ and $24$ is shown in Figure~17.1 of \cite{CS99}. There, only the edges are drawn for which the positive-definite, odd, unimodular lattice $L$ does not contain vectors of norm~$\sfrac{1}{2}$, i.e.\ does not split off copies of the standard lattice $\Z$ (see last paragraph of \autoref{sec:latsvoa}). All of the not drawn edges must be loops (cf.\ \autoref{prop:suffloop} below). For $d\leq 24$, there are only edges of type (1) and (3), but it is expected that edges of type (2) exist for large enough $d$.\footnote{Richards Borcherds, personal communication.}

\medskip

We now describe the analogous result for nice, self-dual \svoa{}s $V$. To this end we generalise the notion of $2$-neighbourhood to nice, self-dual \voa{}s, which is nothing but the cyclic orbifold construction of order~$2$ \cite{FLM88,EMS20a} (cf.\ \cite{GJ22}).

Two nice, self-dual \voa{}s $\{W^{(1)},W^{(2)}\}$, of some central charge $c\in8\N$, are called \emph{2-neighbours} if they are different (but possibly isomorphic as \voa{}s) $\Z_2$-extensions (i.e.\ simple-current extensions) of the same nice \voa{} $U$. This implies that $\Rep(U)$ is the modular tensor category $\mathcal{C}(2_{\II}^{+2})$ associated with the discriminant form $2_{\II}^{+2}$.

As we just saw, the latter has three non-zero elements $\gamma_1$, $\gamma_2$ and $\gamma_3$, all of order $2$ with norm $0$, $0$ and $\sfrac{1}{2}\pmod{1}$, respectively. We denote the irreducible $U$-modules by $W^\gamma$, $\gamma\in 2_{\II}^{+2}$. Then $U=W^0$. The simple-current extensions of order~$2$ corresponding to the elements $\gamma_1$, $\gamma_2$ and $\gamma_3$ form the two self-dual \voa{}s $W^{(1)}=W^0\oplus W^{\gamma_1}$ and $W^{(2)}=W^0\oplus W^{\gamma_2}$ (or vice versa) and the self-dual \svoa{} $V=W^0\oplus W^{\gamma_3}$ with $V^{\bar0}=U$, respectively.

Again, the $2$-neighbours $\{W^{(1)},W^{(2)}\}$ are uniquely determined by the nice \voa{} $U$ with $\Rep(U)\cong\mathcal{C}(2_{\II}^{+2})$, and every such \voa{}, necessarily of central charge $c\in8\N$, specifies a pair of $2$-neighbours.

Similarly, the $2$-neighbours $\{W^{(1)},W^{(2)}\}$ are also determined by the nice, self-dual \svoa{} $V$, and every such \svoa{}, now assumed to be of central charge $c\in8\N$, defines a pair of $2$-neighbours. Indeed, the representation category of the even part $U\coloneqq V^{\bar0}$ of a nice \svoa{} $V$ was described in \autoref{sec:holcat} and depends on the central charge $c$ modulo $8$. The case of $8\,|\,c$ is quite special. Here, $\Rep(U)$ is the modular tensor category $\mathcal{C}(2_{\II}^{+2})$ associated with the discriminant form $2_{\II}^{+2}$ and we just described its non-trivial simple-current extensions.

The situation is depicted in \autoref{fig:neighbourhood2} (compare with \autoref{fig:neighbourhood1}).
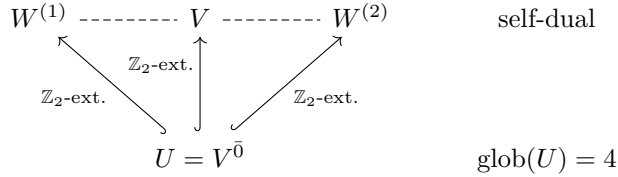
\begin{figure}[ht]
\caption{The notion of $2$-neighbourhood for nice, self-dual \voa{}s.}
~\\
\begin{tikzcd}[ampersand replacement=\&]
W^{(1)}\arrow[-,dashed]{r}\& V\arrow[-,dashed]{r}\& W^{(2)}\&\text{self-dual}\\
\\
\& U=V^{\bar0}\arrow[hookrightarrow]{uul}{\Z_2\text{-ext.}}\arrow[hookrightarrow]{uu}[pos=0.7]{\Z_2\text{-ext.}}\arrow[hookrightarrow]{uur}[swap]{\Z_2\text{-ext.}} \&\& {\glob(U)=4}
\end{tikzcd}
\label{fig:neighbourhood2}
\end{figure}

We now define the \emph{2-neighbourhood graph} in central charge $c\in8\N$. The \emph{nodes} are the isomorphism classes of nice, self-dual \voa{}s of central charge $c$. Conjecturally, the number of nodes is finite, but this is only known for $c=0$, $8$, $16$ and almost known for $c=24$, where the uniqueness of the moonshine module is still open.

There is an \emph{edge} between two (possibly identical) nodes $W^{(1)}$ and $W^{(2)}$ if there is a nice \voa{} $U$ with $\Rep(U)\cong\mathcal{C}(2_{\II}^{+2})$ whose two non-trivial simple-current extensions to \voa{}s are isomorphic to $W^{(1)}$ and $W^{(2)}$, and we label that edge by the isomorphism class of $U$. Conjecturally, the number of edges between two nodes is finite, and this is known to be true for $c=0$, $8$, $16$ and $24$ since the automorphism groups of the self-dual \voa{}s corresponding to the nodes are sufficiently understood \cite{HM22,BLS23}. For the possibly existent fake copies $V$ of $V^\natural$, it follows from character calculations (see \autoref{rem:fakeshort}) that the only involutions are Miyamoto involutions for Ising vectors inside $V_2$ corresponding to loops (see below). As shown by Conway~\cite{Con85}, there can be only finitely many of these.

We obtain the following correspondence:
\begin{prop}[Neighbourhood Graph Method]\label{prop:neighbourhood}
Let $c\in8\N$. There is a bijection between the isomorphism classes of nice, self-dual \svoa{}s $V$ of central charge $c$ and the edges in the 2-neighbourhood graph in that central charge, labelled by the isomorphism classes of nice \voa{}s with representation category $\mathcal{C}(2_{\II}^{+2})$.
\end{prop}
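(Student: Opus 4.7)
The plan is to read off the bijection directly from the description of $\Rep(V^{\bar0})$ for a nice, self-dual \svoa{} $V$ given in \autoref{sec:holcat}. The forward map would assign to (the isomorphism class of) such a $V$ of central charge $c\in8\N$ the edge constructed as follows: set $U:=V^{\bar0}$, which is a nice \voa{} with $\Rep(U)\cong\mathcal{C}(2_{\II}^{+2})$ by the $c\equiv 0\pmod 8$ line of \autoref{table:16}. The discriminant form $2_{\II}^{+2}$ has exactly three non-zero elements $\gamma_1,\gamma_2,\gamma_3$, all of order $2$, with norms $0$, $0$ and $\sfrac{1}{2}\pmod{1}$, respectively. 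The corresponding simple-current extensions yield two nice, self-dual \voa{}s $W^{(i)}:=U\oplus W^{\gamma_i}$ for $i=1,2$ together with $V=U\oplus W^{\gamma_3}$ itself. I would then send $[V]$ to the edge $\{[W^{(1)}],[W^{(2)}]\}$ labelled by $[U]$.

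For the reverse direction, given an edge between nodes $[W^{(1)}],[W^{(2)}]$ labelled by an isomorphism class $[U]$ of a nice \voa{} with $\Rep(U)\cong\mathcal{C}(2_{\II}^{+2})$, I would take the unique non-zero element $\gamma_3\in 2_{\II}^{+2}$ of norm $\sfrac{1}{2}\pmod{1}$ and form the simple-current extension $V:=U\oplus W^{\gamma_3}$. By the general theory of simple-current extensions this exists uniquely, and it is automatically a nice, self-dual \svoa{} (not a \voa{}) of central charge $c$: it is a \svoa{} because $W^{\gamma_3}$ has $L_0$-weights in $\sfrac{1}{2}+\Z$, and self-duality follows from the fact that extending by the entire discriminant form $2_{\II}^{+2}$ trivialises $\Rep(U)$, so $V$ has no non-trivial irreducible modules.

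It then remains to verify that these two assignments are mutual inverses on isomorphism classes. Starting from $V$, the even part is recovered as $V^{\bar0}=U$ and the odd part $V^{\bar1}$ is forced to be $W^{\gamma_3}$ by its $L_0$-grading, so this composition is the identity. In the other order, the two bosonic $\Z_2$-extensions of $U$ are, as isomorphism classes, precisely $W^{(1)}$ and $W^{(2)}$, and the construction returns the labelled edge we started with. The step that will require the most care is well-definedness on isomorphism classes: an \svoa{} isomorphism $V\to V'$ must intertwine the canonical involutions $\sigma$ and hence restricts to a \voa{} isomorphism $V^{\bar0}\to V'^{\bar0}$; conversely, any isomorphism $U\to U'$ of the even parts matching the labels $\{[W^{(1)}],[W^{(2)}]\}$ lifts to an isomorphism of the odd $\Z_2$-extensions, the lift being unique up to the sign freedom in the identification of the irreducible simple current $W^{\gamma_3}$ that still yields an \svoa{} isomorphism. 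The unordered nature of the pair $\{[W^{(1)}],[W^{(2)}]\}$ and the possibility that an automorphism of $U$ (inducing an automorphism of the discriminant form of $\Rep(U)$) exchanges $\gamma_1$ and $\gamma_2$ is built into the definition of the graph, so no further ambiguity arises.
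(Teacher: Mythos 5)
Your proposal is correct and follows essentially the same route as the paper: identify $V^{\bar0}$ as a nice \voa{} with $\Rep(V^{\bar0})\cong\mathcal{C}(2_{\II}^{+2})$ (using the $c\equiv0\pmod 8$ case of \autoref{sec:holcat}), match the three non-zero elements of the discriminant form with the two bosonic extensions $W^{(1)},W^{(2)}$ and the odd extension $V$ itself, and observe that $V\mapsto V^{\bar0}$ and $U\mapsto U\oplus W^{\gamma_3}$ are mutually inverse on isomorphism classes, which is exactly the content of the discussion preceding the proposition in the paper. (Only a cosmetic remark: self-duality of $U\oplus W^{\gamma_3}$ is better justified by noting $\gamma_3^\bot=\langle\gamma_3\rangle$ in $2_{\II}^{+2}$, rather than by ``extending by the entire discriminant form''.)
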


We shall see that many edges in the $2$-neighbourhood graph are loops, i.e.\ that the two nodes $W^{(1)}\cong W^{(2)}$ are isomorphic. There is a sufficient (but in general not necessary) criterion for loops \cite{Hoe95}:
\begin{prop}\label{prop:suffloop}
In the situation of \autoref{prop:neighbourhood}, if the self-dual \svoa{} $V$ satisfies $\dim(V_{1/2})>0$, then it corresponds to a loop.
\end{prop}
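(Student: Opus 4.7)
The plan is to exploit the free-fermion splitting of Proposition~\ref{prop:split}: since $\dim(V_{1/2}) > 0$, we can write $V \cong F \otimes V'$ where $V'$ is itself a nice \svoa{} of central charge $c - \sfrac{1}{2}$ (it cannot be a pure \voa{} because $c - \sfrac{1}{2} \notin \Z$). I would then study $U = V^{\bar{0}}$ relative to the sub\voa{} $A := F^{\bar{0}} \otimes (V')^{\bar{0}}$.

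First observe that $U$ is a $\Z_2$-simple-current extension $U = A \oplus J$ with $J := F^{\bar{1}} \otimes (V')^{\bar{1}}$, a simple current of $A$ of integer conformal weight. Next, using that the representation categories of $F^{\bar{0}}$ and $(V')^{\bar{0}}$ are both Ising-like (three simples $[0], [\sfrac{1}{2}], (\cdot)$, with the non-simple-current object being the unique fixed point of $[\sfrac{1}{2}]$ under fusion), I would enumerate the irreducible $U$-modules via the standard local-module/orbit/fixed-point recipe for simple-current extensions. The output is exactly four irreducibles, matching $\Rep(U) \cong \mathcal{C}(2_{\II}^{+2})$: the unit $U$; the odd part $V^{\bar{1}}$ (this is $\gamma_3$), of weight in $\sfrac{1}{2}+\Z$; and two integer-weight simple currents $\gamma_1, \gamma_2$ arising from splitting the $J$-fixed irreducible $A$-module $(\sfrac{1}{16})_F \otimes T_{V'}$, where $T$ is the third irreducible $(V')^{\bar{0}}$-module (its weight is $\sfrac{1}{16} + (c - \sfrac{1}{2})/8 = c/8 \in \Z$ since $c \in 8\N$).

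The crucial observation is that the extension $U = A \oplus J$ carries a canonical quantum symmetry $\tau \in \Aut(U)$, defined to be $+1$ on $A$ and $-1$ on $J$. The two $U$-module structures on the underlying $A$-module $(\sfrac{1}{16})_F \otimes T_{V'}$ differ precisely by a sign in the $J$-action (the $A$-intertwiner $J \otimes M \to M$ is unique up to a non-zero scalar), so $\gamma_2 \cong \gamma_1 \circ \tau$ as $U$-modules. Then the linear map $\tau \oplus \id_{\gamma_1} : U \oplus \gamma_1 \to U \oplus \gamma_2$ is a \voa{} isomorphism $W^{(1)} \cong W^{(2)}$, realising the edge of the $2$-neighbourhood graph as a loop.

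The hard part will be rigorously establishing $\gamma_2 \cong \gamma_1 \circ \tau$ and verifying that $\tau \oplus \id_{\gamma_1}$ is genuinely a \voa{} isomorphism rather than merely a $U$-module one. Both reduce to careful tracking of phase conventions in the intertwining operators that build the extension, which is essentially the standard content of the quantum-symmetry description of abelian simple-current orbifolds applied to a fixed-point module.
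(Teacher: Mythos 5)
Your proposal is correct and starts from the same place as the paper's proof (split off a fermion via \autoref{prop:split} and view everything over the dual pair $A=L(\sfrac{1}{2},0)\otimes (V')^{\bar0}$), but it closes the argument by a different mechanism. The paper notes that $\Rep(L(\sfrac{1}{2},0))$ and $\Rep((V')^{\bar0})$ are braid-reversed equivalent and invokes the theory of mirror extensions to conclude that \emph{any} self-dual \voa{} extension of $A$ must have the diagonal decomposition $M(0)\otimes L(\sfrac{1}{2},0)\oplus M(\sfrac{1}{2})\otimes L(\sfrac{1}{2},\sfrac{1}{2})\oplus M(\sfrac{15}{16})\otimes L(\sfrac{1}{2},\sfrac{1}{16})$, whence $W^{(1)}\cong W^{(2)}$; the uniqueness of the \voa{} structure on this decomposition is absorbed into that citation. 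You instead work one level up, with $U=V^{\bar0}$ as the $\Z_2$-simple-current extension $A\oplus J$, identify $\gamma_1,\gamma_2$ as the two lifts of the $J$-fixed $A$-module $L(\sfrac{1}{2},\sfrac{1}{16})\otimes M(\sfrac{15}{16})$ (your enumeration and weight bookkeeping are correct), and use the quantum symmetry $\tau$ of the extension to show $\gamma_2\cong\gamma_1\circ\tau$, so that the two self-dual extensions of $U$ are exchanged by an automorphism of $U$. This is a legitimate alternative: it is more explicit about \emph{why} the two neighbours coincide (they differ by the dual $\Z_2$-symmetry of the fermion extension), and it is essentially the same fact the paper exploits elsewhere when it asserts that for a split fermion the involution and its inverse-orbifold automorphism are conjugate Miyamoto involutions, so case (3) of \autoref{prop:order2conj} holds. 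The only point to tighten is the one you flag: $\tau\oplus\id_{\gamma_1}$ need not be a \voa{} map on the nose, since the $M\otimes M\to U$ components of $W^{(1)}$ and $W^{(2)}$ live in one-dimensional spaces of intertwining operators and may differ by a non-zero scalar after transport by $\tau$; either rescale the map on the $\gamma_1$-summand by a square root of that scalar, or avoid the explicit map altogether by citing the uniqueness up to isomorphism of the simple-current extension attached to an isotropic subgroup (as in the paper's \autoref{sec:mtc}/\autoref{sec:evensub} framework), noting that $\bar\tau$ carries $\{0,\gamma_1\}$ to $\{0,\gamma_2\}$. With that standard repair your argument is complete.
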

\begin{proof}
Let $c\in8\Ns$ denote the central charge of $V$. If $\dim(V_{1/2})>0$, then $V$ splits off a non-trivial tensor power of the Clifford \svoa{}~$F$ (see \autoref{prop:split}). In particular, $V\cong V'\otimes F$ where $V'$ is a nice, self-dual \svoa{} of central charge $c-\sfrac{1}{2}$. The two $2$-neighbour \voa{}s $W^{(1)}$ and $W^{(2)}$ must both be extensions of $V^{\bar0}\cong(V'\otimes F)^{\bar0}$ and hence also extensions of $(V')^{\bar0}\otimes F^{\bar0}\cong (V')^{\bar0}\otimes L(\sfrac{1}{2},0)$.

The representation categories of $(V')^{\bar0}$ and $L(\sfrac{1}{2},0)$ are described in \autoref{sec:holcat}, and they are braid-reversed equivalent. Let $M(0)=(V')^{\bar0}$, $M(\sfrac{1}{2})$, $M(\sfrac{15}{16})$ and $L(\sfrac{1}{2},0)$, $L(\sfrac{1}{2},\sfrac{1}{2})$, $L(\sfrac{1}{2},\sfrac{1}{16})$ denote the irreducible $(V')^{\bar0}$- and $L(\sfrac{1}{2},0)$-modules, respectively, which are labelled by their lowest $L_0$-weights (modulo~$1$ for $V'$). By the theory of mirror extensions \cite{CKM22,Lin17}, both $W^{(1)}$ and $W^{(2)}$, since they are self-dual, are \voa{} extensions of the form
$
W^{(i)}\cong M(0)\otimes L(\sfrac{1}{2},0)$
$\oplus\,M(\sfrac{1}{2})\otimes L(\sfrac{1}{2},\sfrac{1}{2})\,\oplus\,M(\sfrac{15}{16})\otimes L(\sfrac{1}{2},\sfrac{1}{16})
$
and are hence isomorphic.
\end{proof}

\medskip

Finally, we reformulate \autoref{prop:neighbourhood} in the language of orbifold theory. Two nice, self-dual \voa{}s $W^{(1)}$ and $W^{(2)}$ being $2$-neighbours is equivalent to $W^{(2)}$ being an orbifold construction of order~$2$ of $W^{(1)}$ \cite{FLM88,EMS20a} (or equivalently vice versa). Indeed, setting $g|_{W^0}=\id$ and $g|_{W^{\gamma_1}}=-\id$ (with the notation from above) defines an automorphism $g$ of $W^{(1)}$ of order~$2$ and type~$0$ satisfying the positivity condition (see \cite{EMS20a} for a definition of these terms). The \fpvosa{} is $U=W^0$ and the corresponding cyclic orbifold construction is $(W^{(1)})^{\orb(g)}=W^{(2)}$. The inverse-orbifold automorphism on $W^{(2)}$ is defined by $g|_{W^0}=\id$ and $g|_{W^{\gamma_2}}=-\id$.

This implies the following correspondence, with a slight complication arising because, in the case of a loop, an automorphism may or may not be conjugate to its own inverse-orbifold automorphism.
\begin{prop}\label{prop:order2conj}
Let $c\in8\N$. There is a surjective map from the conjugacy classes $g$ of automorphisms of order~2 and type~0 of the isomorphism classes $W^{(1)}$ of nice, self-dual \voa{}s of central charge~$c$ to the isomorphism classes of nice, self-dual \svoa{}s of that central charge. It maps $g\in\Aut(W^{(1)})$ to the unique self-dual \svoa{} extending $(W^{(1)})^g$. This map is 2-to-1 if
\begin{enumerate}
\item $W^{(1)}\ncong W^{(2)}$ or
\item $W^{(1)}\cong W^{(2)}$ and $g$ is not conjugate to its inverse-orbifold automorphism
\end{enumerate}
and injective if
\begin{enumerate}
\item[(3)] $W^{(1)}\cong W^{(2)}$ and $g$ is conjugate to its inverse-orbifold automorphism
\end{enumerate}
where $W^{(2)}\coloneqq(W^{(1)})^{\orb(g)}$.
\end{prop}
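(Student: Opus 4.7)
My strategy is to leverage the bijection from \autoref{prop:neighbourhood} between isomorphism classes of nice, self-dual \svoa{}s of central charge $c$ and (labelled) edges of the $2$-neighbourhood graph, and to show that each edge is parametrised by the canonical $\Z_2$-grading involutions at its two endpoints.

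First I would verify that the map is well defined. For $g \in \Aut(W^{(1)})$ of order~$2$, type~$0$ and satisfying positivity, the cyclic orbifold construction produces a nice, self-dual \voa{} $W^{(2)} := (W^{(1)})^{\orb(g)}$, and the fixed-point \vosa{} $U := (W^{(1)})^g$ is nice with $\Rep(U) \cong \mathcal{C}(2_{\II}^{+2})$ by the discussion preceding \autoref{prop:neighbourhood}. Hence $U$ admits a unique nice, self-dual \svoa{} extension, which is the image of $(W^{(1)}, [g])$ under the map. Surjectivity is then immediate from \autoref{prop:neighbourhood}: given $V$, its corresponding edge has an endpoint $W^{(1)}$, and the canonical $\Z_2$-grading involution $g$ of $W^{(1)}$ over $V^{\bar{0}}$ is of order~$2$, type~$0$, satisfies positivity because $V$ does, and maps to $V$.

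For the fibre analysis I fix $V$ and let $e$ be the corresponding edge, with endpoints $W^{(1)}, W^{(2)}$ and label $U = V^{\bar{0}}$. The key claim is that at each endpoint $W^{(i)}$ there is exactly one $\Aut(W^{(i)})$-conjugacy class $[g_i]$ with $(W^{(i)})^{g_i} \cong U$ (whence automatically $(W^{(i)})^{\orb(g_i)} \cong W^{(j)}$ for $\{i,j\} = \{1,2\}$ by \autoref{prop:neighbourhood}). Indeed, any two embeddings of $U$ as a full \vosa{} of $W^{(i)}$ are related by an element of $\Aut(W^{(i)})$, by rigidity of the simple-current extension $W^{(i)} = U \oplus W^{\gamma_i}$, so the associated involutions lie in a single conjugacy class.

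The three cases then fall out. In case~(1), the two endpoints are non-isomorphic and the preimages $(W^{(1)}, [g_1])$ and $(W^{(2)}, [g_2])$ are distinct pairs, giving two preimages. In cases~(2) and~(3), pick any isomorphism $\phi \colon W^{(2)} \to W^{(1)}$; then $\phi^{-1} g_2 \phi \in \Aut(W^{(1)})$ is by construction the inverse-orbifold automorphism of $g_1$ transported back to $W^{(1)}$. The two pairs coincide in the domain iff $[g_1] = [\phi^{-1} g_2 \phi]$ in $\Aut(W^{(1)})$, which is precisely the condition that $g_1$ be conjugate to its inverse-orbifold automorphism; this dichotomy is independent of $\phi$ because any other isomorphism $W^{(2)} \to W^{(1)}$ differs from $\phi$ by post-composition with an element of $\Aut(W^{(1)})$, which preserves conjugacy classes. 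The principal obstacle I anticipate is the uniqueness claim at each endpoint, which reduces to rigidity of the simple-current extension $U \hookrightarrow W^{(i)}$; a minor subtlety is checking that the case-(2)-versus-case-(3) dichotomy is well-defined, which is a short verification using invariance of conjugacy under inner automorphisms.
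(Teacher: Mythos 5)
Your key claim --- that at each endpoint $W^{(i)}$ there is exactly one $\Aut(W^{(i)})$-conjugacy class of order-$2$ automorphisms with fixed-point subalgebra isomorphic to $U$ --- is false, and it is refuted by the very statement you are proving. In case~(2) the edge is a loop, $W^{(1)}\cong W^{(2)}$, and the canonical involution $g_1$ and the inverse-orbifold automorphism transported back to $W^{(1)}$ are two \emph{non-conjugate} involutions whose fixed-point subalgebras are both isomorphic to $U$; the remark immediately following the proposition says exactly this (``in case~(2) it determines two distinct ones''), and at $c=24$ this situation genuinely occurs for one loop. The ``rigidity of the simple-current extension'' you invoke begs the question: an abstract isomorphism $\psi\colon U\to U'$ between two such fixed-point subalgebras of $W^{(i)}$ extends to an automorphism of $W^{(i)}$ only if the induced map on the discriminant form $2_{\II}^{+2}$ carries the isotropic element defining the extension $U\subseteq W^{(i)}$ to the one defining $U'\subseteq W^{(i)}$, and whether some isomorphism does so is precisely the case~(2)-versus-(3) dichotomy, not an a priori rigidity. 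Your argument is also internally inconsistent: if the key claim held, then in every loop case $[g_1]$ and the transported $[g_2]$ would coincide (both fixed-point subalgebras are $\cong U$), so case~(2) could never occur and the map would be injective on all loops, contradicting the 2-to-1 assertion you set out to prove in case~(2).

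What the fibre analysis actually needs is the weaker statement that any order-$2$, type-$0$ automorphism $h$ of $W^{(i)}$ with $(W^{(i)})^h\cong U$ is conjugate either to $g_i$ or, in the loop case, to the transported inverse-orbifold automorphism. This can be proved along the lines you gesture at, but run correctly: the two self-dual \voa{} extensions of $(W^{(i)})^h$ are $W^{(i)}$ and $(W^{(i)})^{\orb(h)}$; choosing an isomorphism $\psi\colon U\to(W^{(i)})^h$, one asks which of the two norm-$0$ elements of $2_{\II}^{+2}$ the extension $(W^{(i)})^h\subseteq W^{(i)}$ corresponds to under $\bar\psi$. In one case $\psi$ extends to an automorphism of $W^{(i)}$ conjugating $g_i$ into $h$; in the other case one deduces $W^{(j)}\cong W^{(i)}$ and that $h$ is conjugate to the transported inverse-orbifold automorphism --- in particular this second case is impossible when $W^{(1)}\ncong W^{(2)}$, which is why the fixed-point subalgebra does determine a unique class in case~(1). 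With that completeness statement in place, the counting in your last paragraph (including the well-definedness of the dichotomy; note also the typo: for $\phi\colon W^{(2)}\to W^{(1)}$ the transported automorphism is $\phi g_2\phi^{-1}$, not $\phi^{-1}g_2\phi$) goes through, and it matches the paper's reformulation of the neighbourhood-graph bijection.
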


We remark that in cases (1) and (3) the isomorphism class of the \fpvosa{} $(W^{(1)})^g$ determines a unique conjugacy class in $\Aut(W^{(1)})$, while in case~(2) it determines two distinct ones.

We shall observe that for central charge $c=24$, there is only one edge of type~(2) (cf.\ the corresponding lattice statement above). One may view this as a consequence of the fact that, similar to Lie algebras, for automorphisms of small order the conjugacy class is often determined by the fixed-point subalgebra.

We also comment on \autoref{prop:suffloop}. If $V$ splits off at least one free fermion $F$, then the automorphism of order~$2$ on $W^{(1)}$ and the inverse-orbifold automorphism on $W^{(1)}\cong W^{(2)}$ are Miyamoto involutions \cite{Miy96} associated with the conformal vector of $F^{\bar0}\cong L(\sfrac{1}{2},0)$ (also called Ising vector in this context) and in fact are conjugate under the isomorphism $W^{(1)}\cong W^{(2)}$, meaning that case~(3) holds.

\medskip

To conclude, we have shown that in order to classify the nice, self-dual \svoa{}s of some central charge $c\in8\Ns$, and hence by \autoref{prop:holsplit} those of central charge less than or equal to $c$, it suffices to enumerate the edges in the $2$-neighbourhood graph of the nice, self-dual \voa{}s of central charge $c$ by studying all possible $\Z_2$-orbifold constructions.

This will be one of the two strategies used in this text to obtain a classification up to central charge $24$.


\subsection{Results in Small Central Charges}

As motivating examples, in this section we give the $2$-neighbourhood graphs up to central charge $16$. The corresponding classification of the nice, self-dual \svoa{}s (obtained independently in \cite{BKLTZ24}) is already a new result, with the classification up to central charge $12$ given in \cite{CDR18} and conjectured up to central charge $15\shs\sfrac{1}{2}$ in \cite{Hoe95} (but see \cite{KLT86} in the physics literature).

\medskip

By mapping a positive-definite, even, unimodular lattice $M$ to the corresponding lattice \voa{} $W=V_M$, the nodes of the lattice $2$-neighbourhood graph in rank $d\in8\N$ \cite{Bor85,CS99} form a subset of the nodes of the \voa{} $2$-neighbourhood graph of central charge $c=d$. Similarly, we map each edge labelled by a lattice $K$ with $K'/K\cong 2_{\II}^{+2}$ to the edge (of glueing type~I, see \autoref{sec:glueing}) labelled by the corresponding lattice \voa{} $U=V_K$. In total, this means that the lattice $2$-neighbourhood graph is a subgraph of the \voa{} $2$-neighbourhood graph.

We note that this subgraph need not be an induced subgraph, i.e.\ two lattice \voa{}s may have edges between them that do not come from an edge in the lattice graph. This happens exactly when the \voa{} $U$ labelling that edge is not isomorphic to a lattice \voa{} (in which case the edge has glueing type~III).


\begin{figure}[ht]
\caption{The $2$-neighbourhood graphs for the nice, self-dual \voa{}s of central charges $c=0$, $8$ and $16$.}
~\\
\begin{tikzpicture}[thick, main/.style = {draw, circle, minimum size=10mm, inner sep=0.05, fill=black!15}]
\node at (0.8,2.5) {$c=0$};
\node[main] (1) at (0.8,0) {$V_{\{0\}}$};
\node at (3,2.5) {$c=8$};
\node[main] (2) at (3,0) {$V_{E_8}$};
\draw (2) to [out=1*360/6,in=2*360/6,looseness=8] node[above] {$V_{D_8}$} (2);
\node at (8,2.5) {$c=16$};
\node[main] (3) at (6.8,0) {$V_{E_8^2}$};
\node[main] (4) at (9.2,0) {$V_{D_{16}^+}$};
\draw (3) to node[above] {$V_{(D_8^2)^+}$} (4);
\draw (3) to [out=1*360/7,in=2*360/7,looseness=8] node[above] {$V_{(A_1^2E_7^2)^+}\quad$} (3);
\draw (3) to [out=3*360/7,in=4*360/7,looseness=8] node[left] {$V_{D_8E_8}$} (3);
\draw [dashed] (3) to [out=5*360/7,in=6*360/7,looseness=8] node[below] {$(V_{E_8}^{\otimes2})^{S_2}\quad$} (3);
\draw (4) to [out=1*360/7+180,in=2*360/7+180,looseness=8] node[below] {$\quad V_{(A_1(2)A_{15})^{++}}$} (4);
\draw (4) to [out=3*360/7+180,in=4*360/7+180,looseness=8] node[right] {$V_{D_{16}}$} (4);
\draw (4) to [out=5*360/7+180,in=6*360/7+180,looseness=8] node[above] {$\quad V_{(D_4D_{12})^+}$} (4);
\end{tikzpicture}
\label{fig:small}
\end{figure}
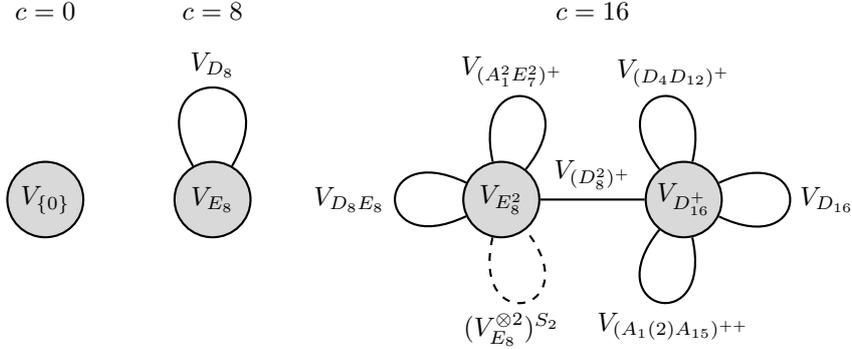

In \autoref{fig:small}, we present the \voa{} $2$-neighbourhood graphs of central charges $c=0$, $8$ and $16$. We do not give any details of the computations as these will be discussed in detail in the case of central charge $c=24$, and moreover the classification result is an immediate consequence of the one for $c=24$ (see \autoref{sec:results}) by splitting off free fermions. The graphs for $c=0$ and $8$ are identical to the lattice graphs for ranks $d=0$ and $8$, i.e.\ they have the same nodes and edges. (But recall that certain loops were omitted from the lattice graphs depicted in \cite{CS99}.) For $c=16$, all nodes are lattice nodes, but there is one extra edge compared to the lattice graph (see also \autoref{table:numbers}).

We explain the notation. The lattices are either root lattices, like $E_8$ or $D_8E_8$, or even, small-index extensions of root lattices, like $D_{16}^+$ or $(D_4D_{12})^+$. The plus sign denotes an index-$2$ extension (except for $A_{15}^+$, see below). This notation is in general not well-defined, but all lattices here are uniquely determined from context. The lattice $A_1(2)$ denotes the root lattice $A_1$ with the bilinear form scaled by $2$. The \voa{} for the trivial lattice $V_{\{0\}}$ is nothing but the finite-dimensional \voa{} $\C\vac$ spanned by the vacuum vector.

The only non-lattice edge (drawn as a dashed line) is labelled by the permutation orbifold $(V_{E_8}^{\otimes2})^{S_2}$, i.e.\ the \fpvosa{} of $V_{E_8^2}\cong V_{E_8}^{\otimes2}$ under the automorphism that permutes the two tensor factors.

\begin{table}[ht]
\caption{The nice, self-dual \svoa{}s of central charge $16$.}
\renewcommand{\arraystretch}{1.2}
\begin{tabular}{llllllr}
$V$ & $V_1$ & $V^{\bar0}$ & $W^{(1)}$ & $W^{(2)}$ & Type & $c_\text{st}~$\\\hline\hline
$F^{\otimes32}$&$D_{16}$&$V_{D_{16}}$&$V_{D_{16}^+}$&$V_{D_{16}^+}$&I& $0\ph$\\
$V_{D_{12}^+}\otimes F^{\otimes8}$&$D_4D_{12}$&$V_{(D_4D_{12})^+}$&$V_{D_{16}^+}$&$V_{D_{16}^+}$&I&$12\ph$\\
$V_{A_{15}^+}\otimes F^{\otimes2}$&$A_{15}\C$&$V_{(A_1(2)A_{15})^{++}}$&$V_{D_{16}^+}$&$V_{D_{16}^+}$&I&$15\ph$\\[2pt]\hline
$V_{(D_8^2)^{++}}$&$D_8^2$&$V_{(D_8^2)^+}$&$V_{D_{16}^+}$&$V_{E_{8}^2}$&I&$16\ph$\\[2pt]\hline
$V_{E_8}\otimes F^{\otimes16}$&$D_8E_8$&$V_{D_8E_8}$&$V_{E_{8}^2}$&$V_{E_{8}^2}$&I&$8\ph$\\
$V_{(E_7^2)^+}\otimes F^{\otimes4}$&$A_1^2E_7^2$&$V_{(A_1^2E_7^2)^+}$&$V_{E_{8}^2}$&$V_{E_{8}^2}$&I & $14\ph$\\
$\bar{V}\otimes F$&$E_{8,2}$&$(V_{E_8}^{\otimes2})^{S_2}$&$V_{E_{8}^2}$&$V_{E_{8}^2}$&III&$15\shs\sfrac{1}{2}$
\end{tabular}
\label{table:small}
\end{table}

The corresponding classification of the nice, self-dual \svoa{}s of central charge $c=16$ is given in \autoref{table:small}. We list the nice, self-dual \svoa{} $V$ (written as the stump times a certain tensor power of $F$), the weight-$1$ Lie algebra $V_1$ (together with the affine levels, see \autoref{sec:cartan} for the notation), its even part $V^{\bar0}$, the two nice, self-dual \voa{}s $W^{(1)}$ and $W^{(2)}$ extending $V^{\bar0}$ and the central charge $c_\text{st}$ of the stump of $V$. The meaning of the glueing type (here I or III) will be explained in \autoref{sec:glueing}.

We remark that the names for the positive-definite, odd, unimodular lattices $(E_7^2)^+$, $D_{12}^+$ and $A_{15}^+$ follow standard convention. The same is true for the positive-definite, even, unimodular lattice $D_{16}^+$. In the last row, $\bar{V}$ denotes the unique nice, self-dual \svoa{} of central charge $c=15\shs\sfrac{1}{2}$ without vectors of $L_0$-weight $\sfrac{1}{2}$. Its even part $\bar{V}^{\bar0}\cong L_{E_8}(2,0)$ is the simple affine \voa{} for $E_8$ at level $2$.

\begin{prop}[Classification, Preliminary]
Let $V$ be a nice, self-dual \svoa{} of central charge $c=8$ or $16$. Then $V$ is isomorphic to one of the following:
\begin{enumerate}
\item[$c=8$:] $F^{\otimes16}$,
\item[$c=16$:] the seven non-isomorphic \svoa{}s listed in \autoref{table:small}.
\end{enumerate}
\end{prop}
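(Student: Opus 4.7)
The plan is to invoke the $2$-neighbourhood method (\autoref{prop:neighbourhood}), which puts nice, self-dual \svoa{}s of central charge $c\in 8\N$ in bijection with the edges of the $2$-neighbourhood graph of nice, self-dual \voa{}s of the same central charge. For $c=8$ and $c=16$ this graph is exhibited in \autoref{fig:small}, so the problem reduces to (i) reading off the \svoa{} attached to each edge and (ii) verifying that \autoref{fig:small} is the complete graph.

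Step~(i) is essentially formal: an edge labelled by a nice \voa{} $U$ with $\Rep(U)\cong\mathcal{C}(2_{\II}^{+2})$ produces the unique non-even simple-current extension of $U$, namely the one along the element of $U'/U$ of norm $\sfrac{1}{2}\pmod 1$. If $U=V_K$ is a lattice \voa{}, this extension is the lattice \svoa{} $V_L$ for the unique positive-definite, odd, unimodular lattice $L$ with $L_\text{ev}=K$; splitting off its norm-$\sfrac{1}{2}$ vectors via \autoref{prop:split} rewrites it in the ``stump $\otimes\,F^{\otimes l}$'' form used in \autoref{table:small}. At $c=8$ the unique edge $V_{D_8}$ thereby yields $V_{\Z^8}\cong F^{\otimes 16}$, and at $c=16$ the six lattice edges give the six lattice-type rows of \autoref{table:small}. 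For the remaining non-lattice edge $(V_{E_8}^{\otimes 2})^{S_2}$, the Virasoro sub\voa{} $L(\sfrac{1}{2},0)$ is present, so the attached \svoa{} splits off at least one free fermion; the mirror-extension computation from the proof of \autoref{prop:suffloop} then identifies it as $\bar V\otimes F$ with $\bar V$ the unique $c=15\shs\sfrac{1}{2}$ stump (namely $L_{E_8}(2,0)$ extended by its order-$2$ simple current).

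Step~(ii) is the genuine work. By \autoref{prop:order2conj} the edges incident to a node $W$ correspond, up to identification of an involution with its inverse-orbifold, to conjugacy classes of order-$2$ automorphisms of $W$ of type~$0$ with positive orbifold. I would enumerate these using the explicit descriptions of the automorphism groups of $V_{E_8}$, $V_{E_8^2}\cong V_{E_8}^{\otimes 2}$ (including the outer tensor-factor permutation) and $V_{D_{16}^+}$, and compute the fixed-point sub\voa{} in each case. The lattice-type conjugacy classes then match Borcherds' $2$-neighbourhood classification \cite{Bor85,CS99} of positive-definite, odd, unimodular lattices of rank $\leq 16$, leaving only the non-lattice contributions, which arise from automorphisms involving the tensor-factor swap on $V_{E_8}^{\otimes 2}$ and yield $(V_{E_8}^{\otimes 2})^{S_2}$ as the only new fixed-point sub\voa{}.

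The main obstacle is therefore the combined finite check that every order-$2$ type-$0$ conjugacy class on the three nodes has been accounted for and that the ``twisted permutation'' classes on $V_{E_8}^{\otimes 2}$ all reduce to the single loop $(V_{E_8}^{\otimes 2})^{S_2}$ after accounting for inverse-orbifold equivalence. Once this is established, the total count of $1$ edge at $c=8$ and $7$ edges at $c=16$ matches \autoref{fig:small} and, via step~(i), the list in \autoref{table:small}.
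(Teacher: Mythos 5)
Your route is genuinely different from the paper's. The paper does not prove this proposition by computing anything at central charge $8$ or $16$: it observes that, by \autoref{prop:holsplit}, tensoring with $F^{\otimes l}$ is a bijection between nice, self-dual \svoa{}s of central charge $c$ and those of central charge $c+\sfrac{l}{2}$ with correspondingly enlarged weight-$\sfrac{1}{2}$ space, so the $c=8$ and $c=16$ lists are simply read off (as stumps with $c_\text{st.}\leq 16$) from the central charge $24$ classification of \autoref{thm:class}, \autoref{table:969} and \autoref{table:numbers}; the graphs in \autoref{fig:small} are presented without proof precisely because they are not needed for this. You instead propose to run the $2$-neighbourhood machinery directly at $c=8$ and $16$, which is legitimate and self-contained (it is essentially the route of \cite{BKLTZ23}), and your step (i), translating edges into \svoa{}s via the norm-$\sfrac{1}{2}$ simple current and odd unimodular lattices, is correct.

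The problem is step (ii). Along your route the completeness of \autoref{fig:small} \emph{is} the proposition, and you only announce it as a ``finite check'' rather than perform it: one must enumerate, for $V_{E_8}$, $V_{E_8}^{\otimes2}$ and $V_{D_{16}^+}$, all conjugacy classes of order-$2$ automorphisms of type~$0$ whose fixed-point subalgebra satisfies the positivity condition (inner \emph{and} non-inner, including ruling out non-inner classes on $V_{D_{16}^+}$ and sorting the swap-twisted classes on $V_{E_8}^{\otimes2}$), identify each orbifold construction, and decide which automorphisms are conjugate to their inverse-orbifold automorphisms before \autoref{prop:order2conj} yields the counts $1$ and $7$. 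Since the paper supplies this computation only for $c=24$, leaving it unexecuted is a genuine gap in your argument, not a citation to known material. Two smaller points: the inference that the presence of an Ising subalgebra $L(\sfrac{1}{2},0)$ in $(V_{E_8}^{\otimes2})^{S_2}$ forces the attached \svoa{} to split a free fermion is not immediate --- a weight-$\sfrac{1}{2}$ vector only appears after the mirror-extension (or character) identification of the extension as $\bar{V}\otimes F$, so the logic should run in that order; and you tacitly use that $V_{E_8}$, $V_{E_8^2}$ and $V_{D_{16}^+}$ exhaust the nodes, i.e.\ the known classification of nice, self-dual \voa{}s at $c=8,16$, which should be stated as an input.
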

The main result of this paper is the analogous statement for $c=24$, which is given in \autoref{thm:class} and \autoref{table:969}.


\subsection{Characters}\label{sec:chars}

In the following, we describe the characters of self-dual \svoa{}s (cf.\ \cite{Hoe95}), focusing in particular on the case of central charge $c=24$.

Given a nice, self-dual \svoa{} $V$ of central charge $c$, the vector-valued character of its even part $V^{\bar0}$ is
\begin{equation*}
\Ch_{V^{\bar0}}(q)=\!\!\sum_{W\in\Irr(V^{\bar0})}\!\!\ch_W(q)\ee_W\quad\text{with}\quad\ch_W(q)=\tr_Wq^{L_0-c/24}
\end{equation*}
where $\Irr(V^{\bar0})$ is the set of inequivalent irreducible $V^{\bar0}$-modules. By \cite{Zhu96}, setting $q=\e^{2\pi\i\tau}$ with $\tau\in\H=\{z\in\C\,|\,\Im(z)>0\}$, this character converges to a weakly holomorphic (i.e.\ holomorphic on $\H$ but with possible poles at the cusp $\infty$), vector-valued modular form for $\SLZ$ of weight~$0$. The central charge $c$ determines a bound on the pole orders of $\Ch_{V^{\bar0}}(\tau)$ at the cusp $\infty$ so that the corresponding space of modular forms is finite-dimensional. In fact, since $c>0$ and $\vac\in V^{\bar0}$ has $L_0$-weight~$0$, $\Ch_{V^{\bar0}}(\tau)$ must actually have a pole at $\infty$.

If $c\in\Z$, we saw in \autoref{sec:holcat} that the representation category $\Rep(V^{\bar0})$ is pointed, i.e.\ of the form $\mathcal{C}(D)$ for some discriminant form $D$ depending on the central charge $c$ modulo $8$ (see \autoref{table:16}), which is also the signature of $D$. In this situation, $\Ch_{V^{\bar0}}(\tau)$ is a weight-$0$ vector-valued modular form transforming under a representation of $\SLZ$ that is up to a character the well-known Weil representation of $\SLZ$ (or its metaplectic double cover) on $\C[D]$ \cite{Moe16,EMS20a}. This representation depends on the residue class of the central charge $c$ modulo $24$.

If $24\,|\,c$, this character is trivial so that $\Ch_{V^{\bar0}}(\tau)$ transforms with weight~$0$ under the Weil representation $\rho_D\colon\SLZ\to\C[D]$ for the discriminant form $D=2_{\II}^{+2}$ of signature $0\pmod{8}$. If we label the irreducible modules as $\Irr(V^{\bar0})=\{W^\gamma\,|\,\gamma\in D\}$ so that we may write
\begin{equation*}
\Ch_{V^{\bar0}}(\tau)=\sum_{\gamma\in D}\ch_\gamma(\tau)\ee_\gamma,
\end{equation*}
with $\ch_\gamma\coloneqq\ch_{W^\gamma}$, then $\Ch_{V^{\bar0}}$ transforms explicitly as
\begin{equation*}
\Ch_{V^{\bar0}}(M.\tau)=\rho_D(M)\Ch_{V^{\bar0}}(\tau)
\end{equation*}
for all $M\in\SLZ$ with $\rho_D$ defined via
\begin{align*}
\rho_D(T)_{\beta,\gamma}&=\delta_{\beta,\gamma}\e^{2\pi\i q(\beta)}=\diag(1,1,1,-1),\\
\rho_D(S)_{\beta,\gamma}&=\frac{1}{\sqrt{|D|}}\e^{-2\pi\i b(\beta,\gamma)}=\frac{1}{2}\,\begin{psmallmatrix*}[r]1&1&1&1\\1&1&-1&-1\\1&-1&1&-1\\1&-1&-1&1\end{psmallmatrix*}
\end{align*}
for $\beta$, $\gamma\in D$ where $q$ is the quadratic form of $D$ and $b$ the associated bilinear form. Here, $S=\bigl(\begin{smallmatrix}0&-1\\1&\,\ 0\end{smallmatrix}\bigr)$ and $T=\bigl(\begin{smallmatrix}1&1\\0&1\end{smallmatrix}\bigr)$ are the standard generators of $\SLZ$, and we ordered the elements of $D\cong\Z_2\times\Z_2$ such that the quadratic form (i.e.\ the $L_0$-weights of the corresponding irreducible modules) have values $0$, $0$, $0$, $\sfrac{1}{2}$ modulo~$1$ and the first element is the trivial one.

\medskip

Specialising to $c=24$, since $V^{\bar0}$ is of CFT-type and satisfies the positivity condition, the smallest $q$-powers in the components of $\Ch_{V^{\bar0}}(\tau)$ are $(q^{-1},q^0,q^0,q^{-1/2})$. The space of weakly holomorphic modular forms of weight~$0$ for $\rho_D$ with these pole-order bounds is $4$-dimensional, and we shall state a basis in the following.

In general, a (meromorphic) modular form $f$ for the congruence subgroup $\Gamma_0(2) = \{(\begin{smallmatrix}a&b\\c&d\end{smallmatrix})\in\SLZ\,|\,c=0\pmod{2}\}$ of some weight~$k$ can be lifted to a vector-valued modular form for $\rho_D$, $D=2_{\II}^{+2}$, of the same weight (see \cite{Sch09}, where also an explicit formula for this lift is given). Decomposing
\begin{equation*}
f|_S=f_++f_-
\end{equation*}
where $f_\pm|_T=\pm f_\pm$ (with the Petersson slash operator), the lift $F$ of $f$ is
\begin{equation*}
F=(f+f_+,f_+,f_+,f_-)^\mathrm{T}.
\end{equation*}
Now, consider the standard Hauptmodul
\begin{equation*}
t_2(\tau)=\left(\frac{\eta(\tau)}{\eta(2\tau)}\right)^{24}=q^{-1}-24+276\,q-2048\,q^2+\dots
\end{equation*}
for $\Gamma_0(2)$, generating the field of meromorphic weight-$0$ modular forms for $\Gamma_0(2)$, where $\eta$ is the Dedekind eta function. Then, one can show that a basis of the above $4$-dimensional space of vector-valued modular forms is given by the lift of $t_2$, that of $1/t_2$ and the constant functions $(1,1,0,0)^\mathrm{T}$ and $(1,0,1,0)^\mathrm{T}$.

In fact, as the leading term of $\ch_{V^{\bar0}}=\ch_0$ (corresponding to $0\in D$) is $q^{-1}$, the vector-valued character lies in a $3$-dimensional affine subspace. We describe this space in more detail. Let $f\coloneqq t_2$ and $g\coloneqq 2^{12}/t_2$. Then the corresponding lifts are
\begin{equation*}
F=\begin{pmatrix}f+f_+\\f_+\\f_+\\f_-\end{pmatrix}=\begin{pmatrix}j-f_+-48\\f_+\\f_+\\f_-\end{pmatrix}=\begin{pmatrix}q^{-1}-24+98580\,q+O(q^2)\\98304\,q+O(q^2)\\98304\,q+O(q^2)\\4096\,q^{1/2}+O(q^{3/2})\end{pmatrix}
\end{equation*}
with the modular $j$-function $j(\tau)=q^{-1}+24+196884\,q+\dots$ (take note of the non-standard constant term) and
\begin{equation*}
G=\begin{pmatrix}g+g_+\\g_+\\g_+\\g_-\end{pmatrix}=\begin{pmatrix}-g_+-48\\g_+\\g_+\\g_-\end{pmatrix}=\begin{pmatrix}-24+2048\,q+O(q^2)\\-24-2048\,q+O(q^2)\\-24-2048\,q+O(q^2)\\q^{-1/2}+276\,q^{1/2}+O(q^{3/2})\end{pmatrix}.
\end{equation*}
We can write the components as eta products
\begin{align*}
f(\tau)&=\left(\frac{\eta(\tau)}{\eta(2\tau)}\right)^{24}\!\!,\quad f(S.\tau)=2^{12}\left(\frac{\eta(\tau)}{\eta(\tau/2)}\right)^{24}\!\!,\\
f_\pm(\tau)&=2^{11}\left(\left(\frac{\eta(\tau)}{\eta(\tau/2)}\right)^{24}\!\!\mp\left(\frac{\eta(\tau/2)\eta(2\tau)}{\eta(\tau)^2}\right)^{24}\right)
\end{align*}
and
\begin{align*}
g(\tau)&=2^{12}\left(\frac{\eta(2\tau)}{\eta(\tau)}\right)^{24}\!\!,\quad g(S.\tau)=\left(\frac{\eta(\tau/2)}{\eta(\tau)}\right)^{24}\!\!,\\
g_\pm(\tau)&=\frac{1}{2}\left(\left(\frac{\eta(\tau/2)}{\eta(\tau)}\right)^{24}\!\!\mp\left(\frac{\eta(\tau)^2}{\eta(\tau/2)\eta(2\tau)}\right)^{24}\right).
\end{align*}
The vector-valued character of $V^{\bar0}$ must then be of the form
\begin{equation*}
\Ch_{V^{\bar0}}=F+a\cdot(1,1,0,0)^\mathrm{T}+b\cdot(1,0,1,0)^\mathrm{T}+l\cdot G
\end{equation*}
with coefficients $a$, $b$ and $l$ determined by the low-order terms of $\Ch_{V^{\bar0}}$ via
\begin{align*}
a&=24+2\dim(V_1)-\dim(W^{(1)}_1),\\
b&=24+2\dim(V_1)-\dim(W^{(2)}_1),\\
l&=\dim(V_{1/2})=(24+3\dim(V_1)-\dim(W^{(1)}_1)-\dim(W^{(2)}_1))/24
\end{align*}
where $W^{(1)}$ and $W^{(2)}$ are the two \voa{} neighbours described in \autoref{sec:graphmeth}. The non-trivial identity in the last row is precisely the statement of the dimension formula in \cite{Mon98}, a special case of the much more general formula in \cite{MS23}. Using the explicit expressions for the components of the chosen basis, we see that the fact that all Fourier coefficients of the character must lie in $\N$ is equivalent to
\begin{equation*}
a,\,b,\,l\in\Z
\end{equation*}
and
\begin{align*}
0\leq l\leq 48,\quad a,\,b\geq 24l\quad\text{and}\quad a+b\geq24(l+1).
\end{align*}
The bound on $l$ also follows from its meaning as the number of free fermions that $V$ splits, which is at most $2c=48$ many (see \autoref{prop:holsplit}).


\section{Dual Pairs in \VOA{}s}\label{sec:voas}

In this section we describe results on the internal structure of nice vertex operator (super)algebras. In particular, we introduce the notion of the associated lattice.

Later, two special cases shall be of particular interest to us, namely self-dual \voa{}s $V$, with $\Rep(V)\cong\Vect$, and \voa{}s with $\Rep(V)\cong\mathcal{C}(2_{\II}^{+2})$. They will be studied in \autoref{sec:holvoa} and \autoref{sec:evensub}, respectively. The associated lattice is also central in the description of the \voa{} automorphism groups in \autoref{sec:orbifold}.


\subsection{Cartan Subalgebras in \VOA{}s}\label{sec:cartan}

We describe a theory of Cartan subalgebras in \strat{} \voa{}s $V$ and associate an (in a certain sense) maximal positive-definite, even lattice $L$ with each such \voa{} (cf.\ \cite{Hoe17,Mor21}). This is an application of the main result of \cite{Mas14} (with preliminary results in \cite{Li01,DM06b}). As we remark below, it is straightforward to extend this result to \svoa{}s, in which case the associated lattice may be even or odd (cf.\ \cite{DH12}). Then we explain how this can be used to write the \voa{} $V$ as a simple-current extension of a certain dual pair in $V$ (see, e.g., \cite{CKLR19,CGN21}).

\medskip

Given a \voa{} $V$ of CFT-type, it is well known that $[a,b]\coloneqq a_0b$ for $a,b\in V_1$ equips $V_1$ with the structure of a Lie algebra. In the following, we assume that $V$ is \strat{} and has central charge $c$. Then the Lie algebra~$V_1$ is reductive, i.e.\ a direct sum
\begin{equation*}
V_1\cong\g_1\oplus\dots\oplus\g_s\oplus\mathfrak{a}
\end{equation*}
of a semisimple part $\g=\g_1\oplus\dots\oplus\g_s$ with simple ideals $\g_i$ and an abelian part~$\mathfrak{a}$ \cite{DM04b}. Suppose that $V_1$ has rank $r\coloneqq\rk(V_1)$. We remark that if $V$ is nice, i.e.\ also satisfies the positivity condition, then $r\leq c$~\cite{DM04b}.

The \voa{} $V$ is endowed with a non-degenerate, invariant bilinear form $\langle\cdot,\cdot\rangle$, which is unique up to a non-zero scalar and symmetric \cite{FHL93}. As usual, we normalise it such that $\langle\vac,\vac\rangle=-1$. The form restricts to a non-degenerate, invariant bilinear form on the Lie algebra $V_1$ (and on each of the direct summands $\g_i$) and is given by $a_1b=b_1a=\langle a,b\rangle\vac$ for $a,b\in V_1$.

The (in general non-full) \vosa{} $\langle V_1\rangle$ of $V$ generated by $V_1$ is isomorphic to the tensor product
\begin{equation*}
\langle V_1\rangle\cong L_{\hat\g_1}(k_1,0)\otimes\dots\otimes L_{\hat\g_s}(k_s,0)\otimes M_{\hat{\mathfrak{a}}}(1,0)
\end{equation*}
of simple affine \voa{}s with positive integer levels $k_i\in\Ns$ and the Heisenberg \voa{} associated with the abelian Lie algebra~$\mathfrak{a}$ together with the restriction of $\langle\cdot,\cdot\rangle$ \cite{DM06b}. This entails that $\langle\cdot,\cdot\rangle=k_i(\cdot,\cdot)$ restricted to $\g_i$ where $(\cdot,\cdot)$ is the invariant bilinear form on $\g_i$ normalised such that the long roots have norm $2$. We introduce the notation
\begin{equation*}
V_1\cong\g_{1,k_1}\oplus\dots\oplus\g_{s,k_s}\oplus\mathfrak{a}
\end{equation*}
to denote the type of the reductive Lie algebra $V_1$ together with the levels $k_i$. This is called the \emph{affine structure} of $V$.

\medskip

Now, let $\hh$ be a Cartan subalgebra of $V_1$. Then $\hh$ is an abelian Lie algebra of dimension $\dim(\hh)=r$ and is equipped with the non-degenerate bilinear form obtained as restriction of $\langle\cdot,\cdot\rangle$. We consider the corresponding Heisenberg \voa{}
\begin{equation*}
H\coloneqq\langle\hh\rangle=M_{\hat\hh}(1,0)
\end{equation*}
with standard conformal vector $\omega^\hh$ of central charge $r$, which is in general a non-full \vosa{} of $\langle V_1\rangle$ and of $V$. Let the \emph{Heisenberg commutant}
\begin{equation*}
W\coloneqq\Com_V(H)=\ker_V(\omega^\hh_0)
\end{equation*}
be the commutant \cite{FZ92,LL04} of $H$ in $V$, which has conformal vector $\omega-\omega^\hh$ and central charge $c-r$, where $\omega$ is the conformal vector of $V$. Then, we consider the double commutant of $H$, i.e.\ the kernel of $(\omega-\omega^\hh)_0$ in $V$:
\begin{prop}[\cite{Mas14}]\label{prop:asslat}
Let $V$ be a \strat{} \voa{}. Then $V$ contains the (in general non-full) \vosa{}
\begin{equation*}
\Com_V(\Com_V(H))=\ker_V((\omega-\omega^\hh)_0)\cong V_L
\end{equation*}
for some positive-definite, even lattice $L$ of rank $\rk(L)=r=\rk(V_1)$, which is unique up to isomorphism.
\end{prop}
The double commutant $\Com_V(\Com_V(H))$ is a conformal extension of $H$, and it is the largest such extension of $H$ inside $V$. The main point of \cite{Mas14} is that it is isomorphic to a lattice \voa{} $V_L$ for some positive-definite, even lattice $L$ of rank $\rk(L)=r$. As all Cartan subalgebras $\hh$ are conjugate under inner automorphisms of $V_1$, which extend to automorphisms in $\Aut(V)$, the lattice \voa{} $V_L$ and hence the lattice $L$ are unique up to isomorphism.

We call $L=L_V$ the \emph{associated lattice} of $V$.

We denote the bilinear form on the lattice $L$ also by $\langle\cdot,\cdot\rangle$. This is justified as the Cartan subalgebra $(\hh,\langle\cdot,\cdot\rangle)$ is naturally isometric to $(L\otimes_\Z\C,\langle\cdot,\cdot\rangle)$. Since $\hh$ acts semisimply on $V$ via the zero-mode, let $P\subseteq\hh^*$, identified with $\hh$ via $\langle\cdot,\cdot\rangle$, denote the weights of this action, i.e.\ $\alpha\in\hh$ is in $P$ if and only if there is a non-zero $v\in V$ such that $h_0v=\langle h,\alpha\rangle v$ for all $h\in\hh$. Then $P$ is a positive-definite lattice and $L$ is an even, full-rank sublattice of $P$ (i.e.\ $L\otimes_\Z\C=P\otimes_\Z\C$) such that $P\subseteq L'$.

As a side note, we point out that the \vosa{} $\langle\g\rangle$ generated by the semisimple part of $V_1$ contains an (in general non-full) \vosa{} isomorphic to the lattice \voa{} $V_{Q_\g}$ where $Q_\g\coloneqq\sqrt{k_1}\,Q^l_1\oplus\dots\oplus\sqrt{k_s}\,Q^l_s$ and $Q^l_i$ is the lattice spanned by the long roots of $\g_i$ normalised to have squared norm~$2$ (see, e.g., \cite{DM06b}). Hence, the associated lattice $L$ must be isomorphic to an extension of $Q_\g$, which is of the same rank if and only if $V_1$ is semisimple. In other words, there are the conformal extensions $H\subseteq V_{Q_\g}\otimes M_{\hat{\mathfrak{a}}}(1,0)\subseteq V_L$ inside $V_L$ up to isomorphism.

\begin{rem}\label{rem:asslatsuper}
The assertion of \autoref{prop:asslat} holds analogously for \strat{} \svoa{}s $V$, but the associated lattice~$L$ can now be an odd lattice (cf.\ \cite{DH12}). Similarly, the Heisenberg commutant $W$ is in general a \svoa{}. Indeed, we can apply the proposition to the \strat{} \vosa{} $V^{\bar0}$ and consider $V$ as an extension of $V^{\bar0}$, which has the same weight-$1$ Lie algebra (see also \autoref{sec:evensub}).
\end{rem}

\smallskip

We turn our attention to the Heisenberg commutant $W=\Com_V(H)$, which by construction satisfies $W_1=\{0\}$. The \voa{}s $V_L$ and $W$ form a dual (or Howe or commuting) pair in $V$, i.e.\ they are their mutual commutants, and $V$ is isomorphic to a (conformal) extension of $W\otimes V_L$, i.e.\
\begin{equation*}
V\supseteq W\otimes V_L.
\end{equation*}
Crucially, $W$ is \strat{} by Section~4.3 in \cite{CKLR19} (see also \cite{CGN21}) so that $\Rep(W)$ is a modular tensor category. Let $\{W^i\}$ denote the finitely many irreducible $W$-modules up to isomorphism (with $W^0\cong W$). The \voa{} $V$ decomposes into a direct sum of irreducible $W\otimes V_L$-modules. More precisely, by the theory of mirror extensions \cite{CKM22,Lin17} we obtain:
\begin{prop}\label{prop:asslatdecomp}
Let $V$ be a \strat{} \voa{}. Let $L$ denote the associated lattice and $W$ the Heisenberg commutant of $V$. Then $V$ is a simple-current extension of the \strat{} \voa{} $W\otimes V_L$,
\begin{equation*}
V=\bigoplus_{\alpha+L\in A}W^{\tau(\alpha+L)}\otimes V_{\alpha+L}
\end{equation*}
for some subgroup $A\leq L'/L$, with $\Rep(V_L)_V\cong\mathcal{C}(A)$ the corresponding full subcategory of $\Rep(V_L)\cong\mathcal{C}(L'/L)$, for some pointed full subcategory $\Rep(W)_V$ of $\Rep(W)$ and for some braid-reversing equivalence $\tau\colon\Rep(V_L)_V\to\Rep(W)_V$.
\end{prop}
We remark that while $\Rep(W)$ and $\Rep(V_L)\cong\mathcal{C}(L'/L)$ are modular tensor categories, the full subcategories $\Rep(V_L)_V\cong\mathcal{C}(A)$ and $\Rep(W)_V$ are in general only premodular categories. In particular, the quadratic form restricted to $A$ is in general degenerate. The subgroup $A\leq L'/L$ can be written as $A=P/L$ with the lattice $P\subseteq L'$ described above.

\medskip

With the representation category of $V_L$ known, the representation category of~$V$ can be described in terms of that of $W$ and vice versa \cite{CKM17,YY21}. The following two propositions consider the pointedness and evenness of these categories. Recall that $\Rep(V_L)\cong\mathcal{C}(L'/L)$ is pointed and even.

In general, for a discriminant form $D$ and a subgroup $A\leq D$ the orthogonal complement of $A$ in $D$ is denoted by $A^\bot$. It satisfies $|A||A^\bot|=|D|$.
\begin{prop}\label{prop:simplecurrents}
In the situation of \autoref{prop:asslatdecomp}, $\Rep(V)$ is pointed if and only if $\Rep(W)$ is.
\end{prop}
\begin{proof}
By Corollaries~3.10 and 3.14 in \cite{YY21} (see also \cite{CKM17}, Section~4.3), the number of irreducible (simple-current) $V$- and $W$-modules are related via
\begin{align*}
|\Irr(V)|=\frac{|A^\bot|}{|A|}|\Irr(W)|\quad\text{and}\quad|\Irr(V)_\text{sc}|=\frac{|A^\bot|}{|A|}|\Irr(W)_\text{sc}|.
\end{align*}
By definition, $\Rep(V)$ is pointed if and only if $|\Irr(V)|=|\Irr(V)_\text{sc}|$ and analogously for $W$. Hence, the assertion follows.
\end{proof}

Suppose in the following that the modular tensor categories $\Rep(V)$ and $\Rep(W)$ are pointed. As braided fusion categories they are hence characterised by some discriminant forms $A_V=(A_V,q_V)$ and $A_W=(A_W,q_W)$. We study the evenness (or pseudo-unitarity) discussed in \autoref{sec:mtc}.
\begin{prop}\label{prop:even}
In the situation of \autoref{prop:asslatdecomp}, suppose that $\Rep(V)$ and $\Rep(W)$ are pointed. Then $\Rep(V)$ is even, i.e.\ $\Rep(V)\cong\mathcal{C}(A_V)$, if and only if $\Rep(W)$ is, i.e.\ $\Rep(W)\cong\mathcal{C}(A_W)$.
\end{prop}
The statement of the proposition probably also holds in the absence of the pointedness assumption.
\begin{proof}
Suppose that $\Rep(W)\cong\mathcal{C}(A_W)$ is even or equivalently that all categorical dimensions equal $1$. The same is true for $\Rep(V_L)\cong\mathcal{C}(L'/L)$ and hence for the Deligne product $\Rep(W\otimes V_L)\cong\mathcal{C}(A_W)\boxtimes\mathcal{C}(L'/L)\cong\mathcal{C}(A_W\times L'/L)$. Any \voa{} extension of $W\otimes V_L$, such as $V$, corresponds to an isotropic subgroup $I\leq A_W\times L'/L$, and its representation category is equivalent as a modular tensor category to $\mathcal{C}(I^\bot/I)$, which is again even (see, e.g., \cite{EMS20a,Moe16}).

Conversely, assume that $\Rep(V)\cong\mathcal{C}(A_V)$ is even. Let $W^\beta$, $\beta\in A_W$, be any irreducible $W$-module. Then, by the results in \cite{YY21}, there is a $V_L$-module $V_{\gamma+L}$ such that $W^\beta\otimes V_{\gamma+L}$ is contained in an irreducible (untwisted) $V$-module, which is of the form $\bigoplus_{\alpha+L\in A}W^{\beta+\tau(\alpha+L)}\otimes V_{\gamma+\alpha+L}\supseteq W^\beta\otimes V_{\gamma+L}$. It is not difficult to see that the categorical dimension of $W^\beta$ must be $1$, as this is true for all irreducible $V$- and $V_L$-modules. Hence, $\Rep(W)$ is even.
\end{proof}
If the conditions in both propositions are satisfied, then $\Rep(W)\cong\mathcal{C}(A_W)$. Also set $A_L\coloneqq L'/L$ so that $\Rep(V_L)\cong\mathcal{C}(A_L)$. The braid-reversing equivalence $\tau$ can then be interpreted as an \emph{anti-isometry}\footnote{Here, we take this to mean that $\tau$ is a group isomorphism that maps one quadratic form to the negative of the other.} $\tau\colon A\to A'$ of the subgroups
\begin{equation*}
A\leq A_L\quad\text{and}\quad\tau(A)=A'\leq A_W
\end{equation*}
where $A'\leq A_W$ corresponds to the subcategory $\Rep(W)_V$ of $\Rep(W)$. The \voa{} extensions of $W\otimes V_L$, which has representation category equivalent to $\mathcal{C}(A_W\times A_L)$, correspond bijectively to the isotropic subgroups of $A_W\times A_L$, and the isotropic subgroup $I=\{(\tau(x),x)\,|\,x\in A\}$ yields the \voa{} $V$ with $\Rep(V)\cong\mathcal{C}(A_V)\cong\mathcal{C}(I^\bot/I)$ \cite{EMS20a,Moe16}.

The first equation in the proof of \autoref{prop:simplecurrents} implies that the number of irreducible $V$-modules is given by $|\Irr(V)|=|I^\bot/I|=[A_W:A'][A_K:A]$.


\subsection{Weight-1 Lie Algebra}\label{sec:root}

Now, based on the previous section, we describe the root system of the weight-$1$ Lie algebra of a nice \voa{}.

\medskip

Let $V$ be a \strat{} \voa{}, $L$ its associated lattice, $W$ its Heisenberg commutant and consider the decomposition of $V$ as simple-current extension of $W\otimes V_L$ given in \autoref{prop:asslatdecomp}. For now, also assume that $W$ satisfies the positivity condition. Then also $V$ satisfies the positivity condition as an extension of $W\otimes V_L$ (see, e.g., Lemma~5.2 in \cite{ELMS21}), and both $W$ and~$V$ are nice (and their representation categories hence both even). Then, recalling that $W_1=\{0\}$ and that also $V_L$ satisfies the positivity condition, the weight-$1$ Lie algebra of $V$ is given by
\begin{equation*}
V_1=\!\!\bigoplus_{\alpha+L\in A}\!\!\bigl(W^{\tau(\alpha+L)}\otimes V_{\alpha+L}\bigr)_1=\hh\oplus\!\!\bigoplus_{q\in(0,1]}\bigoplus_{\substack{\alpha+L\in A\\\langle\alpha,\alpha\rangle/2\in q+\Z}}\!\!\!\!\!\!W^{\tau(\alpha+L)}_{1-q}\otimes\Bigl(\!\!\bigoplus_{\substack{\beta\in\alpha+L\\\langle\beta,\beta\rangle/2=q}}\!\!\!\!\C\ee_\beta\Bigr)
\end{equation*}
where
\begin{equation*}
\hh\coloneqq\C\vac\otimes\{k(-1)\ee_0\,|\,k\in L\otimes_\Z\C\}
\end{equation*}
is a Cartan subalgebra of $V_1$. This agrees with the original choice of Cartan subalgebra $\hh$ made in order to obtain the above decomposition in the first place. The root system of $V_1$ corresponding to $\hh$ is
\begin{equation*}
\Phi=\bigcup_{q\in(0,1]}\bigl\{\beta\in P\,\big|\,\langle\beta,\beta\rangle/2=q,\ W^{\tau(\beta+L)}_{1-q}\neq\{0\}\bigr\}\subseteq P\subseteq L'
\end{equation*}
with the positive-definite sublattice $P\subseteq L'$ such that $A=P/L\leq L'/L$. As the Lie algebra $V_1$ is in general reductive (and not necessarily semisimple), the roots $\Phi$ may span a proper (or even zero) subspace of $P\otimes_\Z\C=L\otimes_\Z\C$, the latter having dimension equal to the rank of $V_1$.

The root multiplicities, which can be at most $1$, are given by
\begin{equation*}
\dim(W^{\tau(\beta+L)}_{1-q})\in\{0,1\}
\end{equation*}
for all $q\in(0,1]$ and all $\beta\in P$ of norm $\langle\beta,\beta\rangle/2=q$. We remark that the dimensions of the root-multiplicity spaces correspond to certain coefficients of the vector-valued modular form of weight~$0$ with components $\{\ch_{W^i}\}$ \cite{Zhu96}.

The roots $\beta\in\Phi\subseteq P$ have norms $\langle\beta,\beta\rangle/2=q$ for $q\in(0,1]$ with respect to the invariant bilinear form $\langle\cdot,\cdot\rangle$ on $V$ normalised such that $\langle\vac,\vac\rangle=-1$ (or with respect to the bilinear form $\langle\cdot,\cdot\rangle$ on $L\otimes_\Z\C$). On the other hand, the root norms $\langle\beta,\beta\rangle/2$ are in $\{\sfrac{1}{k_i},\sfrac{1}{k_il_i}\}$ for each simple ideal $\g_i$ of $V_1$ with level $k_i\in\Ns$ and lacing number $l_i\in\{1,2,3\}$. (Recall that we view the roots in $\hh$ by identifying $\hh$ with $\hh^*$ via $\langle\cdot,\cdot\rangle$.)


\section{Self-Dual \VOA{}s}\label{sec:holvoa}
In this section we study nice, self-dual \voa{}s. We recall how the results from \autoref{sec:voas} can be used to classify these \voa{}s, in particular in central charge $24$ \cite{Hoe17}. This classification result and the corresponding statement about the internal structure of these \voa{}s (like their associated lattices) will be the starting point for the determination of the $2$-neighbourhood graph of central charge $24$ in \autoref{sec:class}, together with the orbifold construction described in \autoref{sec:orbifold}.

\medskip

Let $V$ be a \strat{}, self-dual (and hence nice) \voa{}. The central charge $c$ of $V$ is in $8\N$ by the modular invariance result in \cite{Zhu96}. Recall that $V_1$ is a reductive Lie algebra of rank $r$ at most $c$. Following \autoref{sec:cartan}, let $L$ be the associated lattice of $V$ (of rank $r$) and $W=\Com_V(V_L)$ the Heisenberg commutant (of central charge $c-r$). The corresponding decomposition from \autoref{prop:asslatdecomp} simplifies (cf.\ \cite{Hoe17}):
\begin{prop}\label{prop:holdecomp}
Let $V$ be a nice, self-dual \voa{}. Then $V$ is a simple-current extension of the \strat{} \voa{} $W\otimes V_L$,
\begin{equation*}
V=\bigoplus_{\alpha+L\in L'/L}W^{\tau(\alpha+L)}\otimes V_{\alpha+L}
\end{equation*}
for some braid-reversing equivalence $\tau\colon\Rep(V_L)\to\Rep(W)$.
\end{prop}
\begin{proof}
Since $V$ is self-dual, the first formula in the proof of \autoref{prop:simplecurrents} shows that $\Rep(V_L)_V=\Rep(V_L)$, i.e.\ $A=L'/L$ or equivalently $A^\bot=\{0\}$, and that $\Rep(W)_V=\Rep(W)$, i.e.\ $|\Irr(W)|=|A|$, so that there is a braid-reversing equivalence $\tau\colon\Rep(V_L)\to\Rep(W)$ and $V$ decomposes as asserted.
\end{proof}
In particular, like $\Rep(V_L)\cong\mathcal{C}(L'/L)$, $\Rep(W)$ is pointed, i.e.\ all irreducible $W$-modules are simple currents, and even. Hence, $\Rep(W)\cong\mathcal{C}(A_W)$ for some discriminant form $A_W$, and $\tau$ descends to an anti-isometry $\tau\colon L'/L\to A_W$. In other words, $\Rep(W)\cong\mathcal{C}(\overline{L'/L})$ where $\overline{L'/L}$ is the discriminant form of $L$ with the quadratic form multiplied by $-1$.

\medskip

We describe the weight-$1$ Lie algebra of $V$, specialising the results in \autoref{sec:root}. The decomposition of $V$ depends on a choice $\hh=\C\vac\otimes\{k(-1)\ee_0\,|\,k\in L\otimes_\Z\C\}\cong L\otimes_\Z\C$ of Cartan subalgebra of the reductive Lie algebra $V_1$. If $W$ satisfies the positivity condition (always true, e.g., if $c=0$, $8$, $16$ or $24$ and $V_1\neq\{0\}$ \cite{Hoe17}), then the Lie algebra $V_1$ decomposes as
\begin{equation*}
V_1=\hh\oplus\!\!\bigoplus_{q\in(0,1]}\bigoplus_{\substack{\alpha+L\in L'/L\\\langle\alpha,\alpha\rangle/2\in q+\Z}}\!\!\!\!W^{\tau(\alpha+L)}_{1-q}\otimes\Bigl(\!\!\bigoplus_{\substack{\beta\in\alpha+L\\\langle\beta,\beta\rangle/2=q}}\!\!\!\!\C\ee_\beta\Bigr)
\end{equation*}
with root system
\begin{equation*}
\Phi=\bigcup_{q\in(0,1]}\bigl\{\beta\in L'\,\big|\,\langle\beta,\beta\rangle/2=q,\ W^{\tau(\beta+L)}_{1-q}\neq\{0\}\bigr\}\subseteq L'.
\end{equation*}
We shall give more details in the special case of central charge~$24$ in \autoref{sec:lie24}.


\subsection{Enumeration of Self-Dual \VOA{}s}\label{sec:enumvoa}

In the following, we explain how \autoref{prop:holdecomp} can be used to classify the nice, self-dual \voa{}s $V$ in a given central charge, provided that the Heisenberg commutants $W$ that can appear are known, which then also fix the genera of the associated lattices $L$. Note, however, that it is in general very difficult to constrain the possible Heisenberg commutants $W$.

We remark that the following enumeration procedure can, with slight modifications, also be applied to classify any nice, not necessarily self-dual \voa{}s. For instance, in \autoref{sec:enumsvoa} we enumerate \voa{}s with representation category $\mathcal{C}(2_{\II}^{+2})$ rather than $\Vect$.

Also note that the following results can be used to study the automorphism group of $V$ (see \autoref{sec:outv}).

\medskip

In general, for a positive-definite, even lattice $L$ with discriminant form $A_L=L'/L$ there is a short exact sequence
\begin{equation*}
1\longrightarrow\O(L)_0\longrightarrow\O(L)\longrightarrow\overline{\O}(L)\longrightarrow1
\end{equation*}
where $\overline{\O}(L)\leq\O(A_L)$ describes the induced permutation action of $\O(L)$ on the discriminant form $A_L$.

Let $U$ be any nice \voa{} whose representation category is pointed, i.e.\ $\Rep(U)\cong\mathcal{C}(A_U)$ with some discriminant form $A_U=(A_U,q)$. Then, similarly, there exists a short exact sequence
\begin{equation*}
1\longrightarrow\Aut(U)_0\longrightarrow\Aut(U)\longrightarrow\overline{\Aut}(U)\longrightarrow1
\end{equation*}
where $\overline{\Aut}(U)\leq\O(A_U)$ is the group of permutations of the isomorphism classes of irreducible $U$-modules induced by the action of $\Aut(U)$ on them (see, e.g., \cite{Hoe17} for further details). We note that if $U=V_L$ is a lattice \voa{} (so that $A_U=A_L$), then $\overline{\Aut}(V_L)=\overline{\O}(L)$ as subgroups of $\O(A_L)$.

Now, if $U'=\bigoplus_{\alpha\in I}U^\alpha$ is a simple-current extension of $U$ by an isotropic subgroup $I\leq A_U$, then the subgroup $\Aut(U')_U$ of $\Aut(U')$ fixing $U$ setwise fits into the sequence
\begin{equation*}
1\longrightarrow\widehat{I}\longrightarrow\Aut(U')_U\longrightarrow\overline{\Aut}(U)_I\longrightarrow1
\end{equation*}
where $\overline{\Aut}(U)_I$ is the subgroup of $\overline{\Aut}(U)\leq\O(A_U)$ that fixes $I\leq A_U$ setwise and $\widehat{I}$ is the dual group of $I$. In particular, the induced action of $\Aut(U')_U$ on $U$ is the subgroup of $\Aut(U)$ projecting onto $\overline{\Aut}(U)_I$ (cf.\ \cite{Yam04b}).

\medskip

Returning to the setting of this section, we apply the above to the nice, self-dual \voa{} $U'=V$ that is a simple-current extension of $U=W\otimes V_L$ by the isotropic subgroup $I=\{(\tau(x),x)\,|\,x\in A_L\}\leq A_W\times A_L$ with anti-isometry $\tau\colon A_L\to A_W$. To describe the induced action of
\begin{equation*}
\Aut(V)_{\tau}\coloneqq\Aut(U')_U|_U=\Aut(V)_{W\otimes V_L}|_{W \otimes V_L}
\end{equation*}
on $U=W \otimes V_L$, we note that there is an exact sequence
\begin{equation*}
1\longrightarrow\Aut(W)_0\times\Aut(V_L)_0\longrightarrow\Aut(V)_\tau\longrightarrow(\overline{\Aut}(W)\times\overline{\Aut}(V_L))_\tau\longrightarrow1
\end{equation*}
where $S\coloneqq(\overline{\Aut}(W)\times\overline{\Aut}(L))_\tau$ denotes the setwise stabiliser of $I\leq A_W\times A_L$ in $\O(A_W)\times \O(A_L)$.

In order to describe $S=(\overline{\Aut}(W)\times\overline{\O}(L))_\tau$ further, we fix an arbitrary anti-isometry $\tau_0\colon A_L\to A_W$, which induces a group isomorphism $\iota_{\tau_0}\colon\O(A_L)\to\O(A_W)$. We use $\iota_{\tau_0}$ to identify both groups, suppressing the dependence on $\tau_0$ from the notation. Thus, we consider both $\overline{\Aut}(W)$ and $\overline{\Aut}(V_L)=\overline{\O}(L)$ as subgroups of $\O(A_L)$. An arbitrary anti-isometry $\tau\colon A_L\to A_W$ is then of the form $\tau=\tau_0 g$ with $g\in\O(A_L)$, allowing us to consider $\tau$ as an element in $\O(A_L)$.

It follows (cf.\ \cite{Nik80,Hoe17}) that the inequivalent self-dual extensions $V$ of $W\otimes V_L$, necessarily parametrised by an isotropic group $I$ of the above form, are given by the double cosets
\begin{equation*}
\overline{\Aut}(W)\backslash\!\O(A_L)/\overline{\O}(L)
\end{equation*}
and that $S=(\overline{\Aut}(W)\times\overline{\O}(L))_\tau$ is the stabiliser of $\tau\in\O(A_L)$ under the action of $\overline{\Aut}(W)\times\overline{\O}(L)$ on $\O(A_L)$.

\medskip

Let $S_L$ be the projection of $S$ onto the second factor. Then the induced action of $({\Aut}(W)\times\Aut(V_L))_\tau\leq\Aut(V_L)$ on $V_L\cong\C\vac\otimes V_L\subseteq W\otimes V_L$, given by some group $\Aut(V_L)_\tau\leq\Aut(V_L)$, can be obtained from the exact sequence
\begin{equation*}
1\longrightarrow\Aut(V_L)_0\longrightarrow\Aut(V_L)_\tau\longrightarrow S_L\longrightarrow 1.
\end{equation*}


\subsection{Central Charge 24}\label{sec:24}

We specialise to central charge $c=24$. The classification of these \voa{}s was conjectured by Schellekens, who proved that the weight-$1$ structure $V_1$ must be one of $71$ Lie algebras (either zero, abelian of rank $24$ or semisimple), referred to as Schellekens' list \cite{Sch93}. Rigorous and systematic classification proofs are given in \cite{Hoe17,MS23,ELMS21,HM22,MS21} (see also \cite{DSW23}), where it is shown using different approaches that each of the $70$ non-zero Lie algebras is realised by exactly one nice, self-dual \voa{} of central charge $24$, with the notable exception that it is not known whether the moonshine module $V^\natural$ \cite{FLM88} is the unique such \voa{} $V$ with $V_1=\{0\}$.

In this text, we shall focus on the approach in \cite{Hoe17} based on associated lattices. If $V_1$ vanishes, like for the moonshine module $V^\natural$, then also the associated lattice $L$ does and the corresponding decomposition $V\cong V\otimes\C\vac$ is ineffective. In the following, we assume that $V_1\neq\{0\}$ and hence $L\neq\{0\}$. For central charge $24$ it was shown in \cite{Hoe17,Lam20} that the Heisenberg commutant comes from the Leech lattice $\Lambda$ and an element in the Conway group $\Co_0=\O(\Lambda)$:
\begin{prop}\label{prop:commconway}
Let $V$ be a nice, self-dual \voa{} of central charge $24$ with $V_1\neq\{0\}$. Then its Heisenberg commutant $W$ is isomorphic to
\begin{equation*}
W\cong V_{\Lambda_\mu}^{\hat\mu}
\end{equation*}
for one of 11 conjugacy classes $\mu\in\O(\Lambda)$ of the Leech lattice $\Lambda$, listed in~\autoref{table:11} and labelled by the letters A to K.
\end{prop}
Here, $\Lambda_\mu=(\Lambda^\mu)^\bot$ denotes the coinvariant lattice, and $\hat\mu$ is a lift of $\mu$ (restricted to $\Lambda_\mu$) to $\Aut(V_{\Lambda_\mu})$. Note that since $\mu$ acts fixed-point freely on $\Lambda_\mu$, all its lifts are standard lifts and conjugate (see, e.g., \cite{EMS20a}).

The genus of the associated lattice $L$ is determined by the rank $r=\rk(\Lambda^\mu)$ and the condition that $\Rep(V_{\Lambda_\mu}^{\hat\mu})\cong\mathcal{C}(\overline{L'/L})$, fixing the discriminant form of $L$. Hence, the associated lattice $L$ is from one of $11$ matching lattice genera (see \autoref{table:11}).

\begin{table}[ht]\caption{The $12$ Heisenberg commutants appearing in the nice, self-dual \voa{}s of central charge~$24$ and the corresponding lattice genera.}
\begin{tabular}{c|l|r|l||l|r||r}
\multicolumn{2}{l|}{Commutant} & $c$ & $\O(\Lambda)$ & Lattice genus\tablefootnote{Note that there is a typo for lattice genus I in \cite{Hoe17}, Table~4 and \cite{HM22}, Table~1.} & No.\ &  VOAs\ \\\hline\hline
A & $\mathcal{C}(1)$                              &  0 & $\sAA$ & $\gAA$ & 24 & 24\\
B & $\mathcal{C}(2_{\II}^{+10})$                  &  8 & $\sBB$ & $\gBB$ & 17 & 17\\
C & $\mathcal{C}(3^{-8})$                         & 12 & $\sCC$ & $\gCC$ &  6 &  6\\
D & $\mathcal{C}(2_{\II}^{-10}4_{\II}^{-2})$      & 12 & $\sDD$ & $\gDD$ &  2 &  9\\
E & $\mathcal{C}(2_6^{+2}4_{\II}^{-6})$           & 14 & $\sEE$ & $\gEE$ &  5 &  5\\
F & $\mathcal{C}(5^{+6})$                         & 16 & $\sFF$ & $\gFF$ &  2 &  2\\
G & $\mathcal{C}(2_{\II}^{+6}3^{-6})$             & 16 & $\sGG$ & $\gGG$ &  2 &  2\\
H & $\mathcal{C}(7^{+5})$                         & 18 & $\sHH$ & $\gHH$ &  1 &  1\\
I & $\mathcal{C}(2_1^{+1}4_5^{-1}8_{\II}^{-4})$   & 18 & $\sII$ & $\gII$ &  1 &  1\\
J & $\mathcal{C}(2_{\II}^{+4}4_{\II}^{-2}3^{-5})$ & 18 & $\sJJ$ & $\gJJ$ &  1 &  2\\
K & $\mathcal{C}(2_{\II}^{-2}4_{\II}^{-2}5^{+4})$ & 20 & $\sKK$ & $\gKK$ &  1 &  1\\\hline
L & $\mathcal{C}(1)$                              & 24 &        & $\gLL$ &  1 &(1)\\
\end{tabular}
\label{table:11}
\end{table}

The result also shows that $W\cong V_{\Lambda_\mu}^{\hat\mu}$ satisfies the positivity condition, which also follows more conceptually from the fact that $W=\Com_V(H)$ is an extension of the parafermion vertex operator algebra $\Com_{\langle V_1\rangle}(H)$ (cf.\ proof of Proposition~5.3 in \cite{ELMS21}), noting that, except for the case of $V_1$ abelian, $\langle V_1\rangle$ is a tensor product of simple affine \voa{}s at positive integer levels and a full \vosa{} of $V$.\footnote{Both statements are not true anymore in central charges greater than $24$.}

The main consequence of the above result is that the classification of the nice, self-dual \voa{}s of central charge $24$ is governed by the Leech lattice~$\Lambda$ and the Conway group $\Co_0=\O(\Lambda)$. We shall see in this text that this is still true for self-dual \svoa{}s (see \autoref{thm:commconway}).

\medskip

The existence of a nice, self-dual \voa{} $V$ of central charge $24$ with $V_1$ isomorphic to one of the $70$ non-zero Lie algebras on Schellekens' list is then proved in \cite{Hoe17} by realising $V$ explicitly as an extension of $V_{\Lambda_\mu}^{\hat\mu}\otimes V_L$, going through all $11$ lattice genera and all lattices in each genus (their numbers are given in \autoref{table:11}), i.e.\ by applying the enumeration method described in \autoref{sec:enumvoa}. We detail in \autoref{sec:lie24} how to determine the Lie algebra structure of $V_1$ in the above extension picture. In principle, $V_1$ and the \voa{} structure of $V$ depend not only on the lattice $L$ but also on the choice of the braid-reversing equivalence $\tau\colon\Rep(V_L)\to\Rep(V_{\Lambda_\mu}^{\hat\mu})$. In practice, this happens only for commutants of types D and J (compare the last two columns in \autoref{table:11}).

In fact, knowing the automorphism groups of the Heisenberg commutants $V_{\Lambda_\mu}^{\hat\mu}$, the approach in \cite{Hoe17} (see \autoref{sec:enumvoa}) yields a classification of the nice, self-dual \voa{}s of central charge $24$ with non-zero $V_1$. This computation is performed in \cite{BLS23}, with almost complete results already in \cite{Hoe17}. The outcome is that for each non-zero weight-$1$ Lie algebra on Schellekens' list there is an up to isomorphism unique nice, self-dual \voa{} $V$ of central charge $24$ with that weight-$1$ structure.


\subsection{Weight-1 Lie Algebra}\label{sec:lie24}

In the following, specialising the results in \autoref{sec:root}, we describe the weight\nobreakdash-$1$ Lie algebra $V_1$ of a nice, self-dual \voa{} $V$ of central charge $24$ with $V_1\neq\{0\}$ based on the decomposition of $V$ given in \autoref{prop:holdecomp}. Indeed, $V$ is a simple-current extension of $V_{\Lambda_\mu}^{\hat\mu}\otimes V_L$ where $\mu\in\Co_0$ and the lattice $L$ are as in \autoref{table:11}, i.e.\
\begin{equation*}
V=\bigoplus_{\alpha+L\in L'/L}V_{\Lambda_\mu}^{\hat\mu}(\tau(\alpha+L))\otimes V_{\alpha+L}
\end{equation*}
with braid-reversing equivalence $\tau\colon\Rep(V_L)\to\Rep(V_{\Lambda_\mu}^{\hat\mu})$. Then, as $(V_{\Lambda_\mu}^{\hat\mu})_1=\{0\}$ and because both $V_{\Lambda_\mu}^{\hat\mu}$ and $V_L$ satisfy the positivity condition, the weight-$1$ Lie algebra of $V$ is given by
\begin{equation*}
V_1=\hh\oplus\bigoplus_{d\mid n}\!\!\bigoplus_{\substack{\alpha+L\in L'/L\\\langle\alpha,\alpha\rangle/2\in 1/d+\Z}}\!\!\!\!\!\!V_{\Lambda_\mu}^{\hat\mu}(\tau(\alpha+L))_{1-1/d}\otimes\Bigl(\!\!\!\!\bigoplus_{\substack{\beta\in\alpha+L\\\langle\beta,\beta\rangle/2=1/d}}\!\!\!\!\!\!\C\ee_\beta\Bigr)
\end{equation*}
with Cartan subalgebra $\hh=\C\vac\otimes\{k(-1)\ee_0\,|\,k\in L\otimes_\Z\C\}$ where $n\coloneqq|\hat\mu|$ is the order of a standard lift in $\Aut(V_\Lambda)$ of $\mu\in\O(\Lambda)$. Typically, $n$ equals the order of $\mu$, but for types D, J and K the order doubles. The roots of $V_1$, viewed in $\hh\cong L\otimes_\Z\C$ by identifying $\hh^*$ with $\hh$ via the invariant bilinear form $\langle\cdot,\cdot\rangle$, are
\begin{equation*}
\Phi=\bigcup_{d\mid n}\bigl\{\beta\in L'\,\big|\,\langle\beta,\beta\rangle/2=1/d,\ V_{\Lambda_\mu}^{\hat\mu}(\tau(\beta+L))_{1-1/d}\neq\{0\}\bigr\}\subseteq L'
\end{equation*}
with root multiplicities
\begin{equation*}
\dim(V_{\Lambda_\mu}^{\hat\mu}(\tau(\beta+L))_{1-1/d})\in\{0,1\}
\end{equation*}
for all $d\,|\,n$ and all $\beta\in L'$ with $\langle\beta,\beta\rangle/2=\sfrac{1}{d}$. The roots $\beta\in\Phi\subseteq L'$ have norms $\langle\beta,\beta\rangle/2=\sfrac{1}{d}$ for $d\,|\,n$.

Note that the root multiplicities correspond to the singular terms of the vector-valued modular form for $\SLZ$ of weight $-\rk(L)/2$ with components
\begin{equation*}
\ch_{V_{\Lambda_\mu}^{\hat\mu}(\tau(\alpha+L))}(\tau)/\eta(\tau)^{\rk(L)},\quad\alpha+L\in L'/L.
\end{equation*}
This modular form was described (as lift of some eta product) for genera A, B, C, F, G, H in \cite{Sch06,Moe21}, for genus D in \cite{HS14} and for genera E and I in \cite{Sch09}. Nils Scheithauer informed us that he also has a nice description of the vector-valued modular forms for genera J and K. (See also \cite{DSW23}.)

In particular, singular terms can only appear in components $\alpha+L\in L'/L$ with $\langle\alpha,\alpha\rangle/2\in\sfrac{1}{d}+\Z$ and $d\alpha\in L$ for $d\,|\,n$. Hence,
\begin{align*}
V_1&=\hh\oplus\bigoplus_{d\mid n}\!\!\bigoplus_{\substack{\alpha+L\in L'/L\\\langle\alpha,\alpha\rangle/2\in 1/d+\Z\\d\alpha\in L}}\!\!\!\!V_{\Lambda_\mu}^{\hat\mu}(\tau(\alpha+L))_{1-1/d}\otimes\Bigl(\!\!\!\!\bigoplus_{\substack{\beta\in\alpha+L\\\langle\beta,\beta\rangle/2=1/d}}\!\!\!\!\!\!\C\ee_\beta\Bigr)
\end{align*}
with root system
\begin{equation*}
\Phi=\bigcup_{d\mid n}\bigl\{\beta\in L'\cap L/d\,\big|\,\langle\beta,\beta\rangle/2=1/d,\ V_{\Lambda_\mu}^{\hat\mu}(\tau(\beta+L))_{1-1/d}\neq\{0\}\bigr\}\subseteq L'.
\end{equation*}
For genera A, B, C, F, G, H all the root multiplicities in the above expression are non-zero so that $\Phi=\bigcup_{d\mid n}\{\beta\in L'\cap L/d\,|\,\langle\beta,\beta\rangle/2=1/d\}\subseteq L'$. For all genera, the roots in $\Phi$ of length $1$ are $\{\beta\in L\,|\,\langle\beta,\beta\rangle/2=1\}$.


\section{Even \VOSA{}}\label{sec:evensub}

In the following, in analogy to \autoref{sec:holvoa} and again using the results from \autoref{sec:voas}, we describe the internal structure of the even part of nice, self-dual \svoa{}s of central charge $c\in8\Ns$, i.e.\ of nice \voa{}s with representation category $\mathcal{C}(2_{\II}^{+2})$, and based thereon their self-dual extensions. We also explain how to enumerate these \voa{}s based on this structure. We shall use these results in \autoref{sec:class} as one of the two main tools to classify the nice, self-dual \svoa{}s of central charge~$24$.


\subsection{Glueing Types}\label{sec:glueing}

We describe the Heisenberg commutant and the associated lattice of the even part of a nice, self-dual \svoa{} $V$ of central charge $c\in8\Ns$, and the different ways they can be extended (or \emph{glued}) to $V$.

\medskip

Let $U$ be a nice \voa{} with $\Rep(U)\cong\mathcal{C}(2_{\II}^{+2})$, let $K$ be its associated lattice with corresponding discriminant form $A_K\coloneqq K'/K$ and $W$ its Heisenberg commutant. By \autoref{prop:asslatdecomp} there is a subgroup $A\leq A_K$ and a full subcategory $\Rep(W)_V$ of $\Rep(W)$ such that $U$ is a simple-current extension
\begin{equation*}
U=\bigoplus_{\alpha+K\in A}W^{\tau(\alpha+K)}\otimes V_{\alpha+K}
\end{equation*}
where $\tau\colon\Rep(V_K)_V\to\Rep(W)_V$ is a braid-reversing equivalence between the full subcategory $\Rep(V_K)_V\cong\mathcal{C}(A)$ of $\Rep(V_K)\cong\mathcal{C}(A_K)$ and $\Rep(W)_V$.

By assumption, the representation category $\Rep(U)\cong\mathcal{C}(2_{\II}^{+2})$ is pointed and even with four irreducible modules indexed by the discriminant form $2_{\II}^{+2}$. Then, by \autoref{prop:simplecurrents} and \autoref{prop:even}, also the representation category of the Heisenberg commutant $W$ is pointed and even, i.e.\ $\Rep(W)\cong\mathcal{C}(A_W)$ for some discriminant form $A_W$.

Recall that the braid-reversing equivalence $\tau\colon\Rep(V_K)_V\to\Rep(W)_V$ can be interpreted as an anti-isometry $\tau\colon A\to A'$ with $A\leq A_K$ and $\tau(A)=A'\leq A_W$. The representation category of $W\otimes V_K$ is given by the Deligne product $\mathcal{C}(A_W)\boxtimes\mathcal{C}(A_K)\cong\mathcal{C}(A_W\times A_K)$, which is pointed with discriminant form $A_W\times A_K$.

Also recall that the \voa{} extensions of nice \voa{}s with pointed representation categories $\mathcal{C}(D)$ have a rather simple description in terms of isotropic subgroups (see, e.g., \cite{EMS20a,Moe16}). Specifically, such extensions correspond bijectively to isotropic subgroups $I$ of the discriminant form~$D$ and the representation category of the extension is again pointed and even with discriminant form $I^\bot/I$, where $I^\bot$ denotes the orthogonal complement of $I$ in $D$.

In the present setting, $U$ is a simple-current extension of $W\otimes V_K$ corresponding to the isotropic subgroup
\begin{equation*}
I=I_U=\{(\tau(x),x)\,|\,x\in A\}\leq A_W\times A_K
\end{equation*}
of order $|I|=|A|=|A'|$ satisfying $I^\bot/I\cong 2_{\II}^{+2}$. Since $|I^\bot/I|=[A_W:A'][A_K:A]$, this implies that
\begin{equation*}
[A_W:A'][A_K:A]=4,
\end{equation*}
prompting us to distinguish three cases (or \emph{glueing types}):
\begin{enumerate}
\item[(I)] $[A_W:A']=1$ and $[A_K:A]=4$ (so that $|A_K|=4\,|A_W|$),
\item[(II)] $[A_W:A']=2$ and $[A_K:A]=2$ (so that $|A_K|=|A_W|$),
\item[(III)] $[A_W:A']=4$ and $[A_K:A]=1$ (so that $4\,|A_K|=|A_W|$).
\end{enumerate}
It is apparent that
\begin{equation*}
I+A'^\bot\times A^\bot\subseteq I^\bot
\end{equation*}
where $A^\bot$ is the orthogonal complement of $A$ in $A_K$ and $A'^\bot$ that of $A'$ in $A_W$. Note that $|A^\bot|=[A_K:A]$ and $|A'^\bot|=[A_W:A']$.

In types I and III, $I$ has trivial intersection with $A'^\bot\times A^\bot$ so that in fact
\begin{equation*}
I^\bot=I+A'^\bot\times A^\bot.
\end{equation*}
In type~II, however, this cannot be the case as $I^\bot/I\cong 2_{\II}^{+2}$ is indecomposable but $A'^\bot\times A^\bot$ is not.

In the following, we describe the three cases in more detail, and in particular the self-dual extensions of $U$ as extensions of $W\otimes V_K$. Recall from \autoref{sec:graphmeth} that $U$ has three $\Z_2$-extensions, to the self-dual \voa{}s $W^{(1)}$ and $W^{(2)}$ and to the self-dual \svoa{} $V$. Let $\gamma_1$, $\gamma_2$ and $\gamma_3$, respectively, be the corresponding elements in $2_{\II}^{+2}\cong I^\bot/I$, of norms $0$, $0$ and $\sfrac{1}{2}\pmod{1}$.


\subsection*{Glueing Type I}

Here, $A'^\bot$ is trivial while $A^\bot$ has order~$4$. The orthogonal complement of $I$ in $A_W\times A_K$ is
\begin{equation*}
I^\bot=I+\{0\}\times A^\bot\quad\text{so that}\quad I^\bot/I\cong A^\bot,
\end{equation*}
which is isomorphic to $2_{\II}^{+2}$. Viewing $\gamma_i\in 2_{\II}^{+2}\cong A^\bot\leq A_K$, we set
\begin{equation*}
I_i\coloneqq I+\{0\}\times\{0,\gamma_i\}=I\cup(I+(0,\gamma_i))
\end{equation*}
for $i=1$, $2$ and $3$. Then $I_1$ and $I_2$ are isotropic subgroups of $A_W\times A_K$ satisfying $I_i^\bot=I_i$, while the subgroup $I_3$ is not quite isotropic. The self-dual vertex operator (super)algebra extensions $W^{(1)}$, $W^{(2)}$ and $W^{(3)}=V$ of $U$ corresponding to $I_1$, $I_2$ and $I_3$, respectively, are given by
\begin{equation*}
W^{(i)}=\bigoplus_{\alpha+K\in A}W^{\tau(\alpha+K)}\otimes(V_{\alpha+K}\oplus V_{\gamma_i+\alpha+K})
\end{equation*}
for $i=1$, $2$ and $3$. In each, $W$ forms a dual pair with $V_K\oplus V_{\gamma_i+K}\cong V_{K\cup(\gamma_i+K)}$.

The Cartan subalgebra $\hh=\C\vac\otimes\{k(-1)\ee_0\,|\,k\in K\otimes_\Z\C\}$ of $U_1$ is still a Cartan subalgebra of $W^{(1)}_1$ and $W^{(2)}_1$, and trivially one of $V_1=U_1$. Indeed, a Cartan subalgebra of $W^{(i)}_1$ is given by the centraliser of $\hh$ in $W^{(i)}_1$ \cite{Kac90}, but it follows from the above decomposition of $W^{(i)}$ that this centraliser is just
\begin{equation*}
C_{W^{(i)}_1}(\hh)=\hh
\end{equation*}
as only vectors with $\ee_0$, i.e.\ with zero lattice momentum, can appear. The above decomposition also implies that
\begin{equation*}
\Com_{W^{(i)}}(\Com_{W^{(i)}}(\langle\hh\rangle))=V_{K\cup(\gamma_i+K)}
\end{equation*}
for $i=1$, $2$ and $3$. Hence, $K\cup(\gamma_i+K)$, which is an index-$2$ extension of $K$, is the associated lattice of $W^{(i)}$ for $i=1$, $2$ and that of $V$ for $i=3$ (in which case it is odd), and $W$ is the Heisenberg commutant in each case (see \autoref{prop:asslat} and \autoref{rem:asslatsuper}).

In view of \autoref{sec:orbifold} we note that $U$ is the \fpvosa{} under an inner automorphism of $W^{(1)}$ and under one of $W^{(2)}$.

We summarise the discussion for glueing type~I:
\[
\begin{tabular}{l|l|l}
&Dual pair&Assoc.\ lattice\\\hline\hline
$U$ & $W\otimes V_K$ & $K$\\
$W^{(1)}$ & $W\otimes V_{K\cup(\gamma_1+K)}$ & $K\cup(\gamma_1+K)$\\
$W^{(2)}$ & $W\otimes V_{K\cup(\gamma_2+K)}$ & $K\cup(\gamma_2+K)$\\
$V$ & $W\otimes V_{K\cup(\gamma_3+K)}$ & $K\cup(\gamma_3+K)$
\end{tabular}
\]


\subsection*{Glueing Type III}

This case is similar to type~I, but now $A^\bot$ is trivial while $A'^\bot$ has order~$4$. The orthogonal complement of $I$ in $A_W\times A_K$ is
\begin{equation*}
I^\bot=I+A'^\bot\times\{0\}\quad\text{so that}\quad I^\bot/I\cong A'^\bot,
\end{equation*}
which is isomorphic to $2_{\II}^{+2}$. Viewing $\gamma_i\in 2_{\II}^{+2}\cong A'^\bot\leq A_W$, we define
\begin{equation*}
I_i\coloneqq I+\{0,\gamma_i\}\times\{0\}=I\cup(I+(\gamma_i,0))
\end{equation*}
for $i=1$, $2$, $3$. Then $I_1$ and $I_2$ are isotropic subgroups of $A_W\times A_K$ with $I_i^\bot=I_i$. The self-dual vertex operator (super)algebra extensions $W^{(1)}$, $W^{(2)}$ and $W^{(3)}=V$ of $U$ corresponding to $I_1$, $I_2$ and $I_3$, respectively, are
\begin{equation*}
W^{(i)}=\bigoplus_{\alpha+K\in A}(W^{\tau(\alpha+K)}\oplus W^{\gamma_i+\tau(\alpha+K)})\otimes V_{\alpha+K}
\end{equation*}
for $i=1$, $2$ and $3$. In each, $W\oplus W^{\gamma_i}$ and $V_K$ form a dual pair.

However, for $i=1$ and $2$ this is typically not the dual pair we shall be interested in as $K$ is typically not the associated lattice of $W^{(1)}$ or $W^{(2)}$.

Indeed, the rank of the associated lattice of $W^{(i)}$ equals the rank of the weight-$1$ Lie algebra of $W^{(i)}$, a Cartan subalgebra of which is given by the centraliser of $\hh=\C\vac\otimes\{k(-1)\ee_0\,|\,k\in K\otimes_\Z\C\}$ in $W^{(i)}_1$. Based on the above decomposition of $W^{(i)}$, this centraliser is:
\begin{lem}\label{lem:csaIII}
For type~III, a Cartan subalgebra of $W^{(i)}_1$, $i=1$, $2$, is given by
\begin{equation*}
C_{W^{(i)}_1}(\hh)=\hh\oplus(W^{\gamma_i})_1\otimes\C\ee_0.
\end{equation*}
\end{lem}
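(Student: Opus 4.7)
The plan is to read off the centraliser directly from the explicit decomposition of $W^{(i)}$ as a $W\otimes V_K$-module and the observation that $\hh$ acts via the lattice factor only.

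First, I would recall the decomposition
\[
W^{(i)}=\bigoplus_{\alpha+K\in A}\bigl(W^{\tau(\alpha+K)}\oplus W^{\gamma_i+\tau(\alpha+K)}\bigr)\otimes V_{\alpha+K}
\]
and pass to the weight-$1$ piece summand by summand. For any $h\in\hh$, since $h$ lies in the Heisenberg sub\voa{} $H\subseteq V_K$, the zero-mode $h_0$ commutes with the entire $W$-action and with all Heisenberg creation modes $k(-n)$ for $n\geq 1$, while it acts on a momentum vector $\ee_\beta$ as multiplication by $\langle h,\beta\rangle$. Hence, on a weight-$1$ vector of the form $v\otimes u\ee_\beta$ with $\beta\in\alpha+K$, we have $h_0(v\otimes u\ee_\beta)=\langle h,\beta\rangle\,v\otimes u\ee_\beta$.

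Second, I would argue that $\langle h,\beta\rangle=0$ for every $h\in\hh=K\otimes_\Z\C$ forces $\beta=0$, which is only possible when $\alpha+K=0+K$. So the centraliser of $\hh$ in $W^{(i)}_1$ is entirely contained in the single summand
\[
\bigl(W\oplus W^{\gamma_i}\bigr)\otimes V_K,
\]
and within it only those weight-$1$ vectors whose lattice momentum vanishes, i.e.\ vectors of the form $v\otimes u\ee_0$ with $u$ a Heisenberg polynomial of weight $1-\wt(v)$.

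Finally, I would split this contribution according to the two $W$-modules. For the summand $W\otimes V_K$, the fact that $W$ is nice (in particular of CFT-type with $W_0=\C\vac$) and $W_1=\{0\}$ leaves only $\vac\otimes\{k(-1)\ee_0\mid k\in K\otimes_\Z\C\}=\hh$. For the summand $W^{\gamma_i}\otimes V_K$, the positivity condition for $W$ together with $\gamma_i\neq 0$ gives $(W^{\gamma_i})_0=\{0\}$, so the only weight-$1$, zero-momentum vectors come from $(W^{\gamma_i})_1\otimes\C\ee_0$. Summing the two contributions yields $C_{W^{(i)}_1}(\hh)=\hh\oplus(W^{\gamma_i})_1\otimes\C\ee_0$, which is the stated identity; that this is indeed a Cartan subalgebra follows from the general principle cited from \cite{Kac90}. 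No step is a real obstacle here — the only subtlety is to invoke the positivity condition on $W$ to eliminate $(W^{\gamma_i})_0$.
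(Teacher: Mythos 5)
Your computation follows essentially the same route as the paper's proof: identify the centraliser of $\hh$ with the zero-momentum weight-$1$ vectors in the decomposition of $W^{(i)}$ over $W\otimes V_K$, note that non-degeneracy of the form forces the lattice coset to be $K$ itself, and then read off the two summands. The first summand is handled correctly: all you actually use there is that $W$ is of CFT-type (which follows from the CFT-type of $U$) together with $W_1=\{0\}$, so the aside that $W$ is ``nice'' is harmless at that point.

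The gap is in the second summand. You eliminate $(W^{\gamma_i})_0$ (and, implicitly, all non-positive $L_0$-eigenvalues of $W^{\gamma_i}$) by invoking the positivity condition for $W$. But in the setting of this lemma $W$ is merely the Heisenberg commutant of a nice \voa{} $U$ with $\Rep(U)\cong\mathcal{C}(2_{\II}^{+2})$: it is known to be \strat{} and of CFT-type, but its positivity condition is not among the hypotheses, and the paper's own proof explicitly stresses that ``$W$ may or may not satisfy the positivity condition''. Identifying $W$ and establishing such properties is only done later (cf.\ \autoref{thm:commconway}), and this lemma is part of the machinery used there, so assuming positivity of $W$ is unjustified and borders on circularity. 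The correct replacement is to derive positivity of the single module $W^{\gamma_i}$ from the positivity condition of $U$ (equivalently, the CFT-type of $W^{(i)}$): the summand $W^{\gamma_i}\otimes V_K$ lies in the irreducible $U$-module, not isomorphic to $U$, by which $W^{(i)}$ extends $U$, hence has positive $L_0$-grading; a vector $v\otimes\ee_0$ with $v\in(W^{\gamma_i})_n$, $n\leq0$, would contradict this. With that substitution your argument is complete and agrees with the paper's.
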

\begin{proof}
The centraliser of $\hh$ consists of those vectors in $W^{(i)}_1$ whose lattice part has zero momentum, i.e.\ with $\ee_0$. Based on the above decomposition of $W^{(i)}$, these lie in $\hh$ or $(W^{\gamma_i}\otimes V_K)_1$. The \voa{} $W$ may or may not satisfy the positivity condition, but $W^{\gamma_i}$ can only have positive $L_0$-eigenvalues, or else $W^{(i)}$ would not be of CFT-type, contradicting the assumption that $U$ satisfies the positivity condition. Hence, $(W^{\gamma_i}\otimes V_K)_1=(W^{\gamma_i})_1\otimes\C\ee_0$.
\end{proof}

Typically, $(W^{\gamma_i})_1\neq\{0\}$ so that the ranks of $W^{(1)}_1$ and $W^{(2)}_1$ are strictly larger than the rank of $U_1$, though not necessarily both the same. Hence, the associated lattices $L_1$ and $L_2$ of $W^{(1)}$ and $W^{(2)}$, respectively, are typically infinite-index extensions of $K$, i.e.\ of larger rank than $K$.

For instance, if $U$ has central charge $c=8$, $16$ or $24$, then unless $W^{(i)}_1=\{0\}$, like for the moonshine module $V^\natural$, the ranks do indeed satisfy $\rk(W^{(i)}_1)>\rk(U_1)$. If not, then $U$ would be a \fpvosa{} of $W^{(i)}$ under an inner automorphism of $W^{(i)}$ of order~$2$ (see \cite{HM22,BLS23}), a contradiction.

The Heisenberg commutants $\tilde{W}$ and $\tilde{\tilde{W}}$ of $W^{(1)}$ and $W^{(2)}$, respectively, are typically non-full subalgebras of $W$.

On the other hand, the Cartan subalgebra $\hh$ of $U_1$ is trivially also a Cartan subalgebra of $V_1=U_1$ so that $K$ is the associated lattice of $V$ and $W\oplus W^{\gamma_3}$ is the Heisenberg commutant.

In view of \autoref{sec:orbifold} we note that $U$ is the \fpvosa{} under a non-inner automorphism of $W^{(1)}$ and under one of $W^{(2)}$, as it follows from the discussion there that inner automorphisms can only yield glueing type I or II.

We summarise the discussion for glueing type~III, typically:
\[
\begin{tabular}{l|l|l|l}
&Dual pair&Lattice&Assoc.\\\hline\hline
$U$ & $W\otimes V_K$ & $K$ & yes\\\hline
\multirow{2}{*}{$W^{(1)}$} & $(W\oplus W^{\gamma_1})\otimes V_K$ & $K$ & no\\
 & $\tilde{W}\otimes V_{L_1}$ & $L_1$, $\rk(L_1)>\rk(K)$ & yes\\\hline
\multirow{2}{*}{$W^{(2)}$} & $(W\oplus W^{\gamma_2})\otimes V_K$ & $K$ & no\\
 & $\tilde{\tilde{W}}\otimes V_{L_2}$ & $L_2$, $\rk(L_2)>\rk(K)$ & yes\\\hline
$V$ & $(W\oplus W^{\gamma_3})\otimes V_K$ & $K$ & yes
\end{tabular}
\]

We shall later find two (non-typical) \voa{}s $U$ of central charge~$24$ for glueing type~III, namely the \fpvosa{}s of the moonshine module $V^\natural$ under the 2B- and under the 2A-involution, for which the ranks of the self-dual extensions do not (both) increase (see \autoref{sec:orbcomp}). They are the even parts of the odd moonshine module $\VO$ and of the tensor product $\VB\otimes F$ of the shorter moonshine module with a free fermion, respectively.


\subsection*{Glueing Type II}

This case is slightly more complicated than the other two. Both $A^\bot$ and $A'^\bot$ have order~$2$. We already argued that $A'^\bot\times A^\bot$ has non-trivial intersection with $I$ or equivalently that $I^\bot$ is strictly larger than $I+A'^\bot\times A^\bot$, recalling that both $A'^\bot\times A^\bot$ and $I^\bot/I$ have order~$4$.

Let $A^\bot=\{0,b\}$ and $A'^\bot=\{0,b'\}$ with $b$ and $b'$ of order~$2$. Since $(0,b)$ and $(b',0)$ cannot be in $I$, it follows that $(b',b)\in I$, i.e.\ $b\in A$, $b'\in A'$ and $b'=\tau(b)$. So, $I\cap A'^\bot\times A^\bot=\{(0,0),(b',b)\}$.

This also shows that $A^\bot\subseteq A$ and $A'^\bot\subseteq A'$, which means that the bilinear form on $A^\bot$ and $A'^\bot$ vanishes. This implies that either
\begin{enumerate}
\item[(a)] $b$ and $b'$ have norm $\sfrac{1}{2}\pmod{1}$ or
\item[(b)] $b$ and $b'$ have norm $0\pmod{1}$.
\end{enumerate}

The quotient $I^\bot/I$, isomorphic to $\Z_2\times\Z_2$ as a group, must contain two further elements besides $(0,0)+I$ and $(b',0)+I=(0,b)+I$. In the following, we identify the elements $\gamma_1$, $\gamma_2$ and $\gamma_3\in I^\bot/I$ and describe the corresponding extensions.

\subsubsection*{Glueing Type IIa}
Here, $\gamma_3\coloneqq(b',0)+I=(0,b)+I\in I^\bot/I$ has norm $\sfrac{1}{2}\pmod{1}$ so that the two further non-zero elements of $I^\bot/I\cong 2_{\II}^{+2}$ must both have norm $0\pmod{1}$. These are the two elements $\gamma_1=(c_1',c_1)+I$ and $\gamma_2=(c_2',c_2)+I$ that correspond to the extensions of $U$ to the self-dual \voa{}s $W^{(1)}$ and $W^{(2)}$, respectively. Neither $\gamma_1$ nor $\gamma_2$ is in $A'^\bot\times A^\bot$.

The self-dual vertex operator (super)algebra extensions $W^{(1)}$, $W^{(2)}$ and $V$ of $U$ corresponding to $\gamma_1$, $\gamma_2$ and $\gamma_3$, respectively, are
\begin{equation*}
W^{(i)}=\bigoplus_{\alpha+K\in A}(W^{\tau(\alpha+K)}\otimes V_{\alpha+K})\oplus(W^{c_i'+\tau(\alpha+K)}\otimes V_{c_i+\alpha+K})
\end{equation*}
for $i=1$, $2$ and
\begin{align*}
V&=\bigoplus_{\alpha+K\in A}W^{\tau(\alpha+K)}\otimes (V_{\alpha+K}\oplus V_{b+\alpha+K})\\
&=\bigoplus_{\alpha+K\in A}(W^{\tau(\alpha+K)}\oplus W^{b'+\tau(\alpha+K)})\otimes V_{\alpha+K}.
\end{align*}
In particular, neither extension $W^{(i)}$ contains modules corresponding to elements of $A_W\times A_K$ of the form $(a',0)$ with $a'\in A_W\setminus A'$ or $(0,a)$ with $a\in A_K\setminus A$. This shows that $W\otimes V_K$ is still a dual pair in $W^{(1)}$ and $W^{(2)}$. On the other hand, the decomposition of $V$ implies that $(W\oplus W^{b'})\otimes V_{K\cup(b+K)}$ is a dual pair in $V$.

The centraliser of the Cartan subalgebra $\hh=\C\vac\otimes\{k(-1)\ee_0\,|\,k\in K\otimes_\Z\C\}$ of $U_1$ in $W^{(i)}_1$ is again $\hh$ so that $\hh$ is also a Cartan subalgebra of $W^{(i)}_1$, and trivially also of $V_1=U_1$. Hence, these dual pairs describe the associated lattices (an odd lattice in the case of $V$) and Heisenberg commutants.

In view of \autoref{sec:orbifold} we note that, like for type~I, $U$ is the \fpvosa{} under an inner automorphism of $W^{(1)}$ and under one of $W^{(2)}$.

In summary, for glueing type~IIa one has:
\[
\begin{tabular}{l|l|l}
&Dual pair&Assoc.\ lattice\\\hline\hline
$U$ & $W\otimes V_K$ & $K$\\
$W^{(1)}$ & $W\otimes V_K$ & $K$\\
$W^{(2)}$ & $W\otimes V_K$ & $K$\\
$V$ & $(W\oplus W^{b'})\otimes V_{K\cup(b+K)}$ & $K\cup(b+K)$
\end{tabular}
\]

\subsubsection*{Glueing Type IIb}
Here, $\gamma_1\coloneqq(b',0)+I=(0,b)+I\in I^\bot/I$ has norm $0\pmod{1}$ and describes one extension of $U$ to a self-dual \voa{} $W^{(1)}$. There are two further non-zero elements $\gamma_2=(c_2',c_2)+I$ and $\gamma_3=(c_3',c_3)+I\in I^\bot/I$ of norm $0$ and $\sfrac{1}{2}\pmod{1}$, respectively, of which the former gives the other extension of $U$ to a self-dual \voa{} $W^{(2)}$.

The extension to $W^{(2)}$ looks exactly like in type~IIa. In particular, $W\otimes V_K$ is a dual pair in $W^{(2)}$, $\hh$ a Cartan subalgebra of $W^{(2)}_1$, $K$ the associated lattice of $W^{(2)}$ and $W$ its Heisenberg commutant. The same holds for the \svoa{} $V$, which is the extension
\begin{equation*}
V=\bigoplus_{\alpha+K\in A}(W^{\tau(\alpha+K)}\otimes V_{\alpha+K})\oplus(W^{c_3'+\tau(\alpha+K)}\otimes V_{c_3+\alpha+K})
\end{equation*}
corresponding to $\gamma_3$. The extension to $W^{(1)}$ is of the form
\begin{align*}
W^{(1)}&=\bigoplus_{\alpha+K\in A}W^{\tau(\alpha+K)}\otimes(V_{\alpha+K}\oplus V_{b+\alpha+K})\\
&=\bigoplus_{\alpha+K\in A}(W^{\tau(\alpha+K)}\oplus W^{b'+\tau(\alpha+K)})\otimes V_{\alpha+K}
\end{align*}
with dual pair $(W\oplus W^{b'})\otimes(V_K\oplus V_{b+K})$, which typically does not describe the associated lattice of $W^{(1)}$.

Indeed, a Cartan subalgebra of $W^{(1)}_1$ is given by the centraliser of the Cartan subalgebra $\hh$ of $U_1$, and, similarly to \autoref{lem:csaIII}, it follows from the decomposition of $W^{(1)}$ that this centraliser is $C_{W^{(1)}_1}(\hh)=\hh\oplus(W^{b'})_1\otimes\C\ee_0$. As for type~III, this Cartan subalgebra is typically strictly larger than $\hh$ so that the rank of the Lie algebra $W^{(1)}_1$ and of the associated lattice $L_1$ of $W^{(1)}$ is strictly larger than the rank of $K$ (for instance, if $c=8$, $16$ or $24$ and $W^{(1)}_1\neq\{0\}$).

The Heisenberg commutant $\tilde{W}$ of $W^{(1)}$ is typically a non-full subalgebra of $W$.

In view of \autoref{sec:orbifold} we note that $U$ is the \fpvosa{} under an inner automorphism of $W^{(2)}$ but under a non-inner one of $W^{(1)}$.

In summary, for glueing type~IIb, one typically has:
\[
\begin{tabular}{l|l|l|l}
&Dual pair&Lattice&Assoc.\\\hline\hline
$U$ & $W\otimes V_K$ & $K$ & yes\\\hline
\multirow{2}{*}{$W^{(1)}$} & $(W\oplus W^{b'})\otimes V_{K\cup b+K}$ & $K\cup b+K$ & no\\
 & $\tilde{W}\otimes V_{L_1}$ & $L_1$, $\rk(L_1)>\rk(K)$ & yes\\\hline
$W^{(2)}$ & $W\otimes V_K$ & $K$ & yes\\\hline
$V$ & $W\otimes V_K$ & $K$ & yes
\end{tabular}
\]


\subsection{Enumeration of Self-Dual \SVOA{}s}\label{sec:enumsvoa}

In analogy to \autoref{sec:enumvoa} for self-dual \voa{}s, it is possible to enumerate the even parts $U$ of nice, self-dual \svoa{}s for given Heisenberg commutant $W$ and associated lattice $K$. The idea is to count the inequivalent simple-current extensions of $W\otimes V_K$ to \voa{}s $U$, now with $\Rep(U)\cong\mathcal{C}(2_{\II}^{+2})$ rather than $\Vect$.

\medskip

As explained, these extensions are in bijection with certain isotropic subgroups $I=\{(\tau(x),x)\,|\,x\in A\}\leq A_W\times A_K$ of the discriminant form $A_W\times A_K$ associated with the pointed representation category of $W\otimes V_K$ satisfying $I^\bot/I\cong 2_{\II}^{+2}$. Here, $\tau\colon A\to A'$ is an anti-isometry between subgroups $A\leq A_K$ and $A'\leq A_W$, with the details depending on the glueing type (I, IIa, IIb or III).

In order to count the number of inequivalent extensions, we can proceed like in \autoref{sec:enumvoa}, only that now the extensions have representation category $\mathcal{C}(2_{\II}^{+2})$ rather that being self-dual. To this end, a slight modification of the double-coset expression presented there is needed.

More precisely, to enumerate the inequivalent anti-isometries $\tau\colon A\to A'$, we first determine the orbits of subgroups $A\leq A_K$ and $A'\leq A_W$ with the right properties (depending on the glueing type) under the action of $\overline{\O}(K)$ and $\overline{\Aut}(W)$, respectively.

Then, we count the equivalence classes of anti-isometries $\tau$ as in \autoref{sec:enumvoa}, but we replace $\overline{\O}(K)$ and $\overline{\Aut}(W)$ by the stabiliser of $A$ and $A'$, respectively, under the above actions. We can also determine the two \voa{} extensions $W^{(1)}$ and $W^{(2)}$ of $U$ corresponding to the two non-trivial isotropic subgroups of $I^\bot/I$ and hence, in particular, if a given edge is a loop.

In the case of a loop, we study the induced action of the stabiliser $S$ as in \autoref{sec:enumvoa} on $I^\bot/I$. If $S$ permutes the two subgroups that correspond to the extensions $W^{(1)}$ and $W^{(2)}$ of $U$, then $U$ belongs to a loop of type (3) as in \autoref{prop:order2conj}.


\section{Self-Dual Orbifolds of Order~2}\label{sec:orbifold}

In this section we describe orbifold constructions associated with nice, self-dual \voa{}s $V$ and automorphisms $g$ of $V$ of order~$2$ \cite{FLM88,EMS20a}.

As we saw in \autoref{sec:graphmeth}, the nice, self-dual \svoa{}s of central charge $c\in8\N$ correspond bijectively to the edges of the $2$-neighbourhood graph of the nice, self-dual \voa{}s of that central charge. And two such \voa{}s are $2$-neighbours if and only if they are their mutual orbifold constructions of order~$2$. We shall use this (together with the results in \autoref{sec:holvoa}) as one of the two main tools to determine the $2$-neighbourhood graph in central charge $24$ in \autoref{sec:class}.

As the \fpvosa{} $U=V^g$ has representation category $\Rep(V^g)\cong\mathcal{C}(2_{\II}^{+2})$, this section is complementary to the previous one, in which we started from $U$ and considered its self-dual extensions.

\medskip

Let $V$ be a nice, self-dual \voa{}, necessarily of central charge $c\in8\N$. Recall from \autoref{prop:holdecomp} the decomposition of $V$ in terms the dual pair $W\otimes V_L$, depending on a choice $\hh$ of Cartan subalgebra of $V_1$, with associated lattice $L$ and Heisenberg commutant $W$, as well as the corresponding structure of the reductive Lie algebra $V_1$ and its root system $\Phi$.

Let $g$ be an automorphism of $V$ of order~$2$. If $g$ has type~$0$, i.e.\ if the $L_0$-weights of the unique irreducible $g$-twisted $V$-module $V(g)$ are in $\frac{1}{2}\Z$, and if $V^g$ satisfies the positivity condition, i.e.\ if the $L_0$-weights of $V(g)$ are positive, then the \fpvosa{} $V^g$ has the representation category $\Rep(V^g)\cong\mathcal{C}(2_{\II}^{+2})$ and the unique self-dual \voa{} extension of $V^g$ other than $V$ is called the \emph{orbifold construction} $V^{\orb(g)}$ \cite{FLM88,EMS20a}.

In \autoref{sec:orbinner} and \autoref{sec:orbouter} we shall describe the orbifold constructions under inner and non-inner automorphisms, respectively, and shall see that they correspond to different glueing types (I, IIa, IIb or III) from \autoref{sec:glueing}.


\subsection{Automorphism Group}\label{sec:outv}

In the following, we study the automorphism group $\Aut(V)$ of $V$ \cite{HM22,BLS23}. We remark that, although we continue to assume this for simplicity, the results do not depend crucially on $V$ being self-dual.

The normal subgroup of \emph{inner automorphisms} of $V$ is $K\coloneqq\langle\{\e^{v_0}\,|\,v\in V_1\}\rangle$ and it contains the abelian subgroup $T\coloneqq\{\e^{v_0}\,|\,v\in\hh\}$ for a given choice of Cartan subalgebra $\hh$ of $V_1$. The \emph{outer automorphism group} of $V$ is the quotient $\Out(V)\coloneqq\Aut(V)/K$.

We make the somewhat technical assumption that the kernel of the restriction map $r\colon\Aut(V)\to\Aut(V_1)$ is contained in $T$. This is true, for instance, if $V$ has central charge $c=0$, $8$, $16$ or $24$ and $V_1\neq\{0\}$ \cite{BLS23}. Then the restriction map $r\colon\Aut(V)\to\Aut(V_1)$ induces an injective map
\begin{equation*}
\Out(V)\hookrightarrow\Out(V_1),
\end{equation*}
i.e.\ we may view $\Out(V)$ as an (in general proper) subgroup of $\Out(V_1)$. The outer automorphism group has been computed for all nice, self-dual \voa{}s of central charges $c=0$, $8$, $16$ and $24$ with $V_1\neq\{0\}$ \cite{BLS23}.

We describe the above embedding in more detail (cf.\ \cite{HM22}). Fixing also a choice of simple roots $\Delta\subseteq\Phi$ of $V_1$, $\Out(V_1)$ is isomorphic to
\begin{equation*}
\Out(V_1)\cong\Aut(V_1)_{\{\Delta\}}/\Aut(V_1)_{\Delta}\cong\O(\hh)_{\{\Delta\}},
\end{equation*}
the quotient of the setwise stabiliser by the pointwise stabiliser of $\Delta$. This identity holds in any reductive Lie algebra,\footnote{More precisely, we assume that the abelian part of this reductive Lie algebra is equipped with a non-degenerate bilinear form, which is always the case for the reductive Lie algebra $V_1$ of a nice \voa{} $V$ (see \autoref{sec:cartan}).} and for a semisimple Lie algebra precisely gives the diagram automorphisms. Consequently, we may regard $\Out(V)\leq\O(\hh)_{\{\Delta\}}$. If an automorphism of $V$ fixes the choice $\hh$ of Cartan subalgebra setwise, then it must also fix $W=\Com_V(\hh)$ and $V_L=\Com_V(W)$ setwise. This shows that the action of $\Aut(V)_{\{\Delta\}}\leq\Aut(V)_{\{\hh\}}$ on $\hh\cong L\otimes_\Z\C$ must also preserve the lattice $L$ and hence $L'$ as a set so that we may view
\begin{equation*}
\Out(V)\leq\O(L)_{\{\Delta\}}\leq\O(\hh)_{\{\Delta\}}\cong\Out(V_1).
\end{equation*}

We show in \cite{HM22} that any finite-order automorphism $g$ of $V$ is conjugate in $\Aut(V)$ to an element of $\Aut(V)_{\{\Delta\}}$. This can be used to determine the conjugacy classes in $\Aut(V)$, as we shall demonstrate below for the special case of inner automorphisms.

\medskip

In what follows, we describe the outer automorphism group $\Out(V)\leq\O(L)_{\{\Delta\}}$ based on the description of $V$ as a simple-current extension of $W\otimes V_L$ given in \autoref{sec:enumvoa}.

Recall that the setwise stabiliser $\Aut(V)_{\{\hh\}}$ of the Cartan subalgebra $\hh$ also fixes $L$ setwise. Thus, it induces a subgroup $\Aut(V)_{\{\hh\}}/\Aut(V)_{\hh}\eqqcolon\O(L)_V\leq\O(L)$ where $\Aut(V)_{\hh}$ is the pointwise stabiliser. We obtain the exact sequence
\begin{equation*}
1\longrightarrow\O(L)_0\longrightarrow\O(L)_V\longrightarrow S_L\longrightarrow1
\end{equation*}
where $S_L$ is the group that was defined at the end of \autoref{sec:enumvoa}

By choosing a set of simple roots $\Delta\subseteq\Phi$ we finally obtain the short exact sequence
\begin{equation*}
1\longrightarrow(\O(L)_0)_{\{\Delta\}}\longrightarrow\Out(V)\longrightarrow(S_L)_{\{\Delta\}}\longrightarrow1
\end{equation*}
for the stabiliser $\Out(V)\leq\O(L)_{\{\Delta\}}$ of $\Delta$. This provides an explicit description of $\Out(V)$ inside $\O(L)$ in terms of $\Aut(W)$ and $\Delta\subseteq\Phi$.

In \autoref{sec:class}, in the case of central charge $24$, we shall demonstrate explicitly how to describe the outer automorphism group $\Out(V)$ using information about $\Aut(W)$ taken from \cite{BLS23b,BLS23} and the vector-valued character of $W$.


\subsection{Inner Automorphisms}\label{sec:orbinner}

We now study orbifold constructions, starting with inner automorphisms, i.e.\ those in $K=\langle\{\e^{v_0}\,|\,v\in V_1\}\rangle$. Let $V$ continue to be a nice, self-dual \voa{} satisfying $\ker(r)\subseteq T=\{\e^{v_0}\,|\,v\in\hh\}$.

By \cite{HM22}, every inner automorphism of $V$ is conjugate to an automorphism in~$T$, i.e.\ of the form
\begin{equation*}
\sigma_h\coloneqq\e^{2\pi\i h_0}
\end{equation*}
for some $h\in\hh\cong L\otimes_\Z\C$. Such an automorphism $\sigma_h$ has finite order if and only if $h\in L\otimes_\Z\Q$. More precisely, the inner automorphisms of order dividing $n$ are parametrised by $\frac{1}{n}L$. In particular, $\sigma_h=\id_V$ if and only if $h\in L$. We obtain the following classification of the conjugacy classes in $\Aut(V)$ (cf.\ \cite{HM22,MS23}), noting again that the self-duality assumption is not essential.
\begin{prop}\label{prop:classesinner}
Let $V$ be a nice, self-dual \voa{} with associated lattice $L$. Suppose that $\ker(r)\subseteq T$. Then the map $h\mapsto\sigma_h$ is a bijection from the orbits of the action of $\O(L)_V$ on $(L\otimes_\Z\Q)/L$ to the conjugacy classes of finite-order, inner automorphisms of $V$.
\end{prop}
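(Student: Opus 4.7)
The plan is to verify that the map $h\mapsto[\sigma_h]$ is well-defined on $\O(L)_V$-orbits in $(L\otimes_\Z\Q)/L$ and then to establish surjectivity and injectivity separately, identifying $\hh$ with $L\otimes_\Z\C$ via the bilinear form throughout.

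For well-definedness, first observe that $\sigma_{h+\ell}=\sigma_h$ whenever $\ell\in L$, since $\sigma_\ell=\id_V$. Given $g\in\O(L)_V$, I would lift $g$ to an automorphism $\tilde g\in\Aut(V)_{\{\hh\}}$ inducing $g$ on $\hh$ (by the very definition of $\O(L)_V$); then $\tilde g\sigma_h\tilde g^{-1}=\exp(2\pi\i(g(h))_0)=\sigma_{g(h)}$, so the map factors through orbits. Surjectivity is immediate from the cited result of \cite{HM22} that every finite-order automorphism of $V$ is $\Aut(V)$-conjugate to an element of $\Aut(V)_{\{\Delta\}}$. An inner such automorphism must fix $\Delta$ pointwise, because the only Weyl group element stabilising $\Delta$ setwise is the identity; under the hypothesis $\ker(r)\subseteq T$ this forces it to lie in $T$ and hence to have the form $\sigma_h$ with $h\in L\otimes_\Z\Q$ (finite order).

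The main content is injectivity. Suppose $\phi\in\Aut(V)$ with $\phi\sigma_h\phi^{-1}=\sigma_{h'}$. The key observation is that both $\hh$ and $\phi(\hh)$ are Cartan subalgebras of the reductive Lie algebra $V_1^{\sigma_{h'}}$: the former since $[h',\hh]=0$, the latter because $\phi$ maps $\hh\subseteq V_1^{\sigma_h}$ into $V_1^{\sigma_{h'}}$. By the standard conjugacy theorem for Cartan subalgebras of a reductive Lie algebra, there is an inner automorphism of $V_1^{\sigma_{h'}}$ taking $\phi(\hh)$ to $\hh$, obtained as a product of exponentials $\exp(\ad v_i)$ with nilpotent $v_i\in V_1^{\sigma_{h'}}$. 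Lifting via the zero-mode, each $\exp(v_{i,0})$ is an inner automorphism of $V$ that commutes with $\sigma_{h'}$ (since $\sigma_{h'}v_i=v_i$), so their product $k\in K$ satisfies $k\sigma_{h'}k^{-1}=\sigma_{h'}$ and $(k\phi)(\hh)=\hh$. Hence $k\phi\in\Aut(V)_{\{\hh\}}$ induces some $\bar g\in\O(L)_V$ on $\hh$ with $\sigma_{\bar g(h)}=(k\phi)\sigma_h(k\phi)^{-1}=\sigma_{h'}$, whence $\bar g(h)-h'\in L$ as required.

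The main obstacle is this last step: passing from a Lie-algebraic conjugation inside the finite-dimensional $V_1^{\sigma_{h'}}$ to an automorphism of the infinite-dimensional $V$ commuting with $\sigma_{h'}$. The lifting mechanism $v\mapsto\exp(v_0)$ is what makes this passage possible, but it relies on the fact that the relevant conjugations between Cartan subalgebras of $V_1^{\sigma_{h'}}$ are built from exponentials of locally nilpotent operators, a standard input from the structure theory of reductive Lie algebras that is essential for converting a statement about a Lie subalgebra into one about the full automorphism group of the vertex operator algebra.
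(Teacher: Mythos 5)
Your argument is correct, and it is essentially a self-contained version of what the paper leaves implicit: the proposition is stated without an in-text proof and is attributed to \cite{HM22}, whose key input -- that every finite-order automorphism is conjugate into $\Aut(V)_{\{\Delta\}}$, and hence every finite-order inner automorphism into $T$ -- is exactly what you use for surjectivity. Your well-definedness step is immediate from the definition of $\O(L)_V$ as the group induced on $\hh$ by $\Aut(V)_{\{\hh\}}$, and your injectivity argument (both $\hh$ and $\phi(\hh)$ are self-centralising toral, hence Cartan, subalgebras of $V_1^{\sigma_{h'}}$; conjugate them by elementary automorphisms $\exp(\ad v_i)$ with $v_i\in V_1^{\sigma_{h'}}$; lift via $v\mapsto\exp(v_0)$ to an inner automorphism of $V$ commuting with $\sigma_{h'}$; conclude $k\phi\in\Aut(V)_{\{\hh\}}$ and $\bar g(h)-h'\in L$) is sound and is the natural way to supply the part the paper does not spell out. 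One step in your surjectivity sketch is compressed: from ``the induced Weyl element is trivial'' you should first conclude that $r(g')$ fixes $\hh$ pointwise (trivial on the span of the roots by the Weyl argument, trivial on the abelian part because it is inner), and then invoke the standard fact that an inner automorphism centralising a Cartan subalgebra lies in the adjoint torus, so that $r(g')=r(\sigma_{h''})$ for some $h''\in\hh$; only at that point does the hypothesis $\ker(r)\subseteq T$ give $g'\in T$, and finiteness of the order then forces $g'=\sigma_h$ with $h\in L\otimes_\Z\Q$ by the parametrisation of $T$ stated just before the proposition. Also note that for the lifts $\exp(v_{i,0})$ no nilpotency is needed for convergence, since $v_{i,0}$ preserves the finite-dimensional graded pieces of $V$; nilpotency only enters through the classical conjugacy theorem for Cartan subalgebras that you quote.
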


\medskip

In the following, let $\sigma_h$ be an inner automorphism in $T$ of finite order. With regard to the decomposition of $V$ into irreducible $W\otimes V_L$-modules, $\sigma_h$ only acts non-trivially on the $V_{\alpha+L}$, $\alpha+L\in L'/L$, and the \fpvosa{} is given by
\begin{equation*}
V^{\sigma_h}=\bigoplus_{\alpha+L\in L'/L}W^{\tau(\alpha+L)}\otimes V_{(\alpha+L)^h}
\end{equation*}
where $(\alpha+L)^h\coloneqq\{\beta\in\alpha+L\,|\,\langle\beta,h\rangle\in\Z\}$. The latter are cosets in $(L^h)'/L^h$ if they are non-empty, i.e.\ the $V_{(\alpha+L)^h}$ are irreducible modules for $V_{L^h}$. Indeed:
\begin{prop}\label{prop:asslatinner}
Let $V$ be a \strat{} \voa{} with associated lattice $L$ and Heisenberg commutant $W$. Let $\sigma_h\in T$ of finite order. Then $L^h=\{\beta\in L\,|\,\langle\beta,h\rangle\in\Z\}$ is the associated lattice of $V^{\sigma_h}$, and $W\otimes V_{L^h}$ is a dual pair in $V^{\sigma_h}$, i.e.\ $W$ is also the Heisenberg commutant of $V^{\sigma_h}$.
\end{prop}
\begin{proof}
$W$ and $V_L$ form a dual pair in $V$, i.e.\ they are their mutual commutants. Equivalently, we can write the conformal vector of $V$ as $\omega=\omega^W+\omega^L$ with $\omega^W\in W$ and $\omega^L\in V_L$ so that $W=\ker_V(\omega^L_0)$ and $V_L=\ker_V(\omega^W_0)$ \cite{FZ92,LL04}.

Since $\omega\in V^{\sigma_h}$, it follows that $\ker_{V^{\sigma_h}}(\omega^L_0)=V^{\sigma_h}\cap W=W$ and $\ker_{V^{\sigma_h}}(\omega^W_0)=V^{\sigma_h}\cap V_L=V_{L^h}$. This shows that $W$ and $V_{L^h}$ form a dual pair in $V^{\sigma_h}$. Moreover, since $\sigma_h$ is inner, $\rk(L^h)=\rk(L)=\rk(V_1)=\rk(V^{\sigma_h}_1)$. This shows that $L^h$ is the associated lattice of $V$.
\end{proof}

It is straightforward to determine the weight-$1$ Lie algebra of the \fpvosa{} $V^{\sigma_h}$ (which must also be reductive) and its root system explicitly. If the Heisenberg commutant $W$ satisfies the positivity condition,
\begin{equation*}
V_1^{\sigma_h}=\hh\oplus\!\!\bigoplus_{q\in(0,1]}\bigoplus_{\substack{\alpha+L\in L'/L\\\langle\alpha,\alpha\rangle/2\in q+\Z}}\!\!\!\!W^{\tau(\alpha+L)}_{1-q}\otimes\Bigl(\!\!\bigoplus_{\substack{\beta\in(\alpha+L)^h\\\langle\beta,\beta\rangle/2=q}}\!\!\!\!\C\ee_\beta\Bigr)
\end{equation*}
with Cartan subalgebra $\hh$ and root system
\begin{equation*}
\Phi^{\sigma_h}=\bigcup_{q\in(0,1]}\bigl\{\beta\in(L')^h\,\big|\,\langle\beta,\beta\rangle/2=q,\ W^{\tau(\beta+L)}_{1-q}\neq\{0\}\bigr\}\subseteq(L')^h.
\end{equation*}

\medskip

The unique irreducible $\sigma_h$-twisted $V$-module \cite{DLM00} is given by
\begin{equation*}
V(\sigma_h)=V^{(h)}=\!\!\!\bigoplus_{\alpha+L\in L'/L}\!\!\!W^{\tau(\alpha+L)}\otimes V_{\alpha+L}^{(h)}=\!\!\!\bigoplus_{\alpha+L\in L'/L}\!\!\!W^{\tau(\alpha+L)}\otimes V_{\alpha+h+L}
\end{equation*}
(see \cite{Li96} for details), where we may view $V_{\alpha+L}^{(h)}=V_{\alpha+h+L}$ as a (not necessarily irreducible) module for $V_{L^h}\subseteq V^{\sigma_h}$. If $W$ satisfies the positivity condition, then the (possibly zero) weight-$1$ space $V(\sigma_h)_1$ of the $\sigma_h$-twisted module $V(\sigma_h)$ is
\begin{equation*}
V(\sigma_h)_1=\delta_{h\in L'}\delta_{\langle h,h\rangle/2\in\Z}W^{\tau(-h+L)}_1\!\otimes\C\vac\oplus\!\!\bigoplus_{q\in(0,1]}\!\!\!\!\bigoplus_{\substack{\,\,\,\,\alpha+L\in L'/L\\\,\,\,\,\langle\alpha,\alpha\rangle/2\in q+\Z}}\!\!\!\!\!\!\!\!\!\!W^{\tau(\alpha+L)}_{1-q}\!\otimes\!\Bigl(\!\!\!\!\bigoplus_{\substack{\,\,\,\,\beta\in\alpha+h+L\\\,\,\,\,\langle\beta,\beta\rangle/2=q}}\!\!\!\!\!\!\!\!\C\ee_\beta\Bigr).
\end{equation*}
Note that $V(\sigma_h)_1$ is naturally a module for the Lie algebra $V_1^{\sigma_h}$ and the centraliser (i.e.\ the kernel) of the Cartan subalgebra $\hh$ under the module action is
\begin{equation*}
C_{V(\sigma_h)_1}(\hh)=\delta_{h\in L'}\delta_{\langle h,h\rangle/2\in\Z}W^{\tau(-h+L)}_1\otimes\C\vac.
\end{equation*}
The last equation holds even if $W$ does not satisfy the positivity condition, as long as we assume that $V(\sigma_h)$ has positive $L_0$-grading, which implies the same statement for $W^{\tau(-h+L)}$ (cf.\ the proof of \autoref{lem:csaIII}).

\medskip

We now specialise to the case that $\sigma_h$ has order $2$, i.e.\ $h\in\frac{1}{2}L\setminus L$. We assume that $V(\sigma_h)$ has positive $L_0$-grading or equivalently that $V^{\sigma_h}$ satisfies the positivity condition. This is true for all inner automorphisms if $c=0$, $8$, $16$ or $24$ and $V_1\neq\{0\}$ since then $W$ satisfies the positivity condition \cite{Hoe17} (see \autoref{sec:24}).

The $L_0$-grading of $V(\sigma_h)$ takes values in $\frac{1}{2}\Z+\langle h,h\rangle/2$. Hence, $\sigma_h$ has type~$0$ and thus affords an orbifold construction if and only if $\langle h,h\rangle/2\in\frac{1}{2}\Z$. We also assume this in the following. Since the order is prime, the orbifold construction has the simple form
\begin{equation*}
V^{\orb(\sigma_h)}=V^{\sigma_h}\oplus V(\sigma_h)_\Z.
\end{equation*}
The \fpvosa{} $U=V^{\sigma_h}$ satisfies the conditions of \autoref{sec:evensub}, i.e.\ it is a nice \voa{} with $\Rep(U)\cong\mathcal{C}(2_{\II}^{+2})$. The two self-dual, bosonic $\Z_2$-extensions of $V^{\sigma_h}$ are $V$ and $V^{\orb(\sigma_h)}$.

One important datum is the weight-$1$ Lie algebra
\begin{equation*}
V_1^{\orb(\sigma_h)}=V_1^{\sigma_h}\oplus V(\sigma_h)_1
\end{equation*}
(recall that for central charge $c=0$, $8$, $16$ or $24$ and $V_1^{\orb(\sigma_h)}\neq\{0\}$ this fixes the \voa{} $V^{\orb(\sigma_h)}$ uniquely up to isomorphism), which is a $\Z_2$-graded Lie algebra with zero component $V_1^{\sigma_h}$. The centraliser of $\hh$ in $V^{\orb(\sigma_h)}_1$,
\begin{equation*}
C_{V^{\orb(\sigma_h)}_1}(\hh)=\hh\oplus\delta_{h\in L'}\delta_{\langle h,h\rangle/2\in\Z}W^{\tau(-h+L)}_1
\end{equation*}
is a Cartan subalgebra of $V^{\orb(\sigma_h)}_1$ \cite{Kac90}.

If this centraliser is just $\hh$, i.e.\ if the Cartan subalgebra $\hh$ of $V_1$ and of $V^{\sigma_h}_1$ is also a Cartan subalgebra of $V^{\orb(\sigma_h)}_1$ (see types I and IIa below), then we can directly determine the associated lattice and the weight-$1$ Lie algebra of the orbifold construction $V^{\orb(\sigma_h)}$. Indeed, the associated lattice of $V^{\orb(\sigma_h)}$ is
\begin{equation*}
L^h\cup(h+L)_\text{ev},
\end{equation*}
and if $W$ satisfies the positivity condition, the root system of $V^{\orb(\sigma_h)}_1$ is
\begin{equation*}
\Phi^{\orb(\sigma_h)}=\Phi^{\sigma_h}\cup\bigcup_{q\in(0,1]}\bigl\{\beta\in h+L'\,\big|\,\langle\beta,\beta\rangle/2=q,\ W^{\tau(\beta-h+L)}_{1-1/d}\neq\{0\}\bigr\},
\end{equation*}
a subset of $(L')^h\cup(h+L')$.

On the other hand, if the rank of the associated lattice and the weight-$1$ Lie algebra increases, i.e.\ if the centraliser is strictly larger than $\hh$ (see type~IIb below), then we cannot compute these data directly. Rather, one can proceed by elimination (see \autoref{sec:orbcomp} for details in the case of central charge $24$).

\medskip

We make a case distinction. By definition, $(L^h)'=\langle L',h\Z\rangle$, which is an extension of $L'$ of index $1$ or $2$ since $h\in\frac{1}{2}L'$. We distinguish (cf.\ \autoref{sec:glueing}):
\begin{enumerate}
\item[(I)]$L\neq L^h$,
\item[(IIa)]$L=L^h$ and $\langle h,h\rangle/2\in\Z+\sfrac{1}{2}$,
\item[(IIb)]$L=L^h$ and $\langle h,h\rangle/2\in\Z$.
\end{enumerate}


\subsection*{Type I}

For this case we assume that $L\neq L^h$, which is equivalent to $h\notin L'$ and to $[L:L^h]=[(L^h)':L']=2$.

Then, since $L$ contains elements $\beta$ such that $\langle\beta,h\rangle\in\Z$ as well as elements with $\langle\beta,h\rangle\in\Z+\sfrac{1}{2}$, all $(\alpha+L)^h\neq\emptyset$ for $\alpha\in L'/L$ and they are cosets in $(L^h)'/L^h$. Hence, $V^{\sigma_h}$ is the simple-current extension
\begin{equation*}
V^{\sigma_h}=\bigoplus_{\alpha+L\in L'/L}W^{\tau(\alpha+L)}\otimes V_{(\alpha+L)^h}\supseteq W\otimes V_{L^h}
\end{equation*}
with associated lattice $L^h$. The tensor product $W\otimes V_{L^h}$ is a dual pair in $V^{\sigma_h}$ with $4\,|R(W)|=|(L^h)'/L^h|$, and so this case corresponds to glueing type~I in \autoref{sec:glueing}.

Since $h\notin L'$, the Cartan subalgebra $\hh$ of $V$ and $V^{\sigma_h}$ is also a Cartan subalgebra of $V^{\orb(\sigma_h)}$, i.e.\ $\rk(V^{\orb(\sigma_h)}_1)=\rk(V^{\sigma_h}_1)=\rk(V_1)$. In fact, we argued that the inverse-orbifold automorphism on $V^{\orb(\sigma_h)}$ is inner. The associated lattice of $V^{\orb(\sigma_h)}$ is $L^h\cup(h+L)_\text{ev}$, which is an index-$2$ extension of $L^h$.


\subsection*{Type II}

We assume that $L=L^h$, which is the case if and only if $h\in L'$ and if and only if $[L:L^h]=[(L^h)':L']=1$.

Then, since $\langle\beta,h\rangle\in\Z$ for all $\beta\in L$,
\begin{equation*}
(\alpha+L)^h=\begin{cases}\alpha+L&\text{if }\langle\alpha,h\rangle\in\Z,\\\emptyset&\text{if }\langle\alpha,h\rangle\notin\Z.\end{cases}
\end{equation*}
Hence, $V^{\sigma_h}$ is the simple-current extension
\begin{equation*}
V^{\sigma_h}=\bigoplus_{\substack{\alpha+L\in L'/L\\\langle\alpha,h\rangle\in\Z}}W^{\tau(\alpha+L)}\otimes V_{\alpha+L}\supseteq W\otimes V_L
\end{equation*}
with associated lattice $L^h=L$ and dual pair $W\otimes V_L$ in $V^{\sigma_h}$ with $|R(W)|=|L'/L|$. Therefore, this type corresponds to glueing type~II in \autoref{sec:glueing}.

\subsubsection*{Type IIa}
We assume that $\langle h,h\rangle/2\in\Z+\sfrac{1}{2}$, which corresponds to glueing type~IIa in \autoref{sec:glueing}. Since $\langle h,h\rangle/2\notin\Z$, the Cartan subalgebra $\hh$ of $V$ and of $V^{\sigma_h}$ is also a Cartan subalgebra of $V^{\orb(\sigma_h)}$, i.e.\ $\rk(V^{\orb(\sigma_h)}_1)=\rk(V^{\sigma_h}_1)=\rk(V_1)$. In fact, we argued that the inverse-orbifold automorphism on $V^{\orb(\sigma_h)}$ is inner.

Also the orbifold construction $V^{\orb(\sigma_h)}$ contains the dual pair $W\otimes V_{L}$ with associated lattice $L^h\cup(h+L)_\text{ev}=L$.

\subsubsection*{Type IIb}
We assume that $\langle h,h\rangle/2\in\Z$. This corresponds to glueing type~IIb in \autoref{sec:glueing}. Since $\langle h,h\rangle/2\in\Z$, the $\sigma_h$-twisted module typically has a non-trivial contribution to the Cartan subalgebra of $V^{\orb(\sigma_h)}$. This means that typically
\begin{equation*}
\rk(V^{\orb(\sigma_h)}_1)>\rk(V^{\sigma_h}_1)=\rk(V_1).
\end{equation*}
The orbifold construction $V^{\orb(\sigma_h)}$ contains the dual pair $\tilde{W}\otimes V_{\tilde{L}}$ with associated lattice $\tilde{L}$ where typically $\rk(\tilde{L})>\rk(L)$ and the Heisenberg commutant $\tilde{W}$ is a non-full subalgebra of $W$.

\medskip

We have described all orbifold constructions of order~$2$ for which the automorphism (or the inverse-orbifold automorphism) is inner. It remains to study those orbifold constructions where both are non-inner. By exhaustion (see \autoref{table:autcases}), they correspond to glueing type~III in \autoref{sec:glueing}.


\subsection{Non-Inner Automorphisms}\label{sec:orbouter}

We study orbifold constructions under non-inner automorphisms, corresponding to glueing types IIb or III in \autoref{sec:glueing}.

We still assume that $V$ is a nice, self-dual \voa{} satisfying $\ker(r)\subseteq T$ and let $g$ be a finite-order automorphism of $V$. It is shown in \cite{HM22} that any such $g$ is conjugate in $\Aut(V)$ to an automorphism in $\Aut(V)_{\{\Delta\}}$, the setwise stabiliser of the choice of simple roots $\Delta$. Hence, without loss of generality, we let $g\in\Aut(V)_{\{\Delta\}}\leq\Aut(V)_{\{\hh\}}$. As explained before, this implies that $g$ fixes each component of the dual pair $W\otimes V_L\subseteq V$ setwise.

It is rather difficult to describe the precise action of $g$ on $V$ if $g$ is non-inner, so we shall content ourselves with only studying the restriction $g|_{V_L}$ to the lattice \voa{} $V_L$. This will still determine the associated lattice of the \fpvosa{} $V^g$ and give us at least partial information about the root system of $V^g_1$.

\medskip

Automorphisms of lattice \voa{}s are well understood \cite{DN99,HM22}. As $g|_{V_L}$ fixes $\Delta$ (and $\hh$) setwise, the action of $g|_{V_L}$ on $\hh\cong L\otimes_\Z\C$ defines a lattice automorphism $\nu\in\O(L)_{\{\Delta\}}$, which coincides with the projection of $g$ to $\Out(V)=\Aut(V)/K$, viewed as a subgroup of $\O(L)_{\{\Delta\}}$.
\begin{prop}\label{prop:autconj}
Let $V$ be a nice, self-dual \voa{} with associated lattice $L$. Suppose that $\ker(r)\subseteq T$. Any finite-order automorphism of $V$ is conjugate to an automorphism $g\in\Aut(V)_{\{\Delta\}}$ whose restriction to $V_L$ is
\begin{equation*}
g|_{V_L}=\hat\nu\sigma_h
\end{equation*}
where $\nu\in\Out(V)\leq\O(L)_{\{\Delta\}}$ is the projection of $g$ to $\Out(V)$, $\hat\nu$ is a standard lift of $\nu\in\O(L)$ to $\O(\hat{L})\leq\Aut(V_L)$ and $h\in(L\otimes_\Z\Q)^\nu/\pi_\nu(L')$.
\end{prop}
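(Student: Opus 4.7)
The strategy is to reduce the problem to the well-understood classification of finite-order automorphisms of lattice \voa{}s. First, by the result of \cite{HM22} cited in the paragraph preceding the proposition, any finite-order $g\in\Aut(V)$ is conjugate in $\Aut(V)$ to an element of the setwise stabiliser $\Aut(V)_{\{\Delta\}}$, so we may assume $g\in\Aut(V)_{\{\Delta\}}$. Such a $g$ fixes $\Delta$, and hence the Cartan subalgebra $\hh$, setwise, so it preserves the Heisenberg subalgebra $H=\langle\hh\rangle$, its commutant $W=\Com_V(H)$, and the double commutant $\Com_V(W)\cong V_L$. Therefore $g$ restricts to an automorphism $g|_{V_L}\in\Aut(V_L)$ that still stabilises $\hh$.

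Next we invoke the classification of $\hh$-stable finite-order automorphisms of lattice \voa{}s from \cite{DN99,HM22}: every such automorphism of $V_L$ has the form $\hat\nu\,\sigma_h$ where $\nu\in\O(L)$, $\hat\nu$ is a standard lift of $\nu$ to $\O(\hat L)\leq\Aut(V_L)$, and $\sigma_h=\e^{2\pi\i h_0}$ with $h\in L\otimes_\Z\Q$ (needed for finite order). The induced $\hh$-action of $g|_{V_L}$ is $\nu$, which coincides with the image of $g$ in $\Out(V)\leq\O(L)_{\{\Delta\}}$ under the identification described in \autoref{sec:outv}.

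To normalise $h$, we use conjugation by the torus $T\subseteq\Aut(V)$. Using the relation $\hat\nu^{-1}\sigma_k\hat\nu=\sigma_{\nu^{-1}k}$ for $k\in\hh$ one obtains
\begin{equation*}
\sigma_k\,(\hat\nu\,\sigma_h)\,\sigma_k^{-1}=\hat\nu\,\sigma_{h+(\nu^{-1}-1)k}.
\end{equation*}
Since the image of $\nu^{-1}-1$ on $L\otimes_\Z\Q$ equals the orthogonal complement $((L\otimes_\Z\Q)^\nu)^\bot$, a suitable $k$ allows us to arrange $h\in(L\otimes_\Z\Q)^\nu$. The projection $\pi_\nu(h)$ is then invariant under any further $T$-conjugation. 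Combining this with the equality $\sigma_h|_{V_L}=\sigma_{h'}|_{V_L}$ iff $h-h'\in L'$ and the freedom to replace the standard lift $\hat\nu$ by an equivalent one yields exactly the quotient by $\pi_\nu(L')$ on the fixed subspace $(L\otimes_\Z\Q)^\nu$.

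The main technical obstacle is controlling the standard-lift ambiguity together with the non-torus conjugations. One must verify that replacing $\hat\nu$ by another standard lift of $\nu$ shifts $h$ precisely by an element of $\pi_\nu(L')$, and that the additional conjugations available inside $\Aut(V)$ arising from the simple-current decomposition $V=\bigoplus_{\alpha+L\in L'/L}W^{\tau(\alpha+L)}\otimes V_{\alpha+L}$ (which are not \emph{a priori} of the form $\sigma_k$) act on $g|_{V_L}$ only through the torus $T$ up to this standard-lift ambiguity. Granting this, no further collapse of the $h$-orbit occurs and the parametrisation by $\nu\in\Out(V)$ together with $h\in(L\otimes_\Z\Q)^\nu/\pi_\nu(L')$ is both complete and faithful.
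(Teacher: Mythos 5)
Your core argument is correct and is essentially the route the paper (implicitly, via the citations to \cite{HM22} and \cite{DN99}) takes: conjugate $g$ into $\Aut(V)_{\{\Delta\}}$, observe that such a $g$ preserves $\hh$, $W=\Com_V(\langle\hh\rangle)$ and $V_L=\Com_V(W)$, identify the induced action on $\hh$ with the image of $g$ in $\Out(V)\leq\O(L)_{\{\Delta\}}$, write $g|_{V_L}=\hat\nu\sigma_h$ using the structure of $\hh$-stabilising automorphisms of a lattice \voa{}, and push $h$ into $(L\otimes_\Z\Q)^\nu$ by conjugating with torus elements, the class of $h$ then being well defined modulo $\pi_\nu(L')$.

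One caution about your final paragraph: the proposition asserts only a conjugacy normal form, not that the pair $(\nu,\,h\bmod\pi_\nu(L'))$ parametrises conjugacy classes ``completely and faithfully''. The paper explicitly refrains from such a claim immediately after the statement, precisely because one restriction $g|_{V_L}$ may admit several non-conjugate extensions to $V$; so the step you label the main technical obstacle and then ``grant'' is neither needed for the proposition nor, as a classification claim, justified. The one item you flag that is actually relevant---that changing the standard lift only shifts $h$ by an element of $\pi_\nu(L')$---is a one-line check: two standard lifts of $\nu$ differ by some $\sigma_s$ whose associated character is trivial on $L^\nu$, i.e.\ $\pi_\nu(s)\in(L^\nu)'=\pi_\nu(L')$.
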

Here, $\pi_\nu\colon(L\otimes_\Z\C)\to(L\otimes_\Z\C)^\nu$ denotes the projection map $\pi_\nu=\frac{1}{|\nu|}\sum_{i=0}^{|\nu|-1}\nu^i$. It is possible to describe a system of representatives for the conjugacy classes of automorphisms of $V_L$ (see \cite{HM22}, a special case is treated in \autoref{prop:classesstandard}), however one $g|_{V_L}$ may have several non-conjugate extensions to $V$ so that we forgo a precise enumeration (cf.\ \autoref{prop:classesinner}) of non-inner automorphisms of $V$.

\medskip

In the following, let $g$ be a finite-order automorphism of $V$ of the form described in \autoref{prop:autconj}, restricting in particular to $g|_{V_L}=\hat\nu\sigma_h$.
\begin{prop}\label{prop:asslatouter}
Let $V$ be a nice, self-dual \voa{} with associated lattice $L$ and Heisenberg commutant $W$. Let $g\in\Aut(V)$ as in \autoref{prop:autconj}. Then $L^{\nu,h}=\{\alpha\in L^\nu\,|\,\langle\alpha,h\rangle\in\Z\}$ is the associated lattice of $V^g$, the Heisenberg commutant $\Com_{V^g}(V_{L^{\nu,h}})$ of $V^g$ contains $W$ and is independent of $h$, and together they form the dual pair $\Com_{V^g}(V_{L^{\nu,h}})\otimes V_{L^{\nu,h}}$ in $V^g$.
\end{prop}
\begin{proof}
Again, we write the conformal vector of $V$ as $\omega=\omega^W+\omega^L$ with $\omega^W\in W$ and $\omega^L\in V_L$ and $W=\ker_V(\omega^L_0)$ and $V_L=\ker_V(\omega^W_0)$. Then, $W^g=V^g\cap\ker_V(\omega^L_0)=\ker_{V^g}(\omega^L_0)=\Com_{V^g}(V_L^g)$ and similarly $V_L^g=\Com_{V^g}(W^g)$. Hence, $W^g\otimes V_L^g$ forms a dual pair in $V^g$.

It follows from the definition of the conformal vector for a lattice \voa{} that we can write $\omega^L\in\langle\hh\rangle\subseteq V_L$ as $\omega^L=\omega^{\hh^\nu}+\omega^{(\hh^\nu)^\bot}$ with $\omega^{\hh^\nu}\in\langle\hh^\nu\rangle$ and $\omega^{(\hh^\nu)^\bot}\in\langle(\hh^\nu)^\bot\rangle$, noting that $\langle\hh\rangle\cong\langle(\hh^\nu)^\bot\rangle\otimes\langle\hh^\nu\rangle$. Then $\omega=\omega^W+\omega^{\hh^\nu}+\omega^{(\hh^\nu)^\bot}$. This shows that also $\ker_{V^g}((\omega^W+\omega^{(\hh^\nu)^\bot})_0)$ and $\ker_{V^g}(\omega^{\hh^\nu}_0)$ form a dual pair in $V^g$.

Then, we note that $\Com_V(\Com_V(\hh^\nu))=\ker_V((\omega^W+\omega^{(\hh^\nu)^\bot})_0)=V_{L^\nu}$. Indeed, $\Com_V(\Com_V(\hh^\nu))$ must be a lattice \voa{} contained in $\Com_V(\Com_V(\hh))=V_L$, and it contains $V_{L^\nu}$. But $L^\nu$ is already the largest same-rank extension of $L^\nu$ inside $L$. Hence $\Com_V(\Com_V(\hh^\nu))=V_{L^\nu}$.

Finally, we show that $\Com_{V^g}(\Com_{V^g}(\hh^\nu))=\ker_{V^g}((\omega^W+\omega^{(\hh^\nu)^\bot})_0)=V_{L^{\nu,h}}$. Indeed, we write $\ker_{V^g}((\omega^W+\omega^{(\hh^\nu)^\bot})_0)=V^g\cap\ker_V((\omega^W+\omega^{(\hh^\nu)^\bot})_0)=V^g\cap V_{L^\nu}=V_{L^{\nu,h}}$. Since $\hh^g=\hh^\nu$ is a Cartan subalgebra of $V^g$, this implies that $L^{\nu,h}$ is the associated lattice of $V^g$.

Consequently, the Heisenberg commutant of $V^g$ is $\Com_{V^g}(\hh^\nu)=\ker_{V^g}(\omega^{\hh^\nu}_0)$, which we write as $V^g\cap\ker_{V}(\omega^{\hh^\nu}_0)$. Multiplying $g$ and equivalently $g|_{V_L}$ by an inner automorphism $\sigma_{h'}$ with $h'\in(L\otimes_\Z\Q)^\nu\subseteq\hh^\nu$ does not change the action of $g$ on $\ker_{V}(\omega^{\hh^\nu}_0)$. This shows that the Heisenberg commutant is independent of $h$.
\end{proof}

In contrast to the case of inner automorphisms, we do not fully determine the reductive weight-$1$ Lie algebra of the \fpvosa{} $V^g$ because we only study the restriction of $g$ to $V_L\subseteq V$. We can, however, determine the subset of roots of $V_1^g$ that descend from the norm-$1$ roots of $V$.

If $W$ satisfies the positivity condition, the norm-$1$ roots of $V_1$ are simply
\begin{equation*}
\Phi_1=\bigl\{\beta\in L\,\big|\,\langle\beta,\beta\rangle/2=1\bigr\}\subseteq L
\end{equation*}
with corresponding $1$-dimensional root spaces $\C\vac\otimes\C\ee_\beta\in W\otimes V_L$. This is nothing but the simply-laced root system of the lattice \voa{} $V_L$.

Then, omitting the tensor factor $\vac\in W$, on which $g$ acts trivially, the root spaces of $V_L^g$ are spanned by $\sum_{i=0}^{|g|-1}g^i(\ee_\beta)$, $\beta\in\Phi_1$, so that the roots of $V_L^g$ are
\begin{equation*}
\Phi_1^g=\Bigl\{\pi_\nu(\beta)\,\Big|\,\beta\in\Phi_1,\ \sum_{i=0}^{|g|-1}g^i(\ee_\beta)\neq0\Bigr\}\subseteq\pi_\nu(L),
\end{equation*}
recalling that $\hh^\nu$ is a Cartan subalgebra of $V_L^g$. This is a subset of the roots of $V^g$.

\medskip

Again, we would like to specialise to the case that $g\in\Aut(V)$ has order~$2$ and type~$0$ and that the unique irreducible $g$-twisted $V$-module $V(g)$ has positive $L_0$-grading. Then the orbifold construction is
\begin{equation*}
V^{\orb(g)}=V^g\oplus V(g)_\Z,
\end{equation*}
which, like $V$, is a nice, self-dual \voa{}. But note that neither the order of $g$ nor the value of the lowest $L_0$-weight of $V(g)$ are directly accessible from the description above, in contrast to inner automorphisms. A necessary condition is that $g|_{V_L}=\hat\nu\sigma_h$ has order dividing~$2$, which is equivalent to the order of $\nu$ dividing~$2$ and $h\in\frac{1}{2}(L')^\nu$ (with a slight modification if the lattice automorphism~$\nu$ exhibits order doubling, see \cite{HM22}).

We assume in the following that $g$ is non-inner, i.e.\ that $\nu\in\Out(V)\leq\O(L)_{\{\Delta\}}$ is non-trivial. We distinguish two cases:
\begin{enumerate}
\item[(IIb)]$L^\nu\neq L^{\nu,h}$,
\item[(III)]$L^\nu=L^{\nu,h}$.
\end{enumerate}
They correspond to the glueing types of the same names in \autoref{sec:glueing}.

As the inverse-orbifold automorphism is inner for type~IIb, it is easier to use the description in \autoref{sec:orbinner} for these orbifold constructions so that we only need to consider non-inner automorphisms of type~III (see \autoref{sec:orbcomp}).

\medskip

In \autoref{table:autcases}, we summarise the different types in this section and in \autoref{sec:glueing}. Here, $K=L^{\nu,h}$ is the associated lattice of $U=V^g$, and $W$, in contrast to the rest of this section, denotes the Heisenberg commutant of $V^g$ (rather than that of $V$).

\begin{table}[ht]\caption{Different glueing types of nice, self-dual \svoa{}s of central charge $c\in8\Ns$.}
\begin{tabular}{l|l|l|l|l|l}
Type & Orb. & Inv.\ orb. & \multicolumn{2}{l|}{Conditions on automorphisms} & Dual pair in $V^g$\\\hline\hline
I   & \multicolumn{2}{l|}{\multirow{2}{*}{both inner}} & \multicolumn{2}{l|}{$L^h\neq L$}                             & $|A_K|=4\,|A_W|$\\\cline{1-1}\cline{4-6}
IIa & \multicolumn{2}{l|}{}                            & \multicolumn{2}{l|}{$L^h=L$, $\langle h,h\rangle/2\notin\Z$} & \multirow{2}{*}{$|A_K|=|A_W|$} \\\cline{1-5}
IIb & inner & non-inner                                & $L^h=L$, $\langle h,h\rangle/2\in\Z$ & $L^{\nu,h}\neq L^\nu$ & \\\hline
III & \multicolumn{2}{l|}{both non-inner}              & \multicolumn{2}{l|}{$L^{\nu,h}=L^\nu$}                       & $4\,|A_K|=|A_W|$
\end{tabular}
\label{table:autcases}
\end{table}


\section{Classification}\label{sec:class}

In this section we finally classify the nice, self-dual \svoa{}s of central charge~$24$ by applying the two main tools developed in \autoref{sec:orbifold} (together with \autoref{sec:holvoa}) and in \autoref{sec:evensub}. We proceed in several steps.


\subsection{Step 1: Orbifold Constructions}\label{sec:orbcomp}

The starting point is the classification of the nice, self-dual \voa{}s $V$ of central charge~$24$ described in \autoref{sec:holvoa}. We assume that either $V_1\neq\{0\}$ or that $V$ is isomorphic to the moonshine module $V\cong V^\natural$. That is to say, we ignore hypothetical fake copies of $V^\natural$ (but see the discussion in \autoref{rem:fakeshort}). The results of \cite{BLS23} show that the technical assumption $\ker(r)\subseteq T$ made on $V$ in \autoref{sec:orbifold} is satisfied if $V_1\neq\{0\}$.

We then determine the $2$-neighbourhood graph with these \voa{}s as nodes (see \autoref{sec:graphmeth}) by considering all possible $\Z_2$-orbifold constructions (see \autoref{sec:orbifold}). For edges of types I, IIa or IIb we shall obtain the exact answer, while for those of type~III we shall initially only obtain an upper bound.

\subsubsection*{Automorphism Group}
Let $V$ be a nice, self-dual \voa{} of central charge~$24$ with associated lattice $L$ and Heisenberg commutant $W$. Following \autoref{sec:outv}, we realise the outer automorphism group $\Out(V)\leq\O(L)_{\{\Delta\}}$ concretely by using the description of $V$ as a simple-current extension of $W\otimes V_L$ in \autoref{sec:enumvoa}, information about $\Aut(W)$ taken from \cite{BLS23b,BLS23} and the vector-valued character of $W$ given in \autoref{sec:lie24}.

Suppose further that $V_1\neq\{0\}$. We saw in \autoref{prop:commconway} that the Heisenberg commutant is of the form $W\cong V_{\Lambda_\mu}^{\hat\mu}$ for $\mu\in\Co_0$ from one of $11$ conjugacy classes labelled A to K.

We determine the root system $\Phi$ of $V_1$ as described in \autoref{sec:lie24} by using the vector-valued characters of $V_L$ and $W$. For the lattice \voa{}~$V_L$ the vector-valued character simply consists of the theta series $\vartheta_{\alpha+L}(\tau)$ for the lattice cosets in $L'/L$ divided by the eta function $\eta(\tau)^{\rk(L)}$. The vector-valued character of $W\cong V_{\Lambda_\mu}^{\hat\mu}$ can be computed explicitly from the twisted characters of the lattice \voa{} $V_{\Lambda_\mu}$. This is described in \cite{Moe16,Moe21} for types B, C, F, G and H, where one obtains certain lifts of eta products, and poses no fundamentally greater difficulty for the other types. For example, for types D, E and I one obtains vector-valued modular forms described in \cite{HS14,Sch09}, with similar expressions for J and K, again as lifts of eta products (see also \autoref{sec:lie24}).

The automorphism groups of the lattice \voa{}s $V_L$ are understood \cite{DN99,HM22}. The automorphism groups $\Aut(W)$ for types B to K are described in \cite{BLS23b,BLS23}, but we need explicit realisations of $\overline{\Aut}(W)\leq\O(A_W)$ up to conjugacy, which we determine by exclusion. To this end, we first observe that $\overline{\Aut}(W)$ keeps the vector-valued character $\chi$ of $W$ invariant, i.e.\ must be in a subgroup $\O(A_W)_{\chi}\leq\O(A_W)$. We then compute all conjugacy classes of subgroups $H\leq\O(A_W)_\chi$ of the correct order. (There are usually only a few.) This allows us, for a choice of lattice $L$ in the correct genus and anti-isometry $\tau$, to compute the group $\O(L)_V$ and the roots $\Phi$ and thus the Weyl group of $V_1$ inside $\O(L)_V$. Then, we impose the condition on $H$ that for all choices of $L$ and $\tau$ the Weyl group must be contained in $\O(L)_V$. This way, we are able to determine for all cases a unique conjugacy class $H$ and hence the groups $\O(L)_V$ and $\Out(V)$ concretely.

\subsubsection*{Inner Automorphisms}
We begin by studying $\Z_2$-orbifold constructions under inner automorphisms, as described in \autoref{sec:orbinner}. They correspond to types I, IIa and IIb. We may assume that $V_1\neq\{0\}$ since otherwise $V$ would not afford any inner automorphisms. Furthermore, by \cite{HM22}, all \fpvosa{}s under inner automorphisms satisfy the positivity condition.

Using \autoref{prop:classesinner}, we enumerate the conjugacy classes of inner automorphisms $g$ of order~$2$ and type~$0$. Via $h\mapsto\sigma_h$ they are in bijection with the orbits $h$ of the action of
\begin{equation*}
\O(L)_V\quad
\text{on}
\quad\bigl\{h\in\tfrac{1}{2}L\setminus L\,\big|\,\langle h,h\rangle/2\in\tfrac{1}{2}\Z\bigr\}/L.
\end{equation*}
By \autoref{prop:order2conj}, this then enumerates the isomorphism classes of nice, self-dual \svoa{}s of central charge~$24$ of types I, IIa or IIb.

Recall that when counting \svoa{}s corresponding to loops in the neighbourhood graph we must distinguish between two cases, depending on whether there are two non-conjugate automorphisms $g$ with the same \fpvosa{} $V^g$ or just one. By simply looking at invariants of the \fpvosa{} $V^g$ like its weight-$1$ Lie algebra $V^g_1$, we see that the latter is the case for all but one pair of automorphisms. For the remaining pair of non-conjugate automorphisms (of the self-dual \voa{} $V$ with $V_1\cong A_{1,1}^2D_{6,5}$ such that $V^g_1\cong A_{1,1}A_{5,5}\C^2$, of type~I) we need to determine if their \fpvosa{}s $V^g$ are isomorphic. We shall see in \autoref{sec:extcomp} that this is indeed the case. Hence, they only correspond to one self-dual \svoa{} (of type~(2) in \autoref{prop:order2conj}).

\medskip

In order to list the self-dual \svoa{}s (see \autoref{sec:results}), we determine the isomorphism type (i.e.\ the root system) of the weight-$1$ Lie algebras (together with the affine structure, see \autoref{sec:cartan}) and the associated lattices of $V^g$ and $V^{\orb(g)}$, as is explained in \autoref{sec:orbinner}. These can be computed directly for types I and IIa because the Cartan subalgebra $\hh$ of $V$ and of $V^g$ is also a Cartan subalgebra of $V^{\orb(g)}$.

For type~IIb we can still compute $V^g_1$ directly, but we have to determine the type of the Lie algebra $V_1^{\orb(g)}$ by exclusion. It must be one of the $70$ non-zero Lie algebras on Schellekens' list and a (possibly trivial) $\Z_2$-extension of $V^g_1$. We also know its dimension and its rank. The latter is the dimension of the centraliser of $\hh$ in $V^{\orb(g)}_1$, which is a Cartan subalgebra of $V^g_1$. The possible fixed-point Lie subalgebras of simple\footnote{The extension of this result to semisimple or reductive Lie algebras is straightforward (see, e.g., \cite{Moe16}, where the semisimple case is discussed).} Lie algebras under finite-order automorphisms are classified in \cite{Kac90}. This exclusion method is particularly effective for small orders, and it fixes the type of $V_1^{\orb(g)}$ in all cases. As the nice, self-dual \voa{}s of central charge $24$ are uniquely determined by their weight-$1$ Lie algebras, provided these are non-zero, this also fixes the \voa{} $V^{\orb(g)}$ up to isomorphism (and hence all other data like the associated lattice).

Finally, we also list the Heisenberg commutant $W$ of $V^g$. As explained in \autoref{sec:evensub} and \autoref{sec:orbifold}, for all inner automorphisms, i.e.\ for types I, IIa and IIb, it coincides with the Heisenberg commutant of the original \voa{}~$V$. In particular, the Heisenberg commutants are of the form $W\cong V_{\Lambda_\mu}^{\hat\mu}$ where $\mu\in\O(\Lambda)$ is from one of the $11$ conjugacy classes listed in \autoref{table:11} (see \autoref{thm:commconway} below).

\subsubsection*{Non-Inner Automorphisms}
We continue by studying $\Z_2$-orbifold constructions under non-inner automorphisms (see \autoref{sec:orbouter}). As type~IIb was already treated using inner automorphisms, we shall restrict to type~III.

There are two (outer) automorphisms of the moonshine module $V^\natural$ of order~$2$, namely the conjugacy classes 2A and 2B in the monster group $M=\Aut(V^\natural)$ \cite{Gri82,CCNPW85}. Both have type~$0$ and the \fpvosa{} satisfies the positivity condition \cite{CN79}. The corresponding $\Z_2$-orbifold constructions
\begin{equation*}
V^\natural\hookleftarrow(V^\natural)^\mathrm{2A}\hookrightarrow V^\natural\quad\text{and}\quad V^\natural\hookleftarrow(V^\natural)^\mathrm{2B}\cong V_\Lambda^+\hookrightarrow V_\Lambda
\end{equation*}
are given in \cite{Yam05} and \cite{FLM88}, respectively, and have type~III. They correspond to the tensor product $\VB\otimes F$ of the shorter moonshine module \cite{Hoe95} with a free fermion and to the odd moonshine module $\VO$~\cite{DGH88,Hua96b}, respectively.

We list these two edges in \autoref{sec:results}. These are the only non-typical edges in central charge $24$, i.e.\ edges for which the automorphism is outer, but the ranks of the Lie algebras of $V$ and of $V^g$ are the same (cf.\ \autoref{sec:glueing}).

In the following, we assume that $V$ is not isomorphic to $V^\natural$, and, since we are ignoring hypothetical fake copies of $V^\natural$, we can assume that $V_1\neq\{0\}$.

Up to conjugacy, any non-inner automorphism $g$ of $V$ of order~$2$ restricts to a non-inner automorphism $g|_{V_L}$ of $V_L$ of order $2$, and we shall only enumerate the latter. Following \autoref{prop:autconj}, the non-inner automorphisms $g|_{V_L}$ of $V_L$ of order~$2$ are parametrised by elements $\nu\in\Out(V)\leq\O(L)_{\{\Delta\}}$ of order~$2$ and elements
\begin{equation*}
h\in\bigl(\tfrac{1}{2}(L')^\nu\bigr)/\bigl(\pi_\nu(L')\cap\tfrac{1}{2}(L')^\nu\bigr),
\end{equation*}
with a shift if the lattice automorphism $\nu$ exhibits order doubling (see \cite{HM22}). As we are restricting to type~III, we further assume that $L^\nu=L^{\nu,h}$ or equivalently that $h\in\pi_\nu(L')$. Hence, we only consider automorphisms of $V_L$ of the form
\begin{equation*}
g|_{V_L}=\hat\nu.
\end{equation*}
In particular, only $\nu\in\O(L)_{\{\Delta\}}$ of order~$2$ that do not exhibit order doubling on $V_L$ contribute to type~III. By \cite{HM22} (see also \cite{EMS20a}) we can enumerate automorphisms of $V_L$:
\begin{prop}\label{prop:classesstandard}
The conjugacy classes of standard lifts $\hat\nu$ in $\Aut(V_L)$ of order~2 descending from $\Aut(V)$ are in bijection with the conjugacy classes of order~2 in $\Out(V)\leq\O(L)_{\{\Delta\}}$ that do not exhibit order doubling on $V_L$.
\end{prop}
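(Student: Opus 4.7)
The plan is to construct the bijection as the natural map induced by the projection $\Aut(V_L)\to\O(L)$ (whose kernel is the torus of inner automorphisms generated by Heisenberg zero-modes) and then check each of its properties separately, relying on the standard-lift theory from \cite{DN99,HM22} and the description of $\Out(V)$ from \autoref{sec:outv}. Concretely, I would send the conjugacy class of a standard lift $\hat\nu\in\Aut(V_L)$ to the conjugacy class of its image $\nu\in\O(L)$. By the very definition of a standard lift, its order equals $|\nu|$ precisely when $\nu$ does not exhibit order doubling on $V_L$, and equals $2|\nu|$ otherwise. Hence imposing $\ord(\hat\nu)=2$ on the domain is equivalent to imposing that $\nu$ has order~$2$ without order doubling on the codomain.

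Next I would verify that the map lands in (conjugacy classes of) $\Out(V)\leq\O(L)_{\{\Delta\}}$, and that the ``descent from $\Aut(V)$'' condition on $\hat\nu$ is equivalent to membership in $\Out(V)$: if $\hat\nu=g|_{V_L}$ for some $g\in\Aut(V)_{\{\Delta\}}$, then its induced action on $\hh\cong L\otimes_\Z\C$ is the image of $g$ under $\Aut(V)\to\Out(V)\hookrightarrow\O(L)_{\{\Delta\}}$, so $\nu\in\Out(V)$. Conversely, given $\nu\in\Out(V)$ of order~$2$ without order doubling, I would invoke \autoref{prop:autconj} with $h=0$ to find some $g\in\Aut(V)_{\{\Delta\}}$ whose restriction to $V_L$ is a standard lift $\hat\nu$; this yields surjectivity.

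For injectivity, the key point is that two standard lifts of the same $\nu\in\O(L)$ differ by an element of the torus $T_L\subseteq\Aut(V_L)$ of inner automorphisms, and the no-order-doubling hypothesis ensures that both remain of order~$2$ only within a distinguished $T_L$-orbit. A standard cohomological argument (cf.\ \cite{DN99,HM22}) shows that any two such standard lifts are conjugate via an element of $T_L\subseteq\Aut(V_L)$, so they lie in the same $\Aut(V_L)$-conjugacy class. Combined with the fact that lifts of $\O(L)$-conjugate lattice automorphisms are $\Aut(V_L)$-conjugate (by lifting a conjugator $\mu\in\O(L)$ to some $\hat\mu\in\Aut(V_L)$ and then readjusting the resulting lift of $\mu\nu\mu^{-1}$ by a torus element to turn it back into a standard lift), this gives injectivity.

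The main obstacle is precisely this injectivity step: one must control the ambiguity in the choice of standard lift and show that all allowed adjustments are realised by $\Aut(V_L)$-conjugation, while simultaneously preserving the property of descending from $\Aut(V)$. Here one uses that the conjugating torus elements are themselves inner in $V_L$ and hence extend trivially to inner automorphisms of $V$ (via zero-modes of the Cartan subalgebra $\hh\subseteq V_1$), so the descent property is preserved under the conjugation that realises the injectivity. With these pieces the natural map is a well-defined bijection, as claimed.
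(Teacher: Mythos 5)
The paper does not actually prove this proposition: it is quoted from \cite{HM22} (see also \cite{EMS20a}), so your attempt has to be judged as a reconstruction of the argument behind that citation. The parts of your proposal that work are the standard ones: the order of a standard lift is $|\nu|$ or $2|\nu|$ according to order doubling; any two standard lifts of the same $\nu$ are conjugate (indeed by torus elements), and conjugating a standard lift of $\nu$ by a lift $\hat\mu$ of a conjugator yields a standard lift of $\mu\nu\mu^{-1}$, which gives the direction ``$\nu_1\sim\nu_2$ implies $\hat\nu_1\sim\hat\nu_2$''. Surjectivity is also essentially right, but note that you cannot simply ``invoke \autoref{prop:autconj} with $h=0$'': that proposition describes the shape $\hat\nu\sigma_h$ of the restriction of a \emph{given} automorphism; to realise a prescribed $\nu\in\Out(V)$ with $h=0$ you must additionally multiply the chosen preimage $g$ by the inner automorphism $\sigma_{-h}$ of $V$ (which is the same observation you use later for the descent property), a small but necessary repair.

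The genuine gap is the well-definedness of your map, i.e.\ the direction ``conjugate standard lifts $\Rightarrow$ conjugate $\nu$ in $\Out(V)$''. You ground it in ``the projection $\Aut(V_L)\to\O(L)$ whose kernel is the torus of inner automorphisms'', and no such projection exists in general: by \cite{DN99}, the inner automorphism group $K_L$ of $V_L$ is generated by $\e^{v_0}$ for \emph{all} $v\in(V_L)_1$, not only $v\in\hh$, and $\Aut(V_L)/K_L\cong\O(L)/W(L)$, where $W(L)$ is the reflection group of the norm-$2$ vectors of $L$; the associated lattices occurring here typically do contain norm-$2$ vectors, so $K_L$ is strictly larger than the torus. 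Consequently, conjugacy in $\Aut(V_L)$ only controls the class of $\nu$ modulo $W(L)$, and -- more seriously -- even if you upgrade this to conjugacy in $\O(L)$ or $\O(L)_{\{\Delta\}}$, the statement requires a conjugator inside $\Out(V)$, which may be a \emph{proper} subgroup of $\O(L)_{\{\Delta\}}$. What is missing is the argument that the relevant conjugations can be realised by automorphisms of $V_L$ that descend from $\Aut(V)$, so that they project into the group $\O(L)_V$ and, after stabilising $\Delta$, into $\Out(V)$ (this is exactly the structure set up in \autoref{sec:outv}); equivalently, one must read the left-hand conjugacy inside that subgroup. This compatibility is precisely what the paper imports from the conjugacy results for lattice \voa{} automorphisms in \cite{HM22,EMS20a}, and your proposal does not supply it.
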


As a word of caution, we point out that a given automorphism $\hat\nu$ of $V_L$ can have several non-conjugate extensions to an automorphism $g$ of $V$. Moreover, the extension could have order larger than~$2$, and the type of the extension could be non-zero, depending on the value of the $L_0$-weights of $V(g)$ modulo~$1$. In other words, a given automorphism $\hat\nu$ of $V_L$ could have no, one or several extensions to an automorphism $g$ of $V$ admitting a $\Z_2$-orbifold construction.

\medskip

Then, for each automorphism $\hat\nu$ from \autoref{prop:classesstandard} we consider all hypothetical orbifold constructions $V^{\orb(g)}$ of type~III, with constraints coming from the fact that the automorphism $g$ restricts to $\hat\nu$ on $V_L$. We are mainly interested in the Lie algebra structure of $V_1^g$ and $V_1^{\orb(g)}$, both of which we cannot compute directly. Instead, we determine a list of possible candidates for $V_1^g$ such that:
\begin{enumerate}
\item $V_1^g$ is the fixed-point Lie subalgebra of $V_1$ under an automorphism of order~$2$
\item projecting to $\nu\in\Out(V)\leq\Out(V_1)$.
\item The root system of $V_1^g$ contains $\Phi_1^g$, the latter only depending on $\hat\nu$.
\item The associated lattice $L^\nu$ of $V^g$ must be an extension of $Q_\g$ where $\g$ is the semisimple part of $V_1^g$ (see \autoref{sec:cartan}).
\end{enumerate}
This usually leaves a small (and maybe empty) list of possible $V_1^g$. In the next step, for each potential $V_1^g$, we determine a list of candidates for $V_1^{\orb(g)}$ or equivalently for $V^{\orb(g)}$ such that:
\begin{enumerate}
\setcounter{enumi}{4}
\item $V_1^{\orb(g)}$ is one of the $70$ non-zero\footnote{When $V_1^{\orb(g)}=\{0\}$, $V$ must be the Leech lattice \voa{} $V_\Lambda$, $g$ the lift of the $(-1)$-involution on $\Lambda$ and $V^{\orb(g)}$ the moonshine module $V^\natural$. The inverse-orbifold automorphism is the 2B-involution in the monster group. This gives the odd moonshine module $\VO$, which we treated above.} Lie algebras on Schellekens' list.
\item $V_1^g$ is the fixed-point Lie subalgebra of $V_1^{\orb(g)}$ under an automorphism of order~$2$
\item projecting to a non-trivial $\xi\in\Out(V^{\orb(g)})\leq\Out(V_1^{\orb(g)})$ that does not exhibit order doubling.
\item $L^\nu\cong\tilde{L}^\xi$ where $\tilde{L}$ is the associated lattice of $V^{\orb(g)}$.
\item Item (4) applied to the automorphism $\hat\xi$ of $V_{\tilde{L}}$ holds.
\item $l=(24+3\dim(V_1^g)-\dim(V_1)-\dim(V_1^{\orb(g)}))/24\in\N$, as this equals $\dim(V(g)_{1/2})$ (see \autoref{sec:chars}).
\item $V\cong V^{\orb(g)}$ if $l>0$ by \autoref{prop:suffloop}.
\item $V^g_1$ must contain the Lie algebra $(F^{\otimes l})_1\cong\so_l$ (with special cases for $l\leq2$) as an ideal (see \autoref{sec:split}).
\end{enumerate}
This usually leaves a small list of potential candidates for $V_1^{\orb(g)}$. The list could again be empty, which would imply that the considered automorphism $\hat\nu$ does not extend to an automorphism of $V$ of order~$2$ and type~$0$ or that the considered fixed-point Lie subalgebra $V_1^g$, if there were several candidates, was incorrect.

We then list all the potential edges obtained in this way. As we did not enumerate the automorphisms $g\in\Aut(V)$ up to conjugacy, but only their restrictions $\hat\nu\in V_L$, we simply obtain a list of possible combinations of $V_1$, $V^g_1$, $V^{\orb(g)}_1$ and the conjugacy classes of outer automorphisms $\nu\in\Out(V)$ and $\xi\in\Out(V^{\orb(g)})$. It is in principle possible that one such quintuple is realised by more than one edge, i.e.\ by several non-isomorphic $V^g$, but we shall see in \autoref{sec:extcomp} and \autoref{sec:unique} that this never happens.

Of course, what can and does happen is that an edge we list here does not actually exist. After all, we only studied the restriction of the automorphisms to $V_L$ and considered compatibility conditions based thereon. These spurious edges shall be removed in \autoref{sec:extcomp}. For now, we have obtained a rigorous upper bound on the edges of type~III, except that an edge could actually split up into several edges with the same invariants.

Finally, we mention one further argument to reduce the number of potential edges of type~III:
\begin{enumerate}
\setcounter{enumi}{12}
\item If there is an edge with associated lattice $K$ of $U=V^g$, there must also be an edge with the associated lattice being any other lattice $\tilde{K}$ in the genus of $K$ and with the same Heisenberg commutant.
\end{enumerate}
Indeed, we can extend $W\otimes V_{\tilde{K}}$ in the same way to a nice \voa{} $\tilde{U}$ of central charge $24$ with representation category $\mathcal{C}(2_{\II}^{+2})$ as we can extend $W\otimes V_K$ to $U$ (see \autoref{sec:evensub}). By removing edges corresponding to partially realised genera, we can eliminate some further entries in the list of potential edges.

\medskip

We state the results obtained so far. It is not very enlightening to list all the potential edges of type~III, but we record their numbers (cf.\ \autoref{table:genera}). Which edges are spurious and which are not, will be determined in \autoref{sec:extcomp} below. The non-spurious edges are listed in detail in \autoref{sec:results}.
\[
\setlength{\tabcolsep}{3.5pt}
\begin{tabular}{l|rrrrrrrrrrrrrrrr}
Commutant      & B    & D    & G   & J   & M   & N   &$\Xi$& O    & O    & O   & P   & Q   & R   & S   & T   \\\hline
Neighbours     & A    & B    & C   & G   & A   & B   & B   & D/E  & D    & E   & C   & F   & J   & A/L & L   \\\hline\hline
Correct edges  & $24$ & $56$ & $3$ & $2$ & $1$ & $1$ & $0$ & $10$ & $14$ & $8$ & $1$ & $1$ & $2$ & $1$ & $1$ \\\hline
Spurious edges & $0$  & $14$ & $3$ & $0$ & $0$ & $0$ & $1$ & $0$  & $1$  & $0$ & $0$ & $0$ & $0$ & $0$ & $0$
\end{tabular}
\]
The columns are labelled by the types of the Heisenberg commutants of $U=V^g$ and of the \voa{} neighbours (see \autoref{thm:commconway} below). All spurious edges are loops. This is not particularly surprising as the reduction methods employed above are less restrictive for loops.

We comment on the spurious edge with commutant labelled by the letter $\Xi$. Its Heisenberg commutant and associated lattice would be as follows (cf.\ \autoref{table:genera}):
\[
\begin{tabular}{l|r||l|r||l|l}
Commutant & $c$ & Lattice genus & No.\ & Neighb.\ & Ranks \\\hline
$\mathcal{C}(2_{\II}^{+6}4_{\II}^{+4})$ & 16 & $\II_{8,0}(2_{\II}^{+4}4_{\II}^{+4})$ & 1 & B/B & $16,8,16$
\end{tabular}
\]
We shall see in the proof of \autoref{thm:commconway} that a \voa{} with this representation category and central charge does not appear at all as Heisenberg commutant of the even part $U$ of a nice, self-dual \svoa{} of central charge $24$. (In view of that theorem we point out that there is no element $\mu\in\Co_0$ such that $V_{\Lambda_\mu}^{\hat\mu}$ has the listed properties.)


\subsection{Step 2: Heisenberg Commutants}

In order to apply the methods in \autoref{sec:extcomp} to compute the exact number of edges of type~III (and for the one particular edge of type~I), we first need to state some intermediate results on the Heisenberg commutants appearing in the \voa{}s $U=V^g$, which are also of interest in their own right.
\begin{thm}\label{thm:commconway}
Let $U$ be the even part of a nice, self-dual \svoa{} of central charge 24 with at least one irreducible $U$-module having non-zero weight-1 space. Then the Heisenberg commutant $W=\Com_U(V_K)$ of $U$ (where $K$ is the associated lattice) is isomorphic to
\begin{equation*}
W\cong V_{\Lambda_\mu}^{\hat\mu}
\end{equation*}
for one of 18 conjugacy classes $\mu\in\O(\Lambda)=\Co_0$ of automorphisms of the Leech lattice $\Lambda$, namely the 11 classes A to K listed in \autoref{table:11} and the seven classes M to S listed in \autoref{table:19}.
\end{thm}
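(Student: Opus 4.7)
The plan is to decompose the proof according to the four glueing types (I, IIa, IIb, III) identified in Section \ref{sec:evensub} for the even part $U = V^{\bar 0}$ of a nice, self-dual \svoa{} $V$ of central charge~24, and to show in each case that the Heisenberg commutant $W = \Com_U(V_K)$ is isomorphic to $V_{\Lambda_\mu}^{\hat\mu}$ for a specific involution $\mu \in \Co_0$.

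First, suppose we are in type I, IIa, or IIb. By the analyses in Sections \ref{sec:orbinner} and \ref{sec:evensub}, $U$ arises as the fixed-point subalgebra of one of the self-dual \voa{} neighbours $W^{(i)}$ under an \emph{inner} involution, and the Heisenberg commutant of $U$ coincides with that of $W^{(i)}$. The standing hypothesis that some irreducible $U$-module has non-zero weight-$1$ space forces $W^{(i)}_1 \neq \{0\}$ for at least one $i$; but the existence of a non-trivial inner involution already forces $W^{(i)}_1 \neq \{0\}$, so $W^{(i)}$ is a Schellekens \voa{} of central charge $24$. By the classification recalled in Section~\ref{sec:holvoa}, its Heisenberg commutant is $V_{\Lambda_\nu}^{\hat\nu}$ for one of the 11 classes A--K of Table~\ref{table:11}. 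Next, for type III we split on whether some $W^{(i)} \cong V^\natural$. If so, the only involutions of $V^\natural$ are 2A and 2B; the 2A-orbifold produces $\VB \otimes F$, which has all irreducible $U$-modules of trivial weight-1 space and is excluded by hypothesis, while the 2B-orbifold gives $U \cong V_\Lambda^+$. The latter has trivial associated lattice, so $W \cong V_\Lambda^+ = V_\Lambda^{+\id} = V_{\Lambda_\mu}^{\hat\mu}$ with $\mu = -\id_\Lambda$, which is class S.

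It remains to treat type III with $V_1 \neq \{0\}$ on at least one side. By Proposition~\ref{prop:autconj} we may conjugate $g$ so that $g|_{V_L} = \hat\nu$ for a non-inner $\nu \in \Out(V) \leq \O(L)_{\{\Delta\}}$ of order~$2$ not exhibiting order doubling, and by Proposition~\ref{prop:asslatouter} the Heisenberg commutant of $V^g$ depends only on $\nu$ and contains the Heisenberg commutant $V_{\Lambda_{\nu_V}}^{\hat\nu_V}$ of $V$ itself. Using the explicit descriptions of $\Out(V)$ in Section~\ref{sec:outv} and \cite{BLS22} and the finite candidate list of type III edges enumerated in Section~\ref{sec:orbcomp}, we run through all admissible pairs $(V,\nu)$. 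For each such pair the central charge of $W$, its discriminant form (read off from the requirement $\Rep(W) \cong \mathcal{C}(\overline{K'/K})$ and the known genus of $K$), and its leading Fourier coefficients are computable directly. Comparing with the twisted characters of the Leech lattice \voa{} (available in \cite{Sch06,Sch09,HS14,Moe21} and extendable to the new types), we identify $W$ with $V_{\Lambda_\mu}^{\hat\mu}$ for a unique conjugacy class $\mu \in \Co_0$ of order~$2$, yielding precisely the seven new Frame shapes $1^{-8}2^{16}$, $1^8 2^{-8} 4^8$, $2^4 4^4$, $1^4 2^1 3^{-4} 6^5$, $1^2 2^1 5^{-2} 10^3$, $2^1 4^1 6^1 12^1$, and the $-\id$ class, labelled M--S. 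Finally, we eliminate the spurious candidate $\Xi$ of discriminant form $2_{\II}^{+6} 4_{\II}^{+4}$ flagged in Section~\ref{sec:orbcomp}: no involution in $\Co_0$ produces a coinvariant lattice with these invariants, and the corresponding would-be outer involutions on Schellekens \voa{}s of family~B can be ruled out by a direct check on weight-1 Lie algebra structure.

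The main obstacle is the identification step for type III: one must pass from an \emph{abstractly} constructed \voa{} $W = \Com_{V^g}(\hh^\nu)$ to a \emph{concrete} isomorphism $W \cong V_{\Lambda_\mu}^{\hat\mu}$. This rests on two pillars --- explicit knowledge of $\Out(V)$ for every Schellekens \voa{} (which is available thanks to \cite{BLS22}, but delicate to use since a single lattice automorphism $\hat\nu$ may have several non-conjugate extensions to $V$), and a rigidity principle saying that a \strat{} \voa{} of small central charge with prescribed pointed representation category and leading character data is determined up to isomorphism, which in the cases at hand is enforced by matching the vector-valued characters of $W$ against those of orbifolds of $V_\Lambda$ by involutions. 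A subsidiary obstacle is ensuring that each newly appearing $\mu$ is genuinely realised by an element of $\Co_0$ rather than of a larger automorphism group of the corresponding commutant, for which the Frame-shape data serves as the decisive invariant.
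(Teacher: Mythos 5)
Your treatment of types I, IIa, IIb and of the $V^\natural$ cases matches the paper, but the identification step for the remaining type~III cases contains a genuine gap. You pass from the abstract commutant $W=\Com_{V^g}(\langle\hh^\nu\rangle)$ to $V_{\Lambda_\mu}^{\hat\mu}$ by invoking a ``rigidity principle'' that a \strat{} \voa{} with prescribed central charge, pointed representation category and leading character data is determined up to isomorphism. No such principle is available, and the paper is structured precisely to avoid needing one: the pair S/T shows that central charge plus representation category do not determine the \voa{}, and the fake-copy problem (for $V^\natural$, $\VB$, and type~T) shows that even adding the full vector-valued character is not known to pin down the isomorphism type. The paper's actual argument is different: for each type~III edge it picks the (unique) $\mu\in\Co_0$ whose $V_{\Lambda_\mu}^{\hat\mu}$ has the right central charge and representation category, classifies \emph{all} extensions $\tilde{U}$ of $V_{\Lambda_\mu}^{\hat\mu}\otimes V_K$ with $\Rep(\tilde{U})\cong\mathcal{C}(2_{\II}^{+2})$ (a finite isotropic-subgroup computation), notes that each such $\tilde{U}$ is the even part of a self-dual \svoa{} and hence determines a pair (neighbour $\tilde{V}$, outer class $\tilde\nu$) in the candidate list, and verifies that \emph{every} pair $(\tilde{V},\tilde\nu)$ occurring among the potential type~III edges is realised by one of these concrete extensions. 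Since by \autoref{prop:asslatouter} the Heisenberg commutant of $V^g$ depends only on $(V,\nu)$, the abstract $W$ must coincide with $V_{\Lambda_\mu}^{\hat\mu}$. Your proof needs either this realisation argument or a proved substitute for the rigidity claim; character matching alone does not yield an isomorphism.

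A secondary problem is your elimination of the spurious case $\Xi$ (representation category $\mathcal{C}(2_{\II}^{+6}4_{\II}^{+4})$, central charge $16$). Observing that no $\mu\in\Co_0$ has a coinvariant lattice with these invariants is only a consistency check and is circular as a non-existence proof, since a priori $W$ need not come from $\Co_0$ at all --- that is what the theorem is asserting. Likewise, a ``direct check on weight-1 Lie algebra structure'' does not rule this case out: the candidate automorphism of the Schellekens \voa{} with $V_1\cong A_{1,2}^{16}$ and fixed points $A_{1,4}^8$ survives all the Lie-theoretic constraints of \autoref{sec:orbcomp}, which is exactly why it appears as a spurious edge there. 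The paper instead eliminates it by a genus-propagation argument: the relevant outer involution class (Frame shape $2^8$ with $\det(L^\nu)/\det(L)=4$) also occurs on the associated lattices of the \voa{}s B11, B15 and B16, so if the edge existed at B17 there would have to be type~III edges at those nodes as well; these were already excluded, a contradiction.
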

This means in particular that the spurious edge labelled by $\Xi$ in the above table does not exist.

The analogous result for \voa{}s (see \autoref{prop:commconway}) was proved in \cite{Hoe17,Lam20}. There, only the $11$ Heisenberg commutants of type A to K listed in \autoref{table:11} appear.

\begin{table}[ht]\caption{The $19$ Heisenberg commutants appearing in the nice, self-dual \svoa{}s of central charge~$24$.}
\begin{tabular}{l|l|r|l|r}
Name & Repr.\ cat.\  & $c$ & $\mu\in\O(\Lambda)$ / $M$ & $\Lambda^\mu$ \cite{HM18} \\\hline\hline
A--K & \multicolumn{3}{l}{see \autoref{table:11}}\\\hline
M & $\mathcal{C}(2_{\II}^{+10})$              & 16 & $1^{-8}2^{16}$       &  14 \\
N & $\mathcal{C}(2_{\II}^{+8}4_{\II}^{+2})$   & 16 & $1^82^{-8}4^8$       &  14 \\
O & $\mathcal{C}(2_{\II}^{+4}4_0^{+6})$       & 16 & $2^44^4$             &  21 \\
P & $\mathcal{C}(2_{\II}^{-8}3^{-3})$         & 18 & $1^42^13^{-4}6^5$    &  33 \\
Q & $\mathcal{C}(2_{\II}^{-6}5^{-3})$         & 20 & $1^22^15^{-2}10^3$   & 100 \\
R & $\mathcal{C}(2_{\II}^{+2}4_4^{+4}3^{+4})$ & 20 & $2^14^16^112^1$      & 135 \\
S & $\mathcal{C}(2_{\II}^{+2})$               & 24 & $1^{-24}2^{24}$ / 2B & 290 \\\hline
T & $\mathcal{C}(2_{\II}^{+2})$               & 24 & 2A                   &
\end{tabular}
\label{table:19}
\end{table}

In addition, there is the Heisenberg commutant appearing in the \voa{} $(V^\natural)^\mathrm{2A}$, the even part of $\VB\otimes F$ where $\VB$ is the shorter Moonshine module, and labelled by the letter T. Here, as for type~S corresponding to the odd moonshine module $\VO$, since the associated lattice is trivial, the Heisenberg commutant is actually the whole even \vosa{}.

The Heisenberg commutants of types S and T are the only non-isomorphic ones appearing in the self-dual \svoa{}s of central charge $24$ that have the same central charge and representation category (but different vector-valued characters, see \autoref{sec:chars} and \autoref{table:969}).

Moreover, there could exist fake copies of $(V^\natural)^\mathrm{2A}$ (again being isomorphic to their respective Heisenberg commutants, which would have the same central charge and representation category as for S and T and even the same vector-valued character as for T) or equivalently fake copies of the shorter moonshine module $\VB$, provided there are fake copies of the moonshine module $V^\natural$ (see \autoref{rem:fakeshort}).\footnote{There cannot be fake copies of $(V^\natural)^\mathrm{2B}$ (or of the odd moonshine module $\VO$) as they would have to yield $V_\Lambda$ as one extension and hence be isomorphic to $V_\Lambda^+\cong(V^\natural)^\mathrm{2B}$ \cite{FLM88}.}

\begin{proof}[Proof of \autoref{thm:commconway}]
It follows from \autoref{prop:asslatinner} or the corresponding statement in \autoref{sec:evensub} that for types I, IIa and IIb only Heisenberg commutants of the form $V_{\Lambda_\mu}^{\hat\mu}$ for the $11$ conjugacy classes $\nu\in\O(\Lambda)$ in \autoref{prop:commconway} can appear.

We consider type~III, i.e.\ non-inner automorphisms $g$. As stated in \autoref{prop:asslatouter}, the Heisenberg commutant of $V^g$ only depends on the self-dual \voa{} $V$ and the outer automorphism $\nu\in\Out(V)\leq\O(L)_{\{\Delta\}}$.

Given a (potential) edge, let $K=L^\nu$ be the associated lattice of $U=V^g$ and $W$ the Heisenberg commutant. Then $U$ is a simple-current extension of the dual pair $W\otimes V_K$. The central charge of $W$ is $24-\rk(K)$ and the representation category is the pointed representation category $\mathcal{C}(\overline{K'/K}\times2_{\II}^{+2})$ (see \autoref{sec:evensub}).

For all edges (except for the spurious edge $\Xi$ mentioned above, which we shall have to treat separately), there is an element $\mu\in\O(\Lambda)=\Co_0$ (unique up to conjugacy) such that the \voa{} $V_{\Lambda_\mu}^{\hat\mu}$ has the same central charge and representation category (see \cite{Lam20}) as $W$. We determine all extensions of $V_{\Lambda_\mu}^{\hat\mu}\otimes V_K$ of glueing type~III to a \voa{} $\tilde{U}$ with representation category $\mathcal{C}(2_{\II}^{+2})$, which then is the even part of a nice, self-dual \svoa{} of central charge $24$, i.e.\ must appear in the list of (potential) edges. We recall that, as the representation category of $V_{\Lambda_\mu}^{\hat\mu}\otimes V_K$ is pointed, this amounts to enumerating certain isotropic subgroups (see, e.g., \cite{EMS20a,Moe16} and in particular \autoref{sec:extcomp}, where we are additionally interested in the precise number of inequivalent extensions). For each such extension $\tilde{U}$ we are able to identify to which entry in the list of (potential) edges it belongs. In particular, we determine the two \voa{} neighbours $\tilde{V}$ (with associated lattice $\tilde{L}$) and the outer automorphisms $\tilde\nu\in\Out(\tilde{V})$.

We do not realise all of the potential edges (some of them are spurious), but crucially all combinations of self-dual \voa{} $\tilde{V}$ and outer automorphism $\tilde\nu$ (again, except for the spurious edge $\Xi$ mentioned above) are realised by an extension $\tilde{U}$ of $V_{\Lambda_\mu}^{\hat\mu}\otimes V_K$ of type~III. But, since the Heisenberg commutant of $\tilde{U}$ is fully determined by $\tilde{V}$ and $\tilde\nu$ (by \autoref{prop:asslatouter}), this shows that the original Heisenberg commutant $W$ in $U$ already had to be isomorphic to $V_{\Lambda_\mu}^{\hat\mu}$.

\smallskip

It remains to study the potential edge (or edges) of type~$\Xi$ with the Heisenberg commutant of $U$ having representation category $\mathcal{C}(2_{\II}^{+6}4_{\II}^{+4})$ and central charge $16$. We shall show that such a \voa{} does not exist.\footnote{A necessary condition for this, which we readily verify, is that there is no element in $\mu\in\Co_0$ such that $V_{\Lambda_\mu}^{\hat\mu}$ has these properties.}

Suppose that this edge exists. Then we know from \autoref{sec:orbcomp} that it must be realised by an automorphism $g$ of order~$2$ of the Schellekens \voa{} $V$ with $V_1\cong A_{1,2}^{16}$ (called B17 in \cite{Hoe17}) satisfying $V_1\cong A_{1,4}^8$. The \voa{} $V$ belongs to family~B in \autoref{table:11}, i.e.\ its Heisenberg commutant is of type~B or equivalently its associated lattice $L$ is in the genus $\gBB$. The restriction of $g$ to $V_L$ is of the form $\hat\nu$ where $\nu\in\Out(V)\leq\O(L)_{\{\Delta\}}$ has Frame shape $2^8$ and satisfies $\det(L^\nu)/\det(L)=4$.\footnotemark{} Analogously to the above arguments (and those in \autoref{prop:asslatouter}), we can argue that if this lattice automorphism $\nu$ is also realised on other lattices $\tilde{L}$ in the same genus $\gBB$, now called $\tilde\nu\in\O(\tilde{L})$, then $\hat{\tilde\nu}\in\Aut(V_{\tilde{L}})$ also extends to an automorphism $\tilde{g}$ of order~$2$ and type~$0$ of the self-dual \voa{} $\tilde{V}$ in family~B whose associated lattice is $\tilde{L}$, realising an edge of glueing type~III. Such lattice automorphisms exist on the associated lattices of the Schellekens \voa{}s B11, B15, B16 and B17 (cf.~\autoref{table:orbifoldB}), but for the former three we excluded in \autoref{sec:orbcomp} that an edge of type~$\Xi$ exists. Hence, the considered potential edge incident with the Schellekens \voa{} B17 did not exist to begin with.
\footnotetext{There is also a conjugacy class $\nu\in\Out(V)\leq\O(L)_{\{\Delta\}}$ with Frame shape $2^8$ and $\det(L^\nu)/\det(L)=1$. Its standard lift $\hat\nu\in\Aut(V_L)$ lifts to an automorphism $g$ of $V$ of order~$2$ and type~$0$ whose corresponding (correct) \svoa{} is the one with commutant of type~N (see \autoref{table:genera}).}

This shows that the potential edge with Heisenberg commutant of $U$ having representation category $\mathcal{C}(2_{\II}^{+6}4_{\II}^{+4})$ and central charge~$16$ cannot exist.
\end{proof}

In \autoref{table:orbifoldB}, to illustrate the last argument in the proof of \autoref{thm:commconway}, we list all the $\Z_2$-orbifold constructions of type~III incident with the Schellekens \voa{}s with Heisenberg commutant of type~B. Here, we already omitted the spurious edges, which are removed in \autoref{sec:extcomp} below. Analogous tables could be drawn for all families admitting edges of type~III (cf.\ \autoref{table:genera}).
\begin{table}[ht]\caption{Edges of glueing type~III incident with Schellekens \voa{}s of type~B.}
\setlength{\tabcolsep}{3pt}
\begin{tabular}{r|l|l||c||c|c|c|c||c|c|c|c||c}
\multicolumn{3}{l||}{Vert.\ op.\ alg. $V$} & \multicolumn{5}{l||}{Conj.\ cl.\ $\nu$ of ord.\ 2} & \multicolumn{5}{l}{Associated lattice of $V^g$}\\\hline
No. & Nm. & $\Out(V)$ & No. & (1) & (2) & (3) & (4) & $\mathrm{D}_\mathrm{III}1$ & $\mathrm{D}_\mathrm{III}2$ & $\mathrm{D}_\mathrm{III}3$ & $\mathrm{D}_\mathrm{III}4$ & $\mathrm{N}_\mathrm{III}$ \\\hline\hline
62 & B1  & $1$                            & 0 & 0 & 0 & 0 & 0 &   &   &   &   &  \\
56 & B2  & $1$                            & 0 & 0 & 0 & 0 & 0 &   &   &   &   &  \\
52 & B3  & $\Z_2$                         & 1 & 1 & 0 & 0 & 0 & 2 &   &   &   &  \\
53 & B4  & $1$                            & 0 & 0 & 0 & 0 & 0 &   &   &   &   &  \\
50 & B5  & $\Z_2$                         & 1 & 1 & 0 & 0 & 0 &   & 2 &   &   &  \\
47 & B6  & $\Z_2$                         & 1 & 1 & 0 & 0 & 0 &   &   & 2 &   &  \\
48 & B7  & $\Z_2$                         & 1 & 0 & 1 & 0 & 0 &   &   &   &   &  \\
44 & B8  & $\Z_2$                         & 1 & 1 & 0 & 0 & 0 & 6 &   &   &   &  \\
40 & B9  & $\Z_2$                         & 1 & 0 & 1 & 0 & 0 &   &   &   &   &  \\
39 & B10 & $\Z_2$                         & 1 & 1 & 0 & 0 & 0 &   & 6 &   &   &  \\
38 & B11 & $S_4$                          & 2 & 1 & 0 & 1 & 0 & 3 &   &   &   &  \\
33 & B12 & $\Z_2^2$                       & 3 & 2 & 1 & 0 & 0 & 7 &   &   & 2 &  \\
31 & B13 & $D_8$                          & 3 & 2 & 1 & 0 & 0 &   & 3 & 6 &   &  \\
26 & B14 & $D_8$                          & 3 & 1 & 2 & 0 & 0 &   &   &   & 5 &  \\
25 & B15 & $\Z_2\times S_4$               & 5 & 3 & 1 & 1 & 0 & 3 & 7 & 3 &   &  \\
16 & B16 & $W(D_4)$                       & 6 & 3 & 2 & 1 & 0 &   & 3 & 7 & 3 &  \\
5  & B17 & $\mathrm{AGL}_4(\mathbb{F}_2)$ & 4 & 1 & 1 & 1 & 1 &   &   & 3 &   & 1
\end{tabular}
\label{table:orbifoldB}
\end{table}

The isomorphism types of the outer automorphism groups are listed in \cite{BLS23}. There are four types of automorphisms $\nu$ of order~$2$ occurring in $\Out(V)\leq\O(L)_{\{\Delta\}}$, which we distinguish by the Frame shape and the quotient $\det(L^\nu)/\det(L)$:
\begin{enumerate}
\item $1^82^4$, $4$,
\item $1^42^6$ (order doubling), $4$,
\item $2^8$, $4$,
\item $2^8$, $1$.
\end{enumerate}
Exactly the automorphisms $\nu\in\Out(V)$ under (1) and (4) lift to automorphisms $g$ of $V$ yielding edges of glueing type~III. Each conjugacy class in $\Out(V)$ corresponds to a different isomorphism class of associated lattice of $V^g$ and typically lifts to several non-conjugate automorphisms in $g\in\Aut(V)$, whose numbers we record in the last five columns.


\subsection{Step 3: Simple-Current Extensions}\label{sec:extcomp}

In the following, we use the extension results in \autoref{sec:evensub} to determine the precise number of nice \voa{}s $U$ of central charge $24$ with $\Rep(U)\cong\mathcal{C}(2_{\II}^{+2})$. In particular, for type~III we remove the spurious cases listed in \autoref{sec:orbcomp} and show that each remaining case (characterised by a quintuple of invariants) corresponds to exactly one \voa{} $U$, i.e.\ to exactly one self-dual \svoa{} (with the exception of the two edges of types P and Q that we shall treat in \autoref{sec:unique}). The latter is also done for the one edge of type~I that could potentially still split up. All other edges of type I, IIa and IIb are already fully classified.

The approach is to count, following \autoref{sec:enumsvoa}, the inequivalent simple-current extensions of $W\otimes V_K$ to \voa{}s $U$ with $\Rep(U)\cong\mathcal{C}(2_{\II}^{+2})$ where $W\cong V_{\Lambda_\mu}^{\hat\mu}$ is as in \autoref{thm:commconway} and $K$ is a lattice in the corresponding genus.

As input of these computations, like in \autoref{sec:orbcomp}, we need the characters and automorphism groups of the \voa{}s $W$, noting again that for the lattice \voa{}s $V_K$ these are well understood \cite{DN99,HM22}. As types A to K were treated in \autoref{sec:orbcomp}, the only new data are required for $W$ of types M to R, and hence only for glueing type~III.

For types M and S, $W$ is simply the \fpvosa{} of the lattice \voa{} associated with the Barnes-Wall and Leech lattice, respectively, under the $(-1)$-involution and the characters are given in \cite{Shi04,FLM88}. For the remaining types N to R, we compute the characters explicitly (as explained in \autoref{sec:orbcomp}), or at least those Fourier coefficients that are relevant in the following.

The automorphism groups $\Aut(W)$ for types M and S are given in \cite{Shi04} and for type~N in \cite{CL21}. For types O and R, Ching Hung Lam and Hiroki Shimakura communicated the shape of $\Aut(W)$ to us, along with some partial information for types P and Q. The remaining ambiguities from not knowing the precise automorphism groups for types P and Q shall be resolved in \autoref{sec:unique} using another approach.

\medskip

With the data summarised above, we proceed as explained in \autoref{sec:enumsvoa}:
\begin{enumerate}
\item We show that the two non-conjugate inner automorphisms $g$ of type~I of the Schellekens \voa{} $V$ with $V_1\cong A_{1,1}^2D_{6,5}$ and $V^g_1\cong A_{1,1}A_{5,5}\C^2$ have isomorphic \fpvosa{}s $V^g$ and thus only correspond to one self-dual \svoa{} (of type~(2) in \autoref{prop:order2conj}).
\item We prove that none of the $18$ spurious cases (occurring for types D, G and~O) in \autoref{sec:orbcomp} exist.
\item We show that, with the exception of the two cases of types P and Q, each case of glueing type~III from \autoref{sec:orbcomp}, characterised by a quintuple of invariants, corresponds to just one self-dual \svoa{}.
\end{enumerate}

We have hence almost completed the classification of the nice, self-dual \svoa{}s of central charge~$24$, with the two remaining cases being discussed in the next section.


\subsection{Step 4: Remaining Uniqueness}\label{sec:unique}

In the following, we complete the classification proof for the nice, self-dual \svoa{}s of central charge~$24$ by studying the two cases of types P and Q and glueing type~III (see \autoref{table:genera}). In each of the two cases, all possible extensions $U$ of $W\otimes V_K$ with $\Rep(U)\cong\mathcal{C}(2_{\II}^{+2})$ have the same invariants and we show that there is indeed just one such extension~$U$ up to isomorphism.

We consider type~Q and note that type~P can be handled in a similar fashion. By the preliminary classification results in \autoref{sec:orbcomp}, the \voa{} $U$ in question must be a \fpvosa{} $U\cong V^g$ of the Schellekens \voa{} $V$ with $V_1\cong A_{4,5}^2$ under an automorphism of order~$2$ such that $V^g_1\cong A_{4,10}$. We shall show that there is only one such automorphism in $\Aut(V)$ up to conjugacy, thus proving the uniqueness of $U$.

We proceed analogously to \cite{EMS20b}. Suppose that $g$, $\tilde{g}\in\Aut(V)$ have the same fixed-point Lie subalgebra $A_{4,10}$. The restrictions $r(g)$ and $r(\tilde{g})$ to $V_1$ are conjugate in $\Aut(V_1)$. Indeed, both must project to the outer automorphism of $V_1$ permuting the two simple ideals so that they are conjugate under an inner automorphism by Lemma~8.1 in \cite{EMS20b}. By definition, this inner automorphism extends to an inner automorphism of $V$, say $k\in K$, so that $r(kg^{-1}k^{-1}\tilde{g})=\id$. But, $\ker(r)=\{\id\}$ for this Schellekens \voa{} $V$ (see \cite{BLS23}) so that $kg^{-1}k^{-1}\tilde{g}=\id$.

Hence, there is exactly one nice, self-dual \svoa{} of type~Q. With the analogous result for type~P this finally concludes the classification proof, leaving unsolved only the question of possible fake copies of the type-T edge (see \autoref{rem:fakeshort} below).


\subsection{Classification Results}\label{sec:results}

Collecting the results from \autoref{sec:orbcomp} to \autoref{sec:unique}, we state the main classification theorem. Recall that we assume that \svoa{}s are not purely even.
\begin{thm}[Classification]\label{thm:class}
Up to isomorphism, there are exactly 968 nice, self-dual \svoa{}s of central charge~24 for which the weight\nobreakdash-1 space or that of the canonically twisted module is non-zero.

In addition, there is the nice, self-dual \svoa{} $\VB\otimes F$ of central charge 24 obtained by tensoring the shorter moonshine module with a free fermion.
\end{thm}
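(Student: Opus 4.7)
The plan is to invoke \autoref{prop:neighbourhood}, which reduces the classification of nice, self-dual \svoa{}s of central charge $24$ to the enumeration of edges in the $2$-neighbourhood graph of nice, self-dual \voa{}s of central charge $24$. Under the assumption that $V^\natural$ is the unique such \voa{} with $V_1 = \{0\}$ (and ignoring potential fake copies of $V^\natural$, which would independently force potential fake copies of $\VB\otimes F$), the set of nodes consists of the $70$ Schellekens \voa{}s with $V_1 \neq \{0\}$ together with $V^\natural$. The enumeration then proceeds in four steps along the lines of \autoref{sec:orbcomp}--\autoref{sec:unique}.

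First, I would enumerate all edges arising from inner automorphisms, i.e.\ edges of glueing types I, IIa, IIb. By \autoref{prop:classesinner} and \autoref{prop:order2conj}, these are parametrised by $\O(L)_V$-orbits on $\{h \in \tfrac{1}{2}L \setminus L \,|\, \langle h,h\rangle/2 \in \tfrac{1}{2}\Z\}/L$, where $L = L_V$ is the associated lattice of the Schellekens \voa{} $V$. Using \cite{BLS22} to make $\O(L)_V$ explicit, one computes these orbits class by class. The \svoa{} invariants (in particular $V^g_1$ and the associated lattice of $V^g$) are then directly readable off the formulas in \autoref{sec:orbinner} for types I and IIa; for type IIb, $V^{\orb(g)}_1$ is determined by exclusion against Schellekens' list together with the rank constraint derived from $C_{V^{\orb(g)}_1}(\hh)$ and the classification of fixed-point Lie subalgebras in \cite{Kac90}. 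All but one pair of non-conjugate inner automorphisms have clearly non-isomorphic \fpvosa{}s; the single remaining pair (inside the \voa{} with $V_1 \cong A_{1,1}^2 D_{6,5}$) will be resolved in Step 3.

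Next, for edges of type III, I would enumerate all standard lifts $\hat\nu \in \Aut(V_L)$ of order $2$ (\autoref{prop:classesstandard}) arising from $\nu \in \Out(V) \leq \O(L)_{\{\Delta\}}$ that do not exhibit order doubling, and for each such $\hat\nu$, generate a list of potential quintuples $(V_1, V^g_1, V^{\orb(g)}_1, \nu, \mu)$ satisfying constraints (1)--(13) of \autoref{sec:orbcomp}: compatibility with $\Phi^g_1$, the fixed-point classification of \cite{Kac90}, matching of associated lattices $L^\nu \cong \tilde L^\mu$, the dimension formula $l \in \N$, and \autoref{prop:suffloop}. I would also include the two known outer edges from $V^\natural$ (2A giving $\VB \otimes F$, 2B giving $\VO$). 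This yields an upper bound with finitely many spurious candidates.

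The main obstacle is eliminating the spurious type-III candidates and proving sharp uniqueness. Here I would implement Step 2 by first establishing \autoref{thm:commconway}: for every non-spurious candidate the Heisenberg commutant $W = \Com_U(V_K)$ must be isomorphic to $V_{\Lambda_\mu}^{\hat\mu}$ for one of $18$ conjugacy classes $\mu \in \Co_0$ (types A--K, M--S). The argument matches the central charge and pointed representation category of $W$ against the candidates $V_{\Lambda_\mu}^{\hat\mu}$ of \cite{Lam20}, while ruling out the one residual representation category $\mathcal{C}(2_\II^{+6}4_\II^{+4})$ (type $\Xi$) by restricting to the Schellekens \voa{} $V$ with $V_1 \cong A_{1,2}^{16}$ and observing that the corresponding lattice automorphism would necessarily produce further type-III edges on lattices in genus $\gBB$ attached to the \voa{}s B11, B15, B16, which are already excluded. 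With \autoref{thm:commconway} in hand, I would then carry out Step 3: for each genus and each $\mu$, enumerate the inequivalent simple-current extensions $U$ of $W \otimes V_K$ with $\Rep(U) \cong \mathcal{C}(2_\II^{+2})$ by classifying anti-isometries $\tau\colon A \to A'$ between subgroups $A \leq A_K$, $A' \leq A_W$ of suitable index up to the action of $\overline{\Aut}(W) \times \overline{\O}(K)$, taken from \cite{BLS21,BLS22,Shi04,CL21} and (for types P, Q, O, R) supplementary data on $\Aut(V_{\Lambda_\mu}^{\hat\mu})$. Comparing this count to the Step 1 list identifies and removes the $18$ spurious candidates and confirms that the unresolved pair from Step 1 yields a single isomorphism class. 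Finally, in Step 4, for types P and Q where the data on $\Aut(W)$ is incomplete, I would use the approach of \cite{EMS20b}: given $g, \tilde g \in \Aut(V)$ with isomorphic fixed-point Lie algebras, the restrictions $r(g), r(\tilde g)$ are conjugate by an inner automorphism by Lemma 8.1 of \cite{EMS20b}, and injectivity of $r$ on $\Aut(V)$ for the relevant Schellekens \voa{}s (from \cite{BLS22}) forces $g$ and $\tilde g$ to be conjugate, settling uniqueness. Summing the contributions of types I, IIa, IIb, III across all $18$ Heisenberg commutants A--S then yields exactly $968$ \svoa{}s, and the \svoa{} $\VB \otimes F$ of type T provides the remaining entry.
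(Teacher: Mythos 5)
Your proposal follows essentially the same route as the paper: the same four-step structure (enumeration of inner-automorphism orbifolds for types I/IIa/IIb via \autoref{prop:classesinner} and \autoref{prop:order2conj}, an upper bound for type~III from restrictions $\hat\nu$ on $V_L$ plus the constraints of \autoref{sec:orbcomp}, the Heisenberg-commutant theorem with the same elimination of the residual category $\mathcal{C}(2_{\II}^{+6}4_{\II}^{+4})$ through the B17/B11/B15/B16 argument, the count of simple-current extensions of $V_{\Lambda_\mu}^{\hat\mu}\otimes V_K$ to remove spurious edges and settle the $A_{1,1}^2D_{6,5}$ ambiguity, and the \cite{EMS20b}-style conjugacy argument for types P and Q). The argument is correct and matches the paper's proof in both strategy and detail.
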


\begin{rem}\label{rem:fakeshort}
The \svoa{} $\VB\otimes F$, with even part $(V^\natural)^\mathrm{2A}$, is nice, self-dual and of central charge~$24$, but its weight-$1$ space and that of the canonically twisted module vanish.

We cannot rule out that there are further (or \emph{fake}) nice, self-dual \svoa{}s of central charge $24$ with these properties. They are similar to ${\VB\otimes F}$ in the following sense: there must be a nice, self-dual \voa{}~$V$ of central charge $24$ with $V_1=\{0\}$ (i.e.\ a fake copy of the moonshine module) and an automorphism of order~$2$ and type~$0$ of $V$ with $V^g$ satisfying the positivity condition such that also $V^{\orb(g)}_1=\{0\}$. Then, $V^g$ extends to a self-dual \svoa{} with said property.

The vector-valued character of $V^g$ is identical to that of $(V^\natural)^\mathrm{2A}$ following \autoref{sec:chars}, which implies that the self-dual \svoa{} splits exactly one free fermion and hence that $V^{\orb(g)}\cong V$ (see \autoref{prop:suffloop}). In fact, $g$ must be a Miyamoto involution of $V\cong V^{\orb(g)}$ associated with some Ising vector (see \autoref{sec:graphmeth}), like the 2A-involutions of $V^\natural$ \cite{Miy96}.
\end{rem}

We conjecture that such fake copies of $\VB\otimes F$ or equivalently of $(V^\natural)^\mathrm{2A}$ do not exist, which would be a consequence of the moonshine uniqueness conjecture in \cite{FLM88}. This is equivalent to the conjecture in \cite{Hoe95} that there are no \emph{fake} copies of the shorter moonshine module $\VB$ in the sense that this is the unique nice, self-dual \svoa{} of central charge $23\shs\sfrac{1}{2}$ with vanishing weight-$\sfrac{1}{2}$ and weight-$1$ spaces.

\medskip

In \autoref{table:genera} we list some data associated with the $969$ self-dual \svoa{}s of central charge~$24$, in particular the genus of the Heisenberg commutant and of the associated lattice of the even part, as well as the number of non-isomorphic lattices in the genus. We also list the genus of the Heisenberg commutant of the \voa{} neighbours. Then we state the rank of the associated lattice of one neighbour, of the even part and of the other neighbour. In the last two columns we state the precise number of \svoa{}s with these properties, grouped into non-loops and loops.

\begin{table}[p]\caption{Genera associated with nice, self-dual \svoa{}s of central charge $24$.}
\setlength{\tabcolsep}{2.4pt}
\renewcommand{\arraystretch}{1.025}
\begin{tabular}{c|l|r|l||l|r||l|l|r|r}
\multicolumn{10}{c}{Type I} \\\hline
\multicolumn{2}{l|}{Commutant}          & $c$ & $\O(\Lambda)$  & Lattice genus                    & No. & Nb.\     & Ranks      & NL   & L   \\\hline
A & $\mathcal{C}(1)$                    &   0 & $1^{24}$       & $\II_{24,0}(2_{\II}^{+2})$       & 273 & A/A      & $24,24,24$ & $122$ & $151$ \\
B & $\mathcal{C}(2_{\II}^{+10})$        &   8 & $1^82^8$       & $\II_{16,0}(2_{\II}^{+12})$      &  11 & B/B      & $16,16,16$ &  $71$ & $103$ \\
C & $\mathcal{C}(3^{-8})$               &  12 & $1^63^6$       & $\II_{12,0}(2_{\II}^{+2}3^{-8})$ &  33 & C/C      & $12,12,12$ &   $8$ &  $25$ \\
E & $\mathcal{C}(2_6^{+2}4_{\II}^{-6})$ &  14 & $1^42^24^4$    & $\II_{10,0}(2_2^{+4}4^{+6})$     &   3 & E/E      & $10,10,10$ &   $4$ &  $10$ \\
F & $\mathcal{C}(5^{+6})$               &  16 & $1^45^4$       & $\II_{8,0}(2_{\II}^{+2}5^{+6})$  &   6 & F/F      & $8,8,8$    &   $1$ &   $5$ \\
G & $\mathcal{C}(2_{\II}^{+6}3^{-6})$   &  16 & $1^22^23^26^2$ & $\II_{8,0}(2_{\II}^{+8}3^{-6})$  &   1 & G/G      & $8,8,8$    &   $1$ &   $4$ \\
H & $\mathcal{C}(7^{+5})$               &  18 & $1^37^3$       & $\II_{6,0}(2_{\II}^{+2}7^{-5})$  &   1 & H/H      & $6,6,6$    &   $0$ &   $1$ \\
\multicolumn{10}{c}{}\\[-2mm]
\multicolumn{10}{c}{Type IIa} \\\hline
\multicolumn{2}{l|}{Commutant} & $c$ & $\O(\Lambda)$ & Lattice genus & No. & Nb.\ & Ranks & NL & L \\\hline
B & $\mathcal{C}(2_{\II}^{+10})$                  &  8 & $1^82^8$       & $\II_{16,0}(2_{\II}^{+10})$                 & 17 & B/B & $16,16,16$ & $0$ & $69$ \\
D & $\mathcal{C}(2_{\II}^{-10}4_{\II}^{-2})$      & 12 & $2^{12}$       & $\II_{12,0}(2_{\II}^{-10}4_{\II}^{-2})$     &  2 & D/D & $12,12,12$ & $9$ & $52$ \\
E & $\mathcal{C}(2_6^{+2}4_{\II}^{-6})$           & 14 & $1^42^24^4$    & $\II_{10,0}(2_2^{+2}4_{\II}^{-6})$          &  5 & E/E & $10,10,10$ & $0$ & $20$ \\
G & $\mathcal{C}(2_{\II}^{+6}3^{-6})$             & 16 & $1^22^23^26^2$ & $\II_{8,0}(2_{\II}^{+6}3^{-6})$             &  2 & G/G & $8,8,8$    & $0$ &  $6$ \\
I & $\mathcal{C}(2_1^{+1}4_5^{-1}8_{\II}^{-4})$   & 18 & $1^22^14^18^2$ & $\II_{6,0}(2_3^{-1}4_3^{-1}8_{\II}^{-4})$   &  1 & I/I & $6,6,6$    & $0$ &  $3$ \\
J & $\mathcal{C}(2_{\II}^{+4}4_{\II}^{-2}3^{-5})$ & 18 & $2^36^3$       & $\II_{6,0}(2_{\II}^{+4}4_{\II}^{-2}3^{+5})$ &  1 & J/J & $6,6,6$    & $0$ &  $6$ \\
K & $\mathcal{C}(2_{\II}^{-2}4_{\II}^{-2}5^{+4})$ & 20 & $2^210^2$      & $\II_{4,0}(2_{\II}^{-2}4_{\II}^{-2}5^{+4})$ &  1 & K/K & $4,4,4$    & $0$ &  $2$ \\
\multicolumn{10}{c}{}\\[-2mm]
\multicolumn{10}{c}{Type IIb} \\\hline
\multicolumn{2}{l|}{Commutant} & $c$ & $\O(\Lambda)$ & Lattice genus & No. & Nb.\ & Ranks & NL & L \\\hline\hline
B & $\mathcal{C}(2_{\II}^{+10})$                  &  8 & $1^82^8$       & $\II_{16,0}(2_{\II}^{+10})$                 & 17 & A/B & $24,16,16$ & $76$ & $0$ \\\hline
\multirow{2}{*}{D} & \multirow{2}{*}{$\mathcal{C}(2_{\II}^{-10}4_{\II}^{-2})$} & \multirow{2}{*}{12} & \multirow{2}{*}{$2^{12}$} & \multirow{2}{*}{$\II_{12,0}(2_{\II}^{-10}4_{\II}^{-2})$} & \multirow{2}{*}{2} & A/D & $24,12,12$ & $15$ & $0$ \\
                                                                                                                      &&&&&& B/D & $16,12,12$ & $54$ & $0$ \\\hline
E & $\mathcal{C}(2_6^{+2}4_{\II}^{-6})$           & 14 & $1^42^24^4$    & $\II_{10,0}(2_2^{+2}4_{\II}^{-6})$          &  5 & B/E & $16,10,10$ & $15$ & $0$ \\\hline
G & $\mathcal{C}(2_{\II}^{+6}3^{-6})$             & 16 & $1^22^23^26^2$ & $\II_{8,0}(2_{\II}^{+6}3^{-6})$             &  2 & C/G & $12,8,8$   &  $4$ & $0$ \\\hline
I & $\mathcal{C}(2_1^{+1}4_5^{-1}8_{\II}^{-4})$   & 18 & $1^22^14^18^2$ & $\II_{6,0}(2_3^{-1}4_3^{-1}8_{\II}^{-4})$   &  1 & E/I & $10,6,6$   &  $2$ & $0$ \\\hline
\multirow{2}{*}{J} & \multirow{2}{*}{$\mathcal{C}(2_{\II}^{+4}4_{\II}^{-2}3^{-5})$} & \multirow{2}{*}{18} & \multirow{2}{*}{$2^36^3$} & \multirow{2}{*}{$\II_{6,0}(2_{\II}^{+4}4_{\II}^{-2}3^{+5})$} & \multirow{2}{*}{1} & C/J & $12,6,6$ & $2$ & $0$ \\
                                                                                                                      &&&&&& G/J & $8,6,6$    &  $2$ & $0$ \\\hline
K & $\mathcal{C}(2_{\II}^{-2}4_{\II}^{-2}5^{+4})$ & 20 & $2^210^2$      & $\II_{4,0}(2_{\II}^{-2}4_{\II}^{-2}5^{+4})$ &  1 & F/K & $8,4,4$    &  $1$ & $0$ \\
\multicolumn{10}{c}{}\\[-2mm]
\multicolumn{10}{c}{Type III} \\\hline
\multicolumn{2}{l|}{Commutant} & $c$ & $\O(\Lambda)$ / $M$ & Lattice genus & No. & Nb.\ & Ranks & NL & L \\\hline\hline
B & $\mathcal{C}(2_{\II}^{+10})$ & 8 &  $1^82^8$ & $\II_{16,0}(2_{\II}^{+8})$ & 24 & A/A & $24,16,24$ & $0$ & $24$ \\\hline
D & $\mathcal{C}(2_{\II}^{-10}4_{\II}^{-2})$ & 12 & $2^{12}$ & $\II_{12,0}(2_{\II}^{-8}4_{\II}^{-2})$ & 4 & B/B & $16,12,16$ & $17$ & $39$ \\\hline
G & $\mathcal{C}(2_{\II}^{+6}3^{-6})$ & 16 & $1^22^23^26^2$ & $\II_{8,0}(2_{\II}^{+4}3^{-6})$ & 3 & C/C & $12,8,12$ & $0$ & $3$ \\\hline
J & $\mathcal{C}(2_{\II}^{+4}4_{\II}^{-2}3^{-5})$ & 18 & $2^36^3$ & $\II_{6,0}(2_{\II}^{+2}4_{\II}^{-2}3^{+5})$ & 1 & G/G & $8,6,8$ & $0$ & $2$ \\\hline\hline
M & $\mathcal{C}(2_{\II}^{+10})$ & 16 & $1^{-8}2^{16}$ & $\II_{8,0}(2_{\II}^{+8})$ & 1 & A/A & $24,8,24$ & $0$ & $1$ \\\hline
N & $\mathcal{C}(2_{\II}^{+8}4_{\II}^{+2})$ & 16 & $1^82^{-8}4^8$ & $\II_{8,0}(2_{\II}^{+6}4_{\II}^{+2})$ & 1 & B/B & $16,8,16$ & $0$ & $1$ \\\hline
\multirow{3}{*}{O} & \multirow{3}{*}{$\mathcal{C}(2_{\II}^{+4}4_0^{+6})$} & \multirow{3}{*}{16} & \multirow{3}{*}{$2^44^4$} & \multirow{3}{*}{$\II_{8,0}(2_{\II}^{+2}4_0^{+6})$} & \multirow{3}{*}{1} & D/E & $12,8,10$ & $10$ & $0$ \\
&&&&&& D/D & $12,8,12$ & $3$ & $11$ \\
&&&&&& E/E & $10,8,10$ & $1$ & $7$ \\\hline
P & $\mathcal{C}(2_{\II}^{-8}3^{-3})$ & 18 & $1^42^13^{-4}6^5$ & $\II_{6,0}(2_{\II}^{-6}3^{+3})$ & 1 & C/C & $12,6,12$ & $0$ & $1$ \\\hline
Q & $\mathcal{C}(2_{\II}^{-6}5^{-3})$ & 20 & $1^22^15^{-2}10^3$ & $\II_{4,0}(2_{\II}^{-4}5^{-3})$ & 1 & F/F & $8,4,8$ & $0$ & $1$ \\\hline
R & $\mathcal{C}(2_{\II}^{+2}4_4^{+4}3^{+4})$ & 20 & $2^14^16^112^1$ & $\II_{4,0}(4_4^{+4}3^{+4})$ & 1 & J/J & $6,4,6$ & $0$ & $2$ \\\hline
S & $\mathcal{C}(2_{\II}^{+2})$ & 24 & $1^{-24}2^{24}$/2B & $\II_{0,0}(1)$ & 1 & A/L & $24,0,0$ & $1$ & $0$ \\\hline
T & $\mathcal{C}(2_{\II}^{+2})$ & 24 & 2A & $\II_{0,0}(1)$ & 1 & L/L & $0,0,0$ & $0$ & $1$
\end{tabular}
\label{table:genera}
\end{table}

\phantomsection\label{text:expl}Finally, in \autoref{table:969} we list all $969$ nice, self-dual \svoa{}s $V$ of central charge $24$, ignoring potential fake copies of the shorter moonshine module. Here, we include further information such as the weight-$1$ Lie algebra $V_1$ together with its dimension, the central charge $c_\text{st}=c-\sfrac{l}{2}$ of the stump $\bar{V}$, the vector-valued character (expressed in terms of the constants $a$, $b$ and $l$, see \autoref{sec:chars}) and the two \voa{} neighbours labelled as in \cite{Sch93,Hoe17}.

The weight-$1$ Lie algebra $V_1$ decomposes into a direct sum of the contribution from the stump $\bar{V}$ and from the free fermions $F^l$. The latter is always isomorphic to $\so_l$ (with special cases for $l\leq2$, see \autoref{sec:split}) and in the table we place it in brackets.

We explain the naming convention (e.g., $\mathrm{B}_\mathrm{IIa}3\mathrm{f}$), which essentially follows the nomenclature in \cite{Hoe17} in the bosonic case. The first letter stands for the type of the Heisenberg commutant (see \autoref{table:19}), with the glueing type (I, IIa, IIb or III) as a subscript. The number that follows enumerates the different associated lattices, whose genera are listed in \autoref{table:genera}. The last letter indexes the inequivalent extensions of a given dual pair. We drop the last or penultimate symbol if there is only one possibility.

\medskip

As the complete $2$-neighbourhood graph in central charge $24$ (cf.\ \autoref{fig:small}) is too unwieldy to be pictured adequately, we shall only present the coarser graph obtained by projecting each \voa{} to its Heisenberg commutant (see \autoref{fig:course}).

\begin{figure}[ht]
\caption{The Heisenberg commutant $2$-neighbourhood graph for the nice, self-dual \voa{}s of central charge $24$.}
~\\
\begin{tikzcd}[cells={nodes={circle,minimum width=6mm,inner sep=0pt,draw,align=center}}]
&&\text{B}\arrow[-]{dr}[swap]{\text{E}_\text{IIb}\!\!}\arrow[-]{dd}{\text{D}_\text{IIb}}\arrow[-,loop,in=60,out=120,looseness=8]{}{\text{B}_\text{I},\text{B}_\text{IIa},\text{D}_\text{III},\text{N}_\text{III}}\\
\text{L}\arrow[-]{r}{\text{S}_\text{III}}\arrow[-,loop,in=60,out=120,looseness=8]{}{\text{T}_\text{III}}&\text{A}\arrow[-]{ur}[swap]{\!\!\text{B}_\text{IIb}}\arrow[-]{dr}{\!\!\text{D}_\text{IIb}}\arrow[-,loop,in=60,out=120,looseness=8]{}{\text{A}_\text{I},\text{B}_\text{III},\text{M}_\text{III}}&&\text{E}\arrow[-]{r}{\text{I}_\text{IIb}}\arrow[-,loop,in=60,out=120,looseness=8]{}{\text{E}_\text{I},\text{E}_\text{IIa},\text{O}_\text{III}}&\text{I}\arrow[-,loop,in=60,out=120,looseness=8]{}{\text{I}_\text{IIa}}\\
\text{C}\arrow[-]{dr}{\!\!\text{J}_\text{IIb}}\arrow[-]{d}{\text{G}_\text{IIb}}\arrow[-,loop,in=-30,out=30,looseness=8]{}{\text{C}_\text{I},\text{G}_\text{III},\text{P}_\text{III}}&&\text{D}\arrow[-]{ur}{\text{O}_\text{III}\!\!}\arrow[-,loop,in=240,out=300,looseness=8]{}{\text{D}_\text{IIa},\text{O}_\text{III}}&&\text{H}\arrow[-,loop,in=150,out=210,looseness=8]{}{\text{H}_\text{I}}\\
\text{G}\arrow[-]{r}{\text{J}_\text{IIb}}\arrow[-,loop,in=240,out=300,looseness=8]{}{\text{G}_\text{I},\text{G}_\text{IIa},\text{J}_\text{III}}&\text{J}\arrow[-,loop,in=240,out=300,looseness=8]{}{\text{J}_\text{IIa},\text{R}_\text{III}}&&\text{F}\arrow[-]{r}{\text{K}_\text{IIb}}\arrow[-,loop,in=240,out=300,looseness=8]{}{\text{F}_\text{I},\text{Q}_\text{III}}&\text{K}\arrow[-,loop,in=240,out=300,looseness=8]{}{\text{K}_\text{IIa}}
\end{tikzcd}
\label{fig:course}
\end{figure}
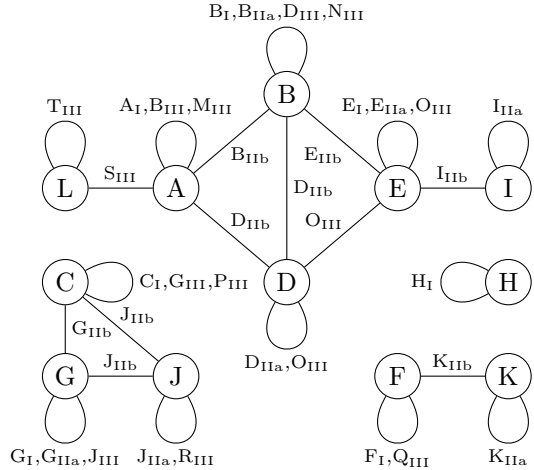

This graph has four connected components, which implies that the actual \voa{} $2$-neighbourhood graph has at least as many. (We again ignore fake copies of the edge of type $\text{T}_\text{III}$ labelled by $\VB\otimes F$ or equivalently by $(V^\natural)^\mathrm{2A}$, all of which are loops.) In fact, we also verify that the \voa{} $2$-neighbourhood graph has precisely these four connected components.

This should be contrasted to the case of positive-definite, even, unimodular lattices of a fixed rank, whose $2$-neighbourhood graph is always connected \cite{Kne57}. That the \voa{} $2$-neighbourhood graph is disconnected was already observed in \cite{Mon98}.

\medskip

We recall from \autoref{prop:split} and \autoref{prop:holsplit} that each nice, self-dual \svoa{} $V$ of central charge $24$ sits inside an infinite family of nice, self-dual vertex operator (super)algebras
\begin{equation*}
\bar{V},\ \bar{V}\otimes F,\ \bar{V}\otimes F^{\otimes2},\dots,\ V\cong\bar{V}\otimes F^{\otimes l},\dots
\end{equation*}
of central charges
\begin{equation*}
c_\text{st},\ c_\text{st}+\sfrac{1}{2},\dots,\ 24=c_\text{st}+\sfrac{l}{2},\dots
\end{equation*}
respectively, where $l=\dim(V_{1/2})$ and the stump of $V$ is denoted by $\bar{V}$, which can be a \voa{} rather than a \svoa{}.

This way we immediately obtain a classification of the nice, self-dual \svoa{}s of central charge at most $24$. In \autoref{table:numbers} we list their numbers. We also list the number of stumps, i.e.\ those \svoa{}s that do not split off any free fermions (and hence do not arise from a theory in smaller central charge). Moreover, we also list the number of (purely even) \voa{}s. The numbers in brackets assume the (shorter) moonshine uniqueness conjecture but are rigorous as lower bounds.

For comparison, we also list the number of cases that can be realised as lattice vertex operator (super)algebras, which equals the number of positive-definite, odd or even, unimodular lattices of dimension $d$ equal to $c$. These numbers are taken from \cite{Kin03}.

A classification of the nice, self-dual \svoa{}s of central charge at most $22\shs\sfrac{1}{2}$ was obtained independently in \cite{Ray23} and agrees with our results. We remark, however, that the majority of cases in central charge $24$ (and in particular all the difficult cases of type~III) stems from stumps in central charge $23$, $23\shs\sfrac{1}{2}$ and $24$.

\begin{table}[ht]\caption{The number of nice, self-dual vertex operator (super)algebras of central charge $c$ at most $24$.}
\renewcommand{\arraystretch}{0.965}

}

}

\FloatBarrier


\bibliographystyle{alpha_noseriescomma}
\bibliography{quellen}{}

\end{document}